\theoremstyle{plain}
\newtheorem{thm}{Theorem}
\newtheorem*{ml}{Main Lemma}
\newtheorem{cor}{Corollary}[section]
\newtheorem{lem}{Lemma}[section]
\newtheorem{rmk}{Remark}[section]
\newtheorem{defi}{Definition}[section]
\newtheorem{pro}{Proposition}[section]
\begin{document}

\date{}

\title{On David type Siegel Disks of the Sine family}
\author{Gaofei Zhang}
\address{Department of  Mathematics, Nanjing University, Nanjing,   210093, P. R.
China} \email{zhanggf@nju.edu.cn}
\thanks{}
\subjclass[2000]{58F23, 37F10, 37F45, 32H50, 30D05}
\begin{abstract}
In 2008 Petersen posed a list of questions on the application of
trans-quasiconformal Siegel surgery developed by Zakeri and himself.
In this paper we extend Petersen-Zakeri's idea  so that the surgery
can be applied to all the premodels which have
 no ``free critical points''. We explain how the idea is used in
solving three of the  questions posed by Petersen.  To present the
details of the idea,  we focus  on the solution of one of them: we
prove that for typical rotation numbers $0< \theta < 1$, the
boundary of the Siegel disk of $f_{\theta}(z) = e^{2 \pi i \theta}
\sin (z)$ is a Jordan curve which passes through exactly two
critical points $\pi/2$ and $-\pi/2$.
\end{abstract}

\maketitle

%*****************************************************************************
%*****************************************************************************

\section{Introduction}

The idea of trans-quasiconformal surgery was first used by
Haissinsky to make attracting basins into parabolic basins. It was
then introduced  by Petersen and Zakeri  to the study of Siegel
disks with typical rotation numbers \cite{PZ}.

Let us first sketch how Petersen-Zakeri's trans-quasiconformal
Siegel surgery works.  We say an irrational number $0< \theta < 1$
is of $\emph{David type}$ if $\log{a_{n}} = O(\sqrt n)$,  where
$[a_{1}, a_{2}, \cdots]$ is the continued fraction of $\theta$. It
is known that the set of David type irrational numbers in $[0, 1]$
has full Lebesgue measure. Let $0< \theta < 1$ be a David type
irrational number. Let $\Bbb T$ denote the unit circle. Consider the
degree-3 Blaschke product
$$
G(z) = e^{it} z^{2} \frac{z - 3}{1 - 3z}
$$
where $0< t < 2 \pi$ is chosen such that $G|\Bbb T: \Bbb T \to \Bbb
T$ is a homeomorphism of rotation number $\theta$.  It is known that
$G$ has a double critical point at $1$ and  has no other critical
points on $\Bbb T$.  By Yoccoz's linearization theorem, there is a
homeomorphism $h: \Bbb T \to \Bbb T$  such that $G|\Bbb T = h^{-1}
\circ R_\theta \circ h$ where $R_\theta: z \to e^{2 \pi i \theta} z$
is the rigid rotation given by $\theta$. By Herman's theorem, $h$ is
quasi-symmetric if and only if $\theta$ is of bounded type, that is,
$\sup\{a_n\}< \infty$. Since the set of bounded type irrational
numbers has zero Lebesgue measure, for typical rotation numbers
$\theta$, $h$
 is not quasi-symmetric and thus can not be quasiconformally
 extended
to the unit disk.  With the aid of Yoccoz's cell construction,
Petersen and Zakeri showed that if $\theta$ is of David type, then
$h$ has a trans-quasiconformal extension $H: \Delta \to \Delta$.
Here ``trans-quasiconformal" means that  the map $H: \Delta \to
\Delta$ satisfies the following conditions.
\begin{itemize}\item[1.] The map $H: \Delta \to \Delta$ is a homeomorphism in the Sobolev class ${
W_{\rm loc}^{1, 1}}$, \item[2.]   $H$ has a degenerate Beltrami
coefficient, that is, $\|\mu_H\|_{\infty} = 1$,
\item[3.]   there exist $M, \alpha > 0$ and $0< \epsilon_0 < 1$ such
that for any $0 < \epsilon < \epsilon_0$, the following inequality
holds,
\begin{equation}\label{dgr}
m(\{z \:|\: |\mu_{H}(z)| > 1 - \epsilon \} < M
e^{-\frac{\alpha}{\epsilon}}
\end{equation} where $m(\cdot)$ denotes the area with respect to the Euclidean metric
 on the plane or the area with respect to the spherical metric.
 \end{itemize}  Then as in
the quasiconformal surgery, define the premodel $\widehat{G}$  as
follows.
$$
\widehat{G}(z) =
\begin{cases}

 G(z) & \text{for $z \in \widehat{\Bbb C} \setminus \Delta$}, \\

H^{-1} \circ R_{\theta} \circ H (z) & \text{for $z \in \Delta$}.
\end{cases}
$$
Now  spread $\mu_H$ by the iterated inverse branches of
$\widehat{G}$ to all the drops and get a $\widehat{G}$-invariant
Beltrami differential $\mu$ on ${\Bbb C}$. Note that
$\|\mu\|_{\infty} = \|\mu_{H}\|_{\infty} = 1$. Thus unlike in
quasiconformal surgery, here one can not use measurable Riemann
mapping theorem to get a plane homeomrphism $\phi$ which solves the
Beltrami equation given by $\mu$.  The new idea  of this approach
 is to replace  the measurable Riemann mapping theorem by a theorem of David:  if  there exist constants $M, \alpha > 0$ and $0<
\epsilon_0 < 1$ such that $\mu$ satisfies the exponential decay
condition (\ref{dgr})  on the plane, then there is a plane
homeomorphism $\phi \in W_{\rm loc}^{1, 1}$ which solves the
Beltrami equation given by $\mu$.  Once $\phi$ is obtained,  one can
show that $\phi \circ \widehat{G} \circ \phi^{-1}$ is a quadratic
polynomial with a Jordan Siegel disk of rotation number $\theta$
whose boundary contains the unique finite critical point.  Note that
such a quadratic polynomial must be linearly conjugate to $e^{2 \pi
i \theta} z + z^2$.  So  the proof is reduced to showing that  $\mu$
satisfies  the integrability condition (\ref{dgr}) on the plane.
This is the core part of the Petersen-Zaker's proof. In \cite{PZ},
this step relies essentially on the delicate geometry of a puzzle
construction for the premodel $\widehat{G}$.   These puzzles    were
constructed  by Petersen in \cite{P1} and is now called Petersen
puzzles. In many situations, however, such puzzle construction is
not available. The following questions, which are examples of such
situations, were posed by Petersen in his 2008-lecture notes at
Liverpool University.

Q1. Is it true that for typical rotation numbers, the Siegel disk of
$e^{2 \pi i \theta} \sin(z)$ is a Jordan domain with the boundary
containing the two critical points $\pi/2$ and $-\pi/2$?

Q2. Is it true that for typical rotation numbers $\theta$ and $\tau$
with $\theta + \tau\ne 1$, the  two Siegel disks of the rational map
$$
f_{\theta, \tau}(z) =  \frac{z ^{2} + e^{2 \pi i \theta} z } {1 +
e^{2 \pi i \tau}z}$$ are Jordan domains with each boundary
containing exactly one critical point?

Q3. Is it true that for typical rotation numbers, the Siegel disk of
a cubic polynomial is a Jordan domain with the boundary containing
at least one critical point?

In the case of bounded type rotation numbers, the answers to all the
three questions are affirmative  and are obtained through
quasi-conformal surgery, see \cite{YZ}, \cite{Za1} and \cite{Zh1}.
In order to use Petersen-Zakeri's trans-quasiconformal Siegel
surgery to solve the questions for typical rotation numbers, one has
to overcome the problem that  Petersen's puzzle construction is not
available for all the three cases. The goal of this paper is to
develop a method by which one can verify the integrability condition
(\ref{dgr})  in certain  situations where Petersen puzzles are not
available. Roughly speaking, the method applies to the following
situation: {\sl  The premodel has  no ``free critical points", that
is, the forward orbit of every critical point either interests the
closure of the rotation disk(s), or is eventually periodic, or is
attracted to some attracting or parabolic cycles, or lands at some
essential singularity for a meromorphic function.}

In \cite{CZ}, \cite{Zh3} and the present paper, by using this
method, we solved all the above three questions. In this paper, we
solve Q1 by proving
\begin{thm}\label{thm1}
Let $0< \theta < 1$ be an irrational number of David type. Then the
boundary of the Siegel disk of $f_{\theta}(z) = e^{2 \pi i \theta}
\sin (z)$ is a Jordan curve which passes through exactly two
critical points $\pi/2$ and $-\pi/2$.
\end{thm}

An outline of the proof of Theorem~\ref{thm1} will be given in
$\S2$.

In  \cite{CZ} we proved that for David type  rotation numbers $0<
\theta, \tau < 1$ with $\theta + \tau \ne 1$, the two Siegel
quadratic polynomials $e^{2 \pi i \theta}z + z^2$ and $e^{2 \pi i
\tau}z + z^2$ are conformally matable. In particular, this solves
Q2.
\begin{thm}[Ch\'{e}ritat-Zhang, \cite{CZ}]\label{thm2}
For David type irrational numbers $0< \theta, \tau <1$ with $\theta
+ \tau \ne 1$, the quadratic rational map
$$f_{\theta, \tau}(z) =  \frac{z ^{2} +
e^{2 \pi i \theta} z } {1 + e^{2 \pi i \tau}z}$$ is the conformal
mating of the two Siegel quadratic polynomials $e^{2 \pi i \theta}z
+ z^2$ and $e^{2 \pi i \tau}z + z^2$. In particular, the boundaries
of the two Siegel disks of $f$ are Jordan curves containing a
critical point on each of them.
\end{thm}

In the case that both $\theta$ and $\tau$ are of bounded type,
Theorem~\ref{thm2} had been proved by Yampolsky and Zakeri in
\cite{YZ}. One of the key tools in Yampolsky-Zakeri's proof is  a
degree three Blaschke product $B$ such that \begin{itemize}
\item[1.]  $B$ has an indifferent fixed point at  infinity with
multiplier $e^{2 \pi i \tau}$, and \item[2.]  when restricted to the
unit circle, $B|\Bbb T: \Bbb T\to \Bbb T$ is a circle homeomorphism
with rotation number $\theta$ and with exactly one critical point at
$1$ (which is a double critical point).
\end{itemize} For the construction of such $B$, see $\S4$ of
\cite{YZ}. These models are now called Yampolsky-Zakeri's mating
models. With the aid of these models and quasiconformal surgery,
Yampolsky and Zakeri proved Theorem~\ref{thm2} for bounded type
irrational numbers $0< \theta, \tau< 1$ with $\theta + \tau \ne 1$.

But for  David type rotation numbers, we are not able to perform
Petersen-Zakeri's trans-quasiconformal surgery on Yampolsky-Zakeri's
mating models. The reason can be roughly sketched as follows. For
David type irrational numbers $\tau$,  the Yampolsky-Zakeri's mating
model $B$ has a Siegel disk centered at infinity whose boundary is
contained in the $\omega$-limit set of  a critical point of $B$. A
priori, the boundary of this Siegel disk could be very complicated,
and therefore, the forward orbit of the critical point might be very
wild. For instance, its $\omega$-limit set could even be dense in
the Julia set.  The existence of  such ``free critical point''
causes a big problem  in verifying  the integrability condition
(\ref{dgr}).

 The key idea in \cite{CZ}  is to
construct premodels with no ``free critical points"  so that  the
method developed in this paper can be applied.   The following is a
brief description of this construction.

For $R > 0$, let $\Bbb T_{R}$ denote the Euclidean circle $\{z\:|\:
|z| = R\}$.  Let $\sigma_1$ and $\sigma_R$ denote the reflection
about $\Bbb T$ and $\Bbb T_{R}$, respectively. In \cite{CZ} we prove
that  for any two irrational numbers $0 < \theta, \tau < 1$ with
$\theta + \tau \ne 1$, there exist an $R
> 1$, a  real number $0\le s < 1$ and
 four distinct points $a_{0}, b_{0}, \alpha_{0}, \beta_{0}$ in  the annulus $\{z\:|\:1 < |z|  < R^{2}\}$  with
 $\sigma_R (a_0) = \alpha_0$ and $\sigma_R(b_0) = \beta_0$,
 such that the following
infinite  Blaschke fraction
$$
 B(z) = e^{ 2 \pi i s} z \prod_{k= 0}^{\infty} \bigg{(}\frac{z
- a_{k} } {1 - \overline{a_{k}}  z}  \frac{ 1 - \overline{b_{k}}
z}{z - b_{k}}  \frac{z - \beta_{k}} {1 - \overline{\beta_{k}}  z}
\frac{ 1 - \overline{\alpha_{k}} z}{z - \alpha_{k}} \bigg{)}
$$
 where   $$a_{k}=
R^{2k} \cdot a_{0},\:\: b_{k} = R^{2k} \cdot b_{0},\:\: \alpha_{k}=
R^{2k} \cdot \alpha_{0}  \hbox{  and  } \beta_{k}= R^{2k} \cdot
\beta_{0}\:\:,  \forall k \ge 0,$$  satisfies the following
properties:
\begin{itemize}
\item[1.] $B \circ \sigma_1 = \sigma_1 \circ B$  and  $B \circ
\sigma_R = \sigma_R \circ B$, \item[2.] $B|\Bbb T: \Bbb T \to \Bbb
T$ and $B|\Bbb T_{R}: \Bbb T_{R} \to \Bbb T_{R}$ are both  circle
homeomorphisms with rotation numbers $\theta$ and $1 - \tau$,
respectively,
\item[3.]  $B$ has exactly one critical point on each of $\Bbb T$ and
$\Bbb T_{R}$, say $c_{1}$ and $c_{2}$, and both of them are double
critical points,
\item[4.] there exist two Jordan domains $U$ and $V$ contained in $$H
= \{z\:|\:1 < |z| < R\}$$ which are attached to $c_{1}$ and $c_{2}$
respectively, such that $$B: U \to \{z\:|\:|z| < 1\} \hbox{  and  }
B: V \to \widehat{\Bbb C} \setminus \{z\:|\: |z| \le R\}$$ are both
holomorphic isomorphisms, and $B: H \setminus \overline{U \cup V}
\to H$ is a two-to-one holomorphic covering map.
\end{itemize}

In \cite{CZ} we call such $B$ a holomorphic torus mapping since it
is obtained by iterating Thurston pull back map on the
Teichm\"{u}ller space modeled on a torus with finitely many points
marked.  Recently Cheritat found  several different constructions of
such bi-symmetric Blaschke fractions.

Now we can construct the desired premodel $\widehat{B}$ as follows.
Assume that both $\theta$ and $\tau$ are  David type irrational
numbers.   Let $h_{1}: \Bbb T \to \Bbb T$ and $h_{2}: \Bbb T_{R} \to
\Bbb T_{R}$ be two circle homeomorphisms such that $B|\Bbb T =
h_{1}^{-1} \circ R_{\theta} \circ h_{1}$ and $B|\Bbb T_{R} =
h_{2}^{-1} \circ R_{1-\tau}\circ h_{2}$ where $R_{\theta}$ and
$R_{1- \tau}$ are the rigid rotations given by $\theta$ and $1 -
\tau$, respectively.  Let $\Delta = \{z|\:|z| < 1\}$ and
$\Delta_{R}^{c} = \widehat{\Bbb C} \setminus \{z\: |\:|z| \le R\}$.
By Yoccoz's extension theorem proved in the appendix of \cite{PZ},
there exist David homeomorphisms $H_{1}: \Delta \to \Delta$ and
$H_{2}: \Delta_{R}^{c} \to \Delta_{R}^{c}$ which extend $h_{1}$ and
$h_{2}$, respectively (For the definition of David homeomorphism,
see $\S 3$). Now define the premodel $\widehat{B}$ as follows.
$$
\widehat{B}(z) =
\begin{cases}

 H_{1}^{-1} \circ R_{\theta} \circ H_{1}(z) & \text{for $z \in \Delta$}, \\
 B(z) & \text{for $1 \le |z| \le R$}, \\

H_{2}^{-1} \circ R_{1- \tau} \circ H_{2} (z) & \text{for $z \in
\Delta_{R}^{c}$}.
\end{cases}
$$
Let $\mu_{1}$ and $\mu_{2}$ be the Beltrami differentials of $H_1$
and $H_2$ on $\Delta$ and $\Delta_R^{c}$, respectively. Then spread
$\mu_1$ and $\mu_2$  by the iterated inverse branches of
$\widehat{B}$ to all the drops and get a $\widehat{B}$-invariant
Beltrami differential $\mu$ on $\widehat{\Bbb C}$.   It is clear
that $\widehat{B}$ has two critical points both of which  are
contained in  the boundaries of the two rotation  disks, $\Delta$
and $\Delta_{R}^{c}$.  This means that $\widehat{B}$ has no ``free
critical points".  So we can use the method developed in this paper
to show that $\mu$ satisfies the integrability condition (\ref{dgr})
on the sphere.  By the same argument as in the proof of Lemma 5.5 in
\cite{PZ}, it  follows that $\widehat{B}$ is topologically conjugate
to $f_{\theta, \tau}$. In particular, this implies  that for typical
rotation numbers $\theta$ and $\tau$ with $\theta + \tau \ne 1$, the
boundaries of the two Siegel disks of $f_{\theta, \tau}$ are Jordan
curves containing a critical point on each of them. This solves Q2.
By adapting the arguments in \cite{Ya} and  \cite{YZ}, we can
further prove that $f_{\theta, \tau}$ is the conformal mating of
$P_{\theta}$ and $P_{\tau}$.  This completes the proof of
Theorem~\ref{thm2}.

The answer to Q3 is also affirmative.  In fact we prove that it is
true for polynomial maps of all degrees \cite{Zh3}.
\begin{thm}[Zhang, \cite{Zh3}]\label{thm3}
All David type Siegel disks of polynomial maps are Jordan domains
with at least one critical point on their boundaries.
\end{thm}
The very rough idea of the proof of Therorem~\ref{thm3} is as
follows. Fix an integer $d \ge 3$ and a David type irrational number
$0 < \theta < 1$. Let $\mathcal{P}^{d}_{\theta}$ denote the class of
all the polynomial maps with degree $\le d$ and having a fixed
Siegel disk centered at the origin and with rotation number
$\theta$. Let $\mathcal{Q}^{d}_{\theta} \subset
\mathcal{P}^{d}_{\theta}$ be the subclass consisting of all those
polynomial maps  for which all the finite critical points are
contained in the boundary of the Siegel disk centered at the origin.
Using the method developed in this paper, we are able to show that
each polynomial map in $\mathcal{Q}^{d}_{\theta}$ is obtained by
performing Petersen-Zakeri's trans-quasiconformal Siegel surgery on
certain premodel with no ``free critical points" . These premodels
are produced from Blaschke products of degree $2d-1$ whose critical
points, except $0$ and $\infty$,  are all contained in $\Bbb T$.
Next we consider a family of functions which measure the oscillation
of the boundaries of the Siegel disks for the polynomial maps in
$\mathcal{P}^{d}_{\theta}$.  We  first prove that these functions
are ``maximized'' on the polynomial maps in
$\mathcal{Q}^{d}_{\theta}$. Then we prove that these oscillation
functions are uniformly bounded for all the polynomial maps in
$\mathcal{Q}^{d}_{\theta}$. Thus the ``oscillation'' of the boundary
of the Siegel disk of every polynomial map in
$\mathcal{P}^{d}_{\theta}$ is uniformly bounded. From this we deduce
that for every polynomial map in $\mathcal{P}^{d}_{\theta}$, the
boundary of the Siegel disk centered at the origin is a Jordan curve
passing through at least one critical point. This implies
Theorem~\ref{thm3}.

In this paper we focus on  the proof of Theorem~\ref{thm1}.

To learn more about Siegel disks of entire functions,  the reader
may refer to \cite{BF}, \cite{G}, \cite{Re} and the articles in the
references there.
\section{Outline of the proof of Theorem 1}

 Throughout the
paper, we use $\widehat{\Bbb C}$, $\Bbb C$, $\Bbb C^{*}$, $\Delta$,
and $\Bbb T$ to denote the Riemann sphere, the complex plane, the
complex plane with a puncture at the origin, the open unit disk, and
the unit circle, respectively. The following is an outline of the
proof of Theorem~\ref{thm1}.

In $\S3$, we introduce  $\emph{David homeomorphisms}$ and David's
integrability theorem.

In $\S4$, we present some basic results on the real bounds of
critical circle mappings which will be used in this paper.

In $\S5$, we construct an odd Blaschke fraction $G_{\theta}$ to
serve as the model map for $f_{\theta}$. The restriction of
$G_{\theta}$ on  $\Bbb T$ is an analytic  homeomorphism with
rotation number $\theta$ and two critical points $1$ and $-1$. Since
the framework  of Yoccoz's cell  construction  presented in
\cite{PZ} is made for analytic circle mappings with exactly one
critical point, we will
 transform $G_{\theta}$ to an intermediate model map which has exactly one critical point in $\Bbb T$.   The idea is
as follows.  Let $\Phi: {\Bbb C} \to {\Bbb C}$  be the map given by
$z \to z^{2}$. Let
$$
g_{\theta}(z) = \Phi \circ G_{\theta} \circ \Phi^{-1}(z).
$$
In Lemma~\ref{rotation number}, we will prove that $g_{\theta}$ is a
well-defined meoromorphic function  such that  \begin{itemize}
\item[1.] the restriction of $g_{\theta}$ to $\Bbb T$ is an analytic
circle mapping  with exactly one critical point at $1$,  and
\item[2.]
 the rotation number of $g_{\theta}|\Bbb T: \Bbb T \to \Bbb T$ is  $\tau \equiv 2 \theta \mod(1)$ (that
is, $\tau = 2 \theta$ if $0< \theta < 1/2$ and $\tau = 2 \theta -1$
if $1/2 < \theta < 1$).  \end{itemize}
In Lemma~\ref{arith}, we
prove that $\tau $ is also of David type. By Yoccoz's linearization
theorem \cite{Yo1}, there is a circle homeomorphism $h: \Bbb T \to
\Bbb T$ such that $h(1) = 1$ and
$$g_{\theta}|{\Bbb T}(z) = h^{-1} \circ R_{\tau} \circ h(z)$$
where $R_{\tau}: z \to e^{2\pi i \tau} z$ is the rigid rotation
given by $\tau$.

Following the appendix of  \cite{PZ},
 we introduce Yocooz's cell construction
in $\S6$ and  extend the circle homeomorphism $h$ to a David
homeomorphism
$$H: \Delta \to \Delta.$$ Let
$$
\nu_{H} = \frac{\overline{\partial} H }{\partial H}
$$
be the Beltrami differential of $H$ in $\Delta$.   Define
\begin{equation} \widetilde{g}_{\theta}(z) =
\begin{cases}
 g_{\theta}(z) & \text{for $z \in {\Bbb C} - \Delta$}, \\
 H^{-1}\circ R_{\alpha}\circ H(z)  & \text{
for $z \in \Delta$}.
\end{cases}
\end{equation}

Note that $\nu_{H}$ is $\widetilde{g}_{\theta}$-invariant in
$\Delta$.  By pulling back  $\nu_{H}$ through the iterations of
$\widetilde{g}_{\theta}$, we get a
$\widetilde{g}_{\theta}$-invariant Beltrami differential in the
whole complex plane. Let us denote this Beltrami differential by
$\nu$.   Let $\mu$ be the Beltrami differential in the complex plane
which is defined by pulling back $\nu$ through the square map
$\Phi$. Define
\begin{equation} \widetilde{G}_{\theta}(z) =
\begin{cases}
 G_{\theta}(z) & \text{for $z \in {\Bbb C} - \Delta$}, \\
\Phi^{-1}\circ H^{-1}\circ R_{\tau}\circ H \circ \Phi(z) & \text{
for $z \in \Delta$}.
\end{cases}
\end{equation}
We will show that $\widetilde{G}_{\theta}$ is well defined in
$\Delta$ and is continuous in $\Bbb C$. In Lemma~\ref{final} we show
that $\mu$ is $\widetilde{G}_{\theta}$-invariant. In
Lemma~\ref{equi}, we  show that the integrability of $\nu$ implies
that of $\mu$.

In $\S7$, we prove Theorem 1 by assuming the integrability of $\mu$.
 By David's integrability
theorem,  there is a unique homeomorphism $\phi: \widehat{\Bbb C}
\to \widehat{\Bbb C}$ in $W_{\rm loc}^{1,1}({\Bbb C})$ which fixes
$0$ and $\infty$ and maps $1$ to $\pi/2$ and such that
$$\overline{\partial} \phi = \mu
\partial \phi. $$ Then  using the same argument as in the proof of Lemma 5.5 of
\cite{PZ},  we prove that the map $T_{\theta}(z) = \phi \circ
\widetilde{G}_{\theta}\circ \phi^{-1}(z)$ is an entire function
(Lemma~\ref{PH}). From the construction above, $T_{\theta}$ has a
Siegel disk centered at the origin with rotation number $\theta$,
and moreover,  the boundary of the Siegel disk is a Jordan curve
containing exactly two critical points $\pi/2$ and $-\pi/2$. We then
prove that  $f_{\theta}(z) = T_{\theta}(z)$.  This is proved by
using a topological rigidity property of the sine family which was
proved  in \cite{DS}.  Theorem~\ref{thm1} then follows.

Since the integrability of $\nu$ implies that of $\mu$ by
Lemma~\ref{equi}, it remains to show the integrability of $\nu$.
This is the main task of the paper.
 In $\S8$, we extend the idea of Petersen-Zakeri's proof to verify the integrability of
 $\nu$. More precisely, we will prove that
 there exist constants
$M
> 0$, $\alpha
> 0$,  and $0< \epsilon_{0} < 1$, such that for any $0< \epsilon <
\epsilon_{0}$, the following inequality holds,
\begin{equation}\label{integrability-p}
area \{z\:\big{|}\: |\nu(z)| > 1 - \epsilon \} \le M
e^{-\alpha/\epsilon},
\end{equation}
where $area(X)$  denotes the spherical area of a measurable subset
$X \subset \widehat{\Bbb C}$. This is the heart of the paper.

The idea of our proof can be sketched as follows.  Let $Y_n$ be the
union of all the Yoccoz's cells of level $n$. Each Yoccoz cell is
like a closed trapezoid. For the construction of Yoccoz cells, see
$\S6$.  Then $\{Y_n\}$ is a sequence of nested closed annuli in
$\Delta$:
$$
Y_{N_0} \supset Y_{N_0+1} \supset \cdots Y_n \supset Y_{n+1} \supset
\cdots
$$  where $N_0 \ge 1$ is some fixed integer.  By  Yoccoz's extension theorem (cf. Theorem 6.5 of \cite{PZ} or Theorem~\ref{Yoccoz extension} of the this paper),
  there exist
constants $1 < \lambda, \: K < \infty$ such that
\begin{itemize}
 \item[1.] $H$ is $K$-quasiconformal in $\Delta \setminus {Y_{N_{0}}}$,
 \item[2.] for all $n \ge N_{0}$, the dilatation of $H$  in $\Delta\setminus
Y_{n+2}$ is not greater than $\lambda \cdot n$. \end{itemize}

Now let
$$
X = \{z \in {\Bbb C} \setminus \overline{\Delta}\:\big{|}\:
g_{\theta}^{k}(z) \in \Delta \hbox{ for some integer } k \ge 1\}.
$$
For each $z \in X$, let $k_{z} \ge 1$ be the least positive integer
such that $g_{\theta}^{k_{z}}(z) \in \Delta$.    Define
$$
X_{n} = \{z \in X \:\big{|}\: g_{\theta}^{k_{z}}(z) \in Y_{n}\}.
$$
\begin{rmk}\label{rk1}{\rm
From the definition of $X_n$, it is easy to see that if $z \in X_n$,
then all the points in the backward orbit of $z$ under $
\widetilde{g}_{\theta}(z)$ belong to $X_n$. }
\end{rmk}

Inspired by Petersen-Zakeri's proof ( cf. Theorem 4.15 of
\cite{PZ}), in  Proposition~\ref{poo8}, we  reduce  the proof of the
integrability of $\nu$ to

\begin{ml}\label{main lemma}
There exist $C > 0$, $0< \epsilon < 1$,  $0< \delta < 1$ and an
integer $N_{1} \ge N_{0}$  such that
\begin{equation}\label{pz-cci}
area(X_{n+2}) \le C \cdot \epsilon^{n} + \delta \cdot \:
area(X_{n}), \:\:\:\forall \:n >  N_1.
\end{equation}
\end{ml}
We remark here that   Petersen-Zakeri's proof of (\ref{pz-cci})
relies essentially on Petersen's puzzle construction for the
pre-model $\widehat{G}$. To overcome the problem caused by lack of
Petersen's puzzle construction for $\widetilde{g}_{\theta}$, we
introduce a new idea in this paper. The idea contains three key
ingredients which are described as follows.

The first ingredient of the idea   is  a variant of Vitali's
Covering Lemma. For $z \in {\Bbb C}$ and $r
> 0$, let $B_{r}(z)$ denote the Euclidean disk with radius $r$ and
centered at $z$.
\begin{defi}[$K$-bounded geometry]\label{bgp} Let $K > 1$ and $(U, V)$ be a
pair of sets in $\Bbb C$ such that $V \subset U$. We say $(U, V)$
has $K$-bounded geometry if there exist $x \in V$ and $r > 0$ such
that
$$
B_{r}(x) \subset V \subset U \subset B_{Kr}(x).
$$
\end{defi}
\begin{lem}\label{covering lemma}
Let $K > 1$ and $L = 8K + 9$. Then  for any finite family of pairs
of measurable sets $\{(U_{i}, V_{i})\}_{i \in \Lambda}$ all of which
have $K$-bounded geometry, namely, for each $i \in \Lambda$, there
exist $x_i \in V_i$ and $r_i
> 0$ satisfying  \begin{equation}\label{m-qq}
B_{r_i}(x_i) \subset V_i \subset U_i \subset
B_{Kr_i}(x_i),\end{equation}  there is a subfamily $\sigma_{0}$ of
$\Lambda$ such that all $B_{r_{j}}(x_{j}), j \in \sigma_{0}$, are
disjoint, and moreover,
$$
\bigcup_{i \in \Lambda} U_{i} \subset \bigcup_{j \in \sigma_{0}}
B_{Lr_{j}}(x_{j}).
$$
In particular, we have
$$
m \big{(}\bigcup_{i \in \Lambda} U_{i}\big{)} \le L^2 \cdot
m\big{(}\bigcup_{i \in \Lambda} V_{i}\big{)}
$$ where $m(\cdot)$ denotes the area with respect to the Euclidean
metric.
\end{lem} It is worth to note that the last assertion
of Lemma~\ref{covering lemma} is not true if we consider  spherical
area instead of Euclidean area. For instance, the pair $(B_{2R}(R),
B_{R/2}(R))$ has $K$-bounded geometry with $K = 4$. But as $R \to
\infty$, the spherical area of $B_{2R}(R)$ goes to $\infty$ and the
spherical area of $B_{R/2}(R)$ goes to $0$. Since we consider
spherical area in this paper, we need a particular variant of the
above lemma. Let  ${\rm diam}(\cdot)$ and ${\rm dist}(\cdot, \cdot)$
denote respectively  the   diameter  and the distance with respect
to the Euclidean metric.  Let  $area(\cdot)$  denote the area  with
respect to the spherical metric. Let $\Omega = {\Bbb C} \setminus
\overline{\Delta}$.
\begin{cor}\label{spherical area}{\rm
Let $K > 1$. Suppose $\{(U_{i}, V_{i})\}_{i \in \Lambda}$ is a
finite family of pairs of measurable sets   in $\Omega$ all of which
have $K$-bounded geometry.  If in addition
\begin{equation}\label{s-e}
{\rm diam}(U_{i})< K \cdot {\rm dist}(U_i, \Bbb T)\end{equation}
holds for every $i \in \Lambda$, then
$$
area(\bigcup_{i\in\Lambda}U_{i}) \le  \lambda(K)\cdot
area(\bigcup_{i\in\Lambda} V_{i})
$$
where $1< \lambda(K) < \infty$ is a constant depending only on $K$.}
\end{cor}
The proofs of Lemma~\ref{covering lemma} and
Corollary~\ref{spherical area} will be given in $\S8.2$.

 The second ingredient of the idea  is the concept of
``$K$-admissible pair''.
\begin{defi}\label{admiss}{\rm Let $1 < K  < \infty$ and $z \in
X_{n+2}$.  We say $z$ is associated to a $K$-admissible pair $(U,
V)$
 if $V \subset U \subset \Omega$ are two
open topological disks such that $z \in U$ and
\begin{itemize}
\item[1.] $V \subset X_{n}\setminus X_{n+2}$,
\item[2.] the pair $(U, V)$ has $K$-bounded geometry,
\item[3.] ${\rm diam}(U)< K \cdot {\rm dist}(U, \Bbb T)$.
\end{itemize}}
\end{defi}

The third ingredient of the idea  is a group of dynamically defined
domains.

Let $I \subset \Bbb T$ be an open interval. Let $\Bbb C^{*} = \Bbb C
\setminus \{0\}$ be the punctured plane. Set
\begin{equation}\label{slt}
\Omega_{I} = {\Bbb C}^{*} \setminus ({\Bbb T} \setminus I).
\end{equation}Then $\Omega_I$ is a hyperbolic Riemann surface.  For $d > 0$,
the hyperbolic neighborhood of $I$ is defined by
\begin{equation}\label{dhs}
\Omega_{d}(I) = \{z \in \Omega_{I}\:\big{|}\: d_{\Omega_{I}}(z, I) <
d\} \end{equation} where $d_{\Omega_{I}}(\cdot, \cdot)$ denotes the
hyperbolic distance in $\Omega_{I}$. We will show
\begin{lem}\label{hn-ps}
Let $d > 0$ be given.  Then when $I$ is small, $\Omega_{d}(I)$ is a
Jordan domain and is like the hyperbolic neighborhood of the slit
plane:
\begin{itemize}
\item[1.] $\partial \Omega_{d}(I) = \gamma_{int} \cup \gamma_{out}$ where $\gamma_{int}$ and $\gamma_{out}$
are real analytic curve segments both of which connect the two end
points of $I$. Moreover, $\gamma_{int}\setminus \partial I \subset
\Delta$ and $\gamma_{out}\setminus \partial I \subset \Bbb C
\setminus \overline{\Delta}$;
\item[2.]
$\gamma_{int}$ and $\gamma_{out}$ are symmetric about $\Bbb T$, and
each of them is like an arc segment of some Euclidean circle;
\item[3.] let $\alpha$ denote the exterior angles formed by $\gamma_{int}$ and $\Bbb T$,
$\gamma_{out}$ and $\Bbb T$, all of which are the same, then $ d =
\log \cot (\alpha/4). $\end{itemize}
\end{lem}

Note that  $\Omega_d(I)$ is divided by $I$ into two parts: one is in
the interior of $\Delta$ and the other one is in the exterior of
$\Delta$.  We only consider  the part which is in the exterior of
$\Delta$. We use $H_\alpha(I)$ to denote this part.  That is,
\begin{equation}\label{hndd}
H_\alpha(I) = \{z \in \Omega_d (I)\:|\: |z| > 1\}
\end{equation}
 where $\alpha$ is
determined by the formula $d = \log \cot (\alpha/4)$.
\begin{figure}
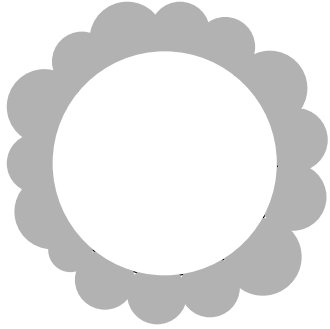

\bigskip
\begin{center}
\centertexdraw { \drawdim cm \linewd 0.02 \move(-2 1)

\move(-1.1 -1.75) \lcir  r:1.5

 \move(-2.8 -1.75) \fcir f:0.7 r:0.4
 \move(-2.7 -1.) \fcir f:0.7 r:0.5
  \move(-2.2 -0.4) \fcir f:0.7 r:0.4

  \move(-1.6 -0.1) \fcir f:0.7 r:0.5
  \move(-0.9 0) \fcir f:0.7 r:0.4
    \move(-0.3 -0.2) \fcir f:0.7 r:0.4
   \move(0.3 -0.75) \fcir f:0.7 r:0.5
  \move(0.64 -1.44) \fcir f:0.7 r:0.42
\move(0.6 -2.2) \fcir f:0.7 r:0.45

\move(0.2 -3) \fcir f:0.7 r:0.52

\move(-0.5 -3.4) \fcir f:0.7 r:0.4

\move(-1.2 -3.5) \fcir f:0.7 r:0.4

\move(-1.9 -3.3) \fcir f:0.7 r:0.4

\move(-2.6 -2.4) \fcir f:0.7 r:0.5

\move(-2.35 -2.9) \fcir f:0.7 r:0.3

\move(-1.1 -1.75) \fcir f:1 r:1.49

}
\end{center}
\vspace{0.2cm} \caption{The set $Z_n$}
\end{figure}

Recall that $g_{\theta}|\Bbb T: \Bbb T \to \Bbb T$ is a critical
circle mapping with rotation number $\tau$. Let $[a_1, \cdots, a_n,
\cdots]$ be the continued fraction of $\tau$ and $p_n/q_n$, $n \ge
0$, be the truncated continued fractions of $\tau$.  For $i \ge 0$,
let $x_i \in \Bbb T$ be the point such that $g_\theta^i(x_i) = 1$.
Take  $$0<\alpha < \pi/3$$ and let it  be fixed throughout the
paper. For $n
> 0$, let
$$
I_{n} = [1, x_{q_{n}}], \:I_{n+1} = [1, x_{q_{n+1}}].
$$
For $0 \le i \le q_{n+1} -1$, let $I_{n}^{i}\subset \Bbb T$ denote
the interval such that $g_{\theta}^{i} = I_{n}$. For $0\le i \le
q_{n} -1$, let $I_{n+1}^{i}\subset \Bbb T$ denote the interval such
that $g_{\theta}^{i}(I_{n+1}^{i}) = I_{n+1}$.  It is known that the
collection of the intervals $$I_n^i, 0 \le i \le q_{n+1}-1; \:\:
I_{n+1}^{i}, 0 \le i \le q_{n} -1,
$$ form a partition of $\Bbb T$. In $\S4$ we call this partition  the dynamical partition of level
$n$.

Define
\begin{equation}\label{ZN}
Z_{n}  =  \bigcup_{0\le i\le q_{n+1}-1} H_{\alpha}(I_{n}^{i}) \cup
\bigcup_{0\le i\le q_{n}-1} H_{\alpha}(I_{n+1}^{i}). \end{equation}
It is easy to see that $Z_n$ is the outer half of an open
neighborhood of $\Bbb T$. See Figure 1 for an illustration. As a
direct consequence of Lemma~\ref{hn-ps} and the real bounds for the
intervals of the dynamical partition (cf. Theorem~\ref{SH}),  it
follows  that there exist $C
> 1$ and $0 < \epsilon < 1$ such that
\begin{equation}\label{artes}
area(Z_n) < C \cdot \epsilon^n.
\end{equation}

By Corollary~\ref{spherical area} and (\ref{artes}), we reduce the
Main Lemma in Proposition~\ref{mred}  to
\begin{lem}\label{pre-lem}
There exist  $K > 1$  and $N_1 \ge N_0$ such that for all $n \ge
N_1$, if
 $z \in X_{n+2}$,  then either $z \in Z_{n}$, or $z$ is associated to
some $K$-admissible pair $(U, V)$.
\end{lem}

The proof of Lemma~\ref{pre-lem} is the main task of the paper and
is postponed until the end of the paper.   The proof of Theorem 1 is
completed once  Lemma~\ref{pre-lem} has been proved.

%*******************************************************************************
%*******************************************************************************

\section{David Homeomorphisms and David's integrability Theorem}
Let $\Omega \subset \widehat{\Bbb C}$ be a domain.  A Beltrami
differential $\mu = \mu(z)d\overline{z}/dz$ in $\Omega$ is a
measurable $(-1, 1)$-form such that $|\mu(z)| < 1$ almost everywhere
in $\Omega$. We  say $\mu$ is $\emph{integrable}$ if there is a
homeomorphism $\phi: \Omega \to \Omega'$ in $W_{\rm
loc}^{1,1}(\Omega)$ which solves the Beltrami equation
\begin{equation}\label{be}
\overline{\partial} \phi = \mu \partial \phi.
\end{equation}
The map $\phi$ is called a David homeomorphism. When
$\|\mu\|_{\infty} < 1$, the map $\phi$ is the classical
quasiconformal mapping.

Recall that $area(X)$ is used to denote the spherical area of a
subset $X \subset \widehat{\Bbb C}$.
\begin{thm}[David \cite{Da}]\label{David}
Let $\Omega \subset \widehat{\Bbb C}$ be a domain. Let $\mu$ be a
Beltrami differential in $\Omega$. Then $\mu$ is integrable if there
exist constants $M > 0$,  $\alpha > 0$, and $0< \epsilon_{0} < 1$,
such that for any $0< \epsilon < \epsilon_{0}$, the following
inequality holds,
$$
area \{z\:\big{|}\: |\mu(z)| > 1 - \epsilon \} \le M
e^{-\alpha/\epsilon}.
$$
Moreover, if $\mu$ is integrable, up to postcomposing a conformal
map, there is a unique solution $\phi: \Omega \to \Omega'$ in
$W_{\rm loc}^{1,1}(\Omega)$ which solves the Beltrami
equation~(\ref{be}). That is, if $\psi: \Omega\to \Omega''$ is
another such solution, then there is a conformal map $\sigma:
\Omega' \to \Omega''$ such that $\psi = \sigma \circ \phi$.
\end{thm}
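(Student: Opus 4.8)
The plan is to follow the classical approximation scheme that underlies the measurable Riemann mapping theorem, inserting at each step the extra integrability that David's hypothesis provides. First I would translate the area hypothesis into a statement about the pointwise dilatation $K_{\mu}(z) = (1+|\mu(z)|)/(1-|\mu(z)|)$: since $|\mu(z)| > 1-\epsilon$ is, up to an absolute constant, the same as $K_{\mu}(z) > 1/\epsilon$, the hypothesis is equivalent to $area\{z : K_{\mu}(z) > t\} \le M' e^{-\alpha' t}$ for all large $t$, hence to exponential integrability: $\int_{\Omega} e^{\lambda K_{\mu}}\, dm < \infty$ for every $\lambda < \alpha'$. In particular $K_{\mu} \in L^{1}_{loc}$, and in fact $K_{\mu} \log(e + K_{\mu}) \in L^{1}_{loc}$, which is the Orlicz-type integrability that will drive all the estimates below.

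For each integer $n \ge 2$ put $\mu_{n} = \mu \cdot \mathbf{1}_{\{|\mu| \le 1 - 1/n\}}$, so that $\|\mu_{n}\|_{\infty} \le 1 - 1/n < 1$ and $\mu_{n} \to \mu$ a.e.\ with $|\mu_{n}| \le |\mu|$. The measurable Riemann mapping theorem produces quasiconformal homeomorphisms $\phi_{n} \colon \Omega \to \Omega_{n}$ with $\overline{\partial}\phi_{n} = \mu_{n}\,\partial\phi_{n}$; after a normalization (fixing three boundary or interior points, or working on a fixed subdomain compactly contained in $\Omega$) I want to extract a limit. The key a priori bound is the distortion inequality $|D\phi_{n}(z)|^{2} \le K_{\mu_{n}}(z)\, J_{\phi_{n}}(z) \le K_{\mu}(z)\, J_{\phi_{n}}(z)$ a.e.; integrating over a relatively compact $U \Subset \Omega$ and using Cauchy--Schwarz gives
\[
\int_{U} |D\phi_{n}|\, dm \;\le\; \Big(\int_{U} K_{\mu}\, dm\Big)^{1/2}\,\Big(\int_{U} J_{\phi_{n}}\, dm\Big)^{1/2} \;=\; \Big(\int_{U} K_{\mu}\, dm\Big)^{1/2}\, area\big(\phi_{n}(U)\big)^{1/2},
\]
which is uniformly bounded once the images are controlled by the normalization. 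Thus $\{\phi_{n}\}$ is bounded in $W^{1,1}_{loc}(\Omega)$; the sharper $L\log L$ integrability of $K_{\mu}$ upgrades this to a uniform (log-H\"older type) modulus of continuity, so by Arzel\`a--Ascoli a subsequence converges locally uniformly to a map $\phi$, with $D\phi_{n} \rightharpoonup D\phi$ weakly in $L^{1}_{loc}$.

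The main obstacle is to prove that the limit $\phi$ is a homeomorphism and genuinely solves $\overline{\partial}\phi = \mu\,\partial\phi$ a.e.: because $K_{\mu_{n}}$ blows up on a set of positive measure, neither injectivity nor non-degeneracy of $\phi_{n}$ passes to the limit for free. I would control this by a quantitative area-distortion estimate driven again by exponential integrability of $K_{\mu}$: such maps cannot compress a set of positive measure to a set of measure zero, equivalently the inverse maps $\phi_{n}^{-1}$ enjoy a uniform modulus of continuity as well, so $\phi$ is light and sense-preserving, hence open and discrete, hence (being a locally uniform limit of homeomorphisms) itself a homeomorphism. Combining the weak convergence $D\phi_{n} \rightharpoonup D\phi$ with $\mu_{n} \to \mu$ a.e.\ and the uniform bounds then yields the Beltrami equation for $\phi$ in the a.e.\ sense, and $\phi \in W^{1,1}_{loc}(\Omega)$ from the uniform $L^{1}$-bound. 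This non-degeneracy step is the technical heart; the rest is soft.

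For the uniqueness statement, suppose $\psi \colon \Omega \to \Omega''$ is another $W^{1,1}_{loc}$ solution with the same $\mu$, and consider $F = \psi \circ \phi^{-1}$. One checks that David homeomorphisms form a pseudogroup: $\phi^{-1}$ is again a David homeomorphism, with $K_{\phi^{-1}} = K_{\phi}\circ\phi^{-1}$ still exponentially integrable by area distortion, and the composition of two David homeomorphisms is David, with the usual chain rule for Beltrami coefficients valid a.e. Applying that rule here gives $\mu_{F} = 0$ a.e., so $F$ is a $W^{1,1}_{loc}$ homeomorphism with $\overline{\partial}F = 0$ a.e. The final point is that such an $F$ must be conformal: the $L\log L$ gradient bound (which excludes the classical $W^{1,1}$ counterexamples to Weyl's lemma) suffices to conclude via Weyl's lemma that $F = \sigma$ is holomorphic, and being an injective holomorphic map it is conformal. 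Hence $\psi = \sigma \circ \phi$, as claimed.
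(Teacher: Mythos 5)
The paper does not prove this statement at all: it is quoted verbatim as David's theorem with the citation \cite{Da}, so there is no internal proof to compare against. Your outline does follow the route used in the literature (David's original paper, and the expositions via Lehto-type conditions): truncate $\mu$, solve with the measurable Riemann mapping theorem, extract a limit by a normal-family argument powered by exponential integrability of $K_{\mu}$, and obtain uniqueness by showing $\psi\circ\phi^{-1}$ is conformal. As a strategy it is the correct one; as a proof it has gaps exactly at the points where the theorem lives.

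Concretely: (a) boundedness of $\{D\phi_{n}\}$ in $L^{1}_{loc}$ does not by itself give a weakly convergent subsequence ($L^{1}$ is not reflexive); you need equi-integrability and Dunford--Pettis. Your own Cauchy--Schwarz estimate, applied on an arbitrary measurable $E\subset U$, gives $\int_{E}|D\phi_{n}|\le(\int_{E}K_{\mu})^{1/2}(\int_{U}J_{\phi_{n}})^{1/2}$, and since $K_{\mu}\in L^{1}_{loc}$ this does yield equi-integrability --- but this must be said, and it is also what lets you pass $(\mu_{n}-\mu)\partial\phi_{n}\to 0$ in the limit equation (via Egorov), a step you compress into ``uniform bounds''. (b) The uniform modulus of continuity of $\phi_{n}$, and above all of $\phi_{n}^{-1}$, is asserted rather than derived. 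Note that $K_{\phi_{n}^{-1}}(w)=K_{\phi_{n}}(\phi_{n}^{-1}(w))$ is not controlled pointwise by the hypothesis, and ``a quantitative area-distortion estimate driven by exponential integrability'' is essentially the conclusion of David's key lemma, not something one may invoke; this equicontinuity (usually proved through modulus estimates for ring domains under maps with exponentially integrable dilatation) is the technical heart you acknowledge but do not supply, and without it neither injectivity of the limit nor the normalization argument closes. (c) In the uniqueness part, the chain rule for $F=\psi\circ\phi^{-1}$ is not valid for arbitrary $W^{1,1}_{loc}$ homeomorphisms: you need condition (N), a.e.\ differentiability with $J_{\phi}>0$ a.e., and the fact that $\phi^{-1}\in W^{1,2}_{loc}$, which follows from the change-of-variables identity $\int_{V}|D\phi^{-1}|^{2}\le\int_{\phi^{-1}(V)}K_{\phi}\,dm$ rather than from any exponential integrability of $K_{\phi^{-1}}$; moreover the claim that a composition of two David homeomorphisms is again David is both unnecessary for this step and false in general. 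Once $F\in W^{1,1}_{loc}$ with distributional $\overline{\partial}F=0$ is actually established, Weyl's lemma does finish the argument as you say.
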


\section{Real bounds} The materials in this section are  standard. We
provide them here just for the convenience of the readers. Our
presentation follows $\S2$  and the appendix of \cite{PZ}.

Let $f: {\Bbb T} \to {\Bbb T}$ be a real analytic homeomorphism
which preserves the orientation. We say $f$ is a $\emph{critical
circle mapping}$ if it has exactly one critical point at $1$.

Suppose $f$ is a critical circle mapping with an irrational rotation
number $\theta$. Let $a_i, i \ge 0$, be all the coefficients of the
continued fraction of $\theta$ and  $p_{n}/q_{n}, n \ge 0$, be all
the truncated continued fractions of $\theta$. By definition, we
have
$$
p_0 = 0, p_1 = 1;\:\: q_0 = 1, q_1 = a_1,
$$
and for all $n \ge 2$, we have
$$
p_n = a_n p_{n-1} + p_{n-2} \hbox{ and } q_n = a_n q_{n-1} +
q_{n-2}.
$$

Let $x_{0} = 1$. For $i = 1, 2, \cdots$, let $x_{i}\in \Bbb T$
denote backward iterate $f^{-i}(1)$, that is, the point in $\Bbb T$
such that $f^{i}(x_{i}) = 1$, and let $x_{-i}$ denote the forward
iterate $f^{i}(1)$.  Let $I_{n} = [1, x_{q_{n}}]$ denote the $n$-th
closest return interval under $f^{-1}$. For $i \ge 0$, let
$I_{n}^{i} \subset \Bbb T$ denote the interval such that
$f^{i}(I_{n}^{i}) = I_{n}$, that is, $I_{n}^{i} = [x_{i}, x_{q_{n}
+i}]$. Then the collection of the intervals
$$
I_{n}^{i}, \: 0 \le i \le q_{n+1}-1, \hbox{ and } I_{n+1}^{i}, \: 0
\le i\le q_{n}-1,
$$
defines a partition of $\Bbb T$ modulo the common end points. We
call such a partition a $\emph{dynamical partition}$ of level $n$.
It is not difficult to see that the set of all the end points in
this partition is
$$
\Pi_{n} = \{x_{i}\:\big{|} \:\: 0 \le i  \le  q_{n} + q_{n+1}-1\}.
$$

Let $K > 1$.  We say two intervals $I$ and $J$ are
$K$-$\emph{commensurable}$ if $|J|/K < |I| < K\cdot |J|$.
\begin{thm}[\'{S}wiatek-Herman, see \cite{dFdM}, \cite{P2} and \cite{PZ}]\label{SH}
Let $f: {\Bbb T} \to {\Bbb T}$ be a   critical circle mapping with
an irrational rotation number $\theta$.Then there is a constant $1 <
K < \infty$ depending only on $f$ such that for any $n \ge 1$,
\begin{itemize}\item[1.] any two adjacent intervals $I$ and $J$ in the dynamical
partition of $\Bbb T$ of level $n$  are $K$-commensurable, and
\item[2.]  the two intervals  $[x,
f^{q_{n}}(x)]$ and $[x, f^{-q_{n}}(x)]$ are $K$-commensurable  for
any point $x \in \Bbb T$.
\end{itemize}
\end{thm}
 Now let us consider another partition of $\Bbb T$. Let
$$
\Xi_{n}  = \{x_{i}\:\big{|}\: 0 \le i \le  q_{n+1}-1\}.
$$

The points in $\Xi_{n}$ separate $\Bbb T$ into disjoint intervals.
This partition arises in Yoccoz's cell construction(see the appendix
of \cite{PZ} or $\S6$ of this paper).  We  call it the $\emph{cell
partition}$ of level $n$. The following  lemma describes the basic
relation between the two partitions. The proof of the  lemma can be
found in the appendix of \cite{PZ}. For the convenience of the
readers, we present the proof here.
\begin{lem}\label{dyn-cell}
Each interval in ${\Bbb T}\setminus \Xi_{n}$ is either an interval
in ${\Bbb T}\setminus \Pi_{n}$ or the union of two adjacent
intervals in ${\Bbb T}\setminus \Pi_{n}$.
\end{lem}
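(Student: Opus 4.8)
The plan is to compare the two sets of endpoints $\Xi_n$ and $\Pi_n$ directly. Recall that
$$
\Xi_n = \{x_i \mid 0 \le i < q_{n+1}\}, \qquad \Pi_n = \{x_i \mid 0 \le i < q_n + q_{n+1}\},
$$
so $\Xi_n \subset \Pi_n$ and $\Pi_n \setminus \Xi_n = \{x_i \mid q_{n+1} \le i < q_n + q_{n+1}\}$, a set of exactly $q_n$ points. Thus the cell partition of level $n$ is obtained from the dynamical partition of level $n$ by deleting these $q_n$ points. The statement will follow once we show that no two of the deleted points $x_i$, $x_j$ with $q_{n+1} \le i < j < q_n + q_{n+1}$ are consecutive in the cyclic order of $\Pi_n$ on $\Bbb T$; equivalently, between any two deleted points there is at least one surviving point of $\Xi_n$. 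Granting this, each interval of ${\Bbb T} \setminus \Xi_n$ contains at most one deleted point in its interior, hence is either unchanged (no deleted point) or a union of exactly two adjacent intervals of ${\Bbb T}\setminus\Pi_n$ (one deleted point splitting it).

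First I would recall the combinatorial description of $\Pi_n$ coming from the dynamical partition: the intervals $I_n^i = [x_i, x_{q_n+i}]$ for $0 \le i \le q_{n+1}-1$ together with $I_{n+1}^j = [x_j, x_{q_{n+1}+j}]$ for $0 \le j \le q_n - 1$ tile $\Bbb T$ modulo endpoints. The point $x_i$ for $q_{n+1} \le i < q_n + q_{n+1}$ is the right endpoint $x_{q_{n+1}+j}$ of the interval $I_{n+1}^j$ where $j = i - q_{n+1} \in \{0,\dots,q_n-1\}$, and also (when $j \ge 0$, $i = q_{n+1}+j$) equals $x_{q_n + i'}$ with $i' = i - q_n$; one checks $q_{n+1}-q_n \le i' < q_{n+1}$, so $x_i$ is the right endpoint of $I_n^{i'}$ as well. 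The left endpoints of these two intervals are $x_j$ and $x_{i'}$ respectively, and since $0 \le j < q_n \le q_{n+1}$ and $q_{n+1}-q_n \le i' < q_{n+1}$, both $x_j$ and $x_{i'}$ lie in $\Xi_n$. So each deleted point $x_i$ is the common right endpoint of two partition intervals whose left endpoints survive in $\Xi_n$. I would use this to conclude that the two neighbours of $x_i$ in $\Pi_n$ are precisely $x_j$ and $x_{i'}$ (the closer of the two on each side), both in $\Xi_n$, which immediately gives that no two deleted points are adjacent in $\Pi_n$.

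The main obstacle is making the last step rigorous: one must verify that the two intervals of the dynamical partition meeting at a deleted point $x_i$ are indeed $I_{n+1}^j$ and $I_n^{i'}$ with the indices computed above, i.e. that these are genuinely the partition pieces adjacent to $x_i$ and that $x_i$ is an endpoint of no other piece on that side. This is a standard but slightly delicate book-keeping argument about the structure of dynamical partitions of critical circle maps (the renormalization/continued-fraction combinatorics), and it is where I would spend the most care; alternatively, one can invoke the well-known fact that in the dynamical partition of level $n$ every point of $\Pi_n \setminus \Xi_n = \{x_{q_{n+1}}, \dots, x_{q_n+q_{n+1}-1}\}$ separates an interval of the cell partition $\Bbb T \setminus \Xi_n$ into the two adjacent level-$n$ pieces, a fact which is built into Yoccoz's cell construction. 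Once that adjacency bookkeeping is settled, the dichotomy in the statement is immediate, and Theorem~\ref{SH} is not even needed for this lemma (it is only the combinatorics, not the geometry, that matters here).
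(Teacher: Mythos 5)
Your argument is correct, and it is in fact more self-contained than the paper, which offers no proof of this lemma at all but simply refers the reader to \S 6 of \cite{PZ}. Your reduction is the right one: since $\Xi_{n}\subset\Pi_{n}$ and $\Pi_{n}\setminus\Xi_{n}=\{x_{i}\mid q_{n+1}\le i<q_{n}+q_{n+1}\}$, it suffices to show that no deleted point has another deleted point as a $\Pi_{n}$-neighbour, and then each cell-partition interval contains at most one deleted point in its interior, giving exactly the stated dichotomy.

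The step you flag as the ``main obstacle'' is actually immediate and needs no delicate renormalization bookkeeping. The endpoints of the partition pieces are precisely the points of $\Pi_{n}$, and a deleted point $x_{i}$ with $q_{n+1}\le i<q_{n}+q_{n+1}$ can be an endpoint of a piece in only two ways: it cannot be the base point $x_{k}$ of $I_{n}^{k}$ (since $k<q_{n+1}\le i$) nor the base point $x_{j}$ of $I_{n+1}^{j}$ (since $j<q_{n}\le i$), so the only pieces having $x_{i}$ as an endpoint are $I_{n}^{i-q_{n}}=[x_{i-q_{n}},x_{i}]$ and $I_{n+1}^{i-q_{n+1}}=[x_{i-q_{n+1}},x_{i}]$, both with indices in the admissible ranges. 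Because the pieces tile $\Bbb T$ with pairwise disjoint interiors, two distinct pieces sharing the endpoint $x_{i}$ must lie on opposite sides of it (otherwise their interiors would overlap near $x_{i}$), so these two pieces are exactly the ones flanking $x_{i}$, and its $\Pi_{n}$-neighbours are $x_{i-q_{n}}$ and $x_{i-q_{n+1}}$, whose indices are $<q_{n+1}$ and hence lie in $\Xi_{n}$. This closes the only gap in your write-up; your parenthetical ``the closer of the two on each side'' is unnecessary, since each of the two pieces, being a partition interval with no partition point in its interior, already determines the neighbour on its side. You are also right that only the combinatorics of the indices is used, not Theorem~\ref{SH}; the count $|\Pi_{n}|-|\Xi_{n}|=q_{n}$ merged endpoints turning $q_{n}+q_{n+1}$ pieces into $q_{n+1}$ cells is a useful consistency check.
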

\begin{rmk}{\rm
Note that the definition of the cell partition  here is a little
different from that in \cite{PZ} where the cell partition of level
$n$ is defined by the set of points $\mathcal{Q}_n = \{x_{i}\:
\big{|}\:\: 0 \le i \le q_{n}-1\}$. Therefore, the cells of level
$n$ in this paper correspond to the cells of level $n+1$ there.}
\end{rmk}
\begin{proof}
Assume that $[x_i, x_j]$ is an interval component of $\Bbb T
\setminus \Xi_{n}$. It suffices to prove that the interior of $[x_i,
x_j]$ contains at most one point in $\Pi_n$. Let us prove this by
contradiction. Suppose the interior of $[x_i, x_j]$ contains more
than one point in $\Pi_n$. Then we can take two  points $x_l$ and
$x_m$ in $\Pi_n$ which  are contained in the interior of $[x_i,
x_j]$ and are adjacent in $\Pi_n$.  Without loss of generality, we
may assume that $l < m$. Since both $x_l$ and $x_m$ are not in
$\Xi_n$, it follows that $m > l \ge q_{n+1}$. Since $x_l$ and $x_m$
are two adjacent points in $\Pi_n$, we have either $m - l = q_n$ or
$m- l = q_{n+1}$. This implies that
$$
m = l + (m-l) \ge q_{n+1} + q_n .
$$
But since $x_m \in \Pi_n$, we have $0 \le m \le q_n + q_{n+1}-1$.
This is a contradiction.  The proof of Lemma~\ref{dyn-cell} is
completed.

\end{proof}

  As an immediate
consequence of Theorem~\ref{SH} and Lemma~\ref{dyn-cell}, we have
\begin{cor}\label{c-g}
There is a constant $1 < K < \infty$ depending only on $f$ such any
two adjacent interval components in the cell partition of level $n$
are $K$-commensurable.
\end{cor}

\begin{lem}\label{geom}
There is a  $0< \delta < 1$ which depends only on $f$ such that for
any interval $I$ in ${\Bbb T} \setminus \Xi_{n+2}$, there is some
interval $J$ in ${\Bbb T} \setminus \Xi_{n}$ with $I \subset J$ and
$|I| < \delta |J|$.
\end{lem}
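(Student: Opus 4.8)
The plan is to derive Lemma~\ref{geom} by combining the combinatorial comparison of the cell and dynamical partitions (Lemmas~\ref{dyn-cell} and~\ref{cell}) with the Świątek--Herman real bounds (Theorem~\ref{SH}). First I would fix an interval $I \subset {\Bbb T}\setminus\Xi_{n+2}$. By Lemma~\ref{dyn-cell}, $I$ is either an interval of ${\Bbb T}\setminus\Pi_{n+2}$ or the union of two adjacent such intervals; in either case $I$ is contained in some interval $I'$ of the cell partition of level $n+1$ (since $\Xi_{n+1}\subset\Xi_{n+2}$, every level-$(n+1)$ cell is a union of level-$(n+2)$ cells, hence $I$ sits inside one of them). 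Likewise $I'$ is contained in a level-$n$ cell $J$. So the containment $I\subset J$ with $J\in{\Bbb T}\setminus\Xi_n$ is automatic; the entire content of the lemma is the \emph{definite} gap $|I|<\delta|J|$ with $\delta$ independent of $n$ and of the choice of $I$.

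To get the definite gap, the key point is that $I$ cannot equal $J$. I would argue as follows: if $|I|$ were comparable to $|J|$ up to a constant arbitrarily close to $1$, then, tracing through Lemma~\ref{dyn-cell}, the level-$(n+2)$ cell(s) comprising $I$ would have to fill up essentially all of the level-$n$ cell $J$. By Theorem~\ref{SH}, adjacent intervals in a dynamical partition have comparable length, and $|[x,f^{-q_m}(x)]|\asymp|[x,f^{q_m}(x)]|$; combined with Lemma~\ref{dyn-cell} this shows each level-$(n+1)$ cell splits into level-$(n+2)$ cells of comparable size, and likewise each level-$n$ cell splits into at least two level-$(n+1)$ cells of comparable size \emph{unless} the exceptional case of Lemma~\ref{cell} occurs. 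The observation recorded just before the lemma is that Lemma~\ref{cell} forbids two adjacent points of $\Xi_n$ from remaining adjacent in $\Xi_{n+2}$: so after passing two cell-levels, every level-$n$ cell acquires at least one new subdivision point strictly inside it. That new point, together with the real bounds, forces any level-$(n+2)$ cell inside $J$ to omit a definite proportion of $J$.

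Concretely, I would make this quantitative by counting: let $J=[x_j,x_k]$ be a level-$n$ cell. It is subdivided by the points of $\Xi_{n+2}\cap(x_j,x_k)$, which by the remark is nonempty, into at least two pieces, each of which is a union of intervals of the dynamical partition of level $n+2$ (refining the one of level $n$ on $J$ via Lemma~\ref{dyn-cell} applied twice). By Theorem~\ref{SH} all the dynamical-partition intervals of level $n+2$ lying in $J$ have mutually comparable lengths with a universal constant $C=C(f)$, and there are at least two of them, with $I$ contained in a proper sub-union; hence $|I|\le (1-1/(C+1))|J|$, say. Setting $\delta=1-1/(C+1)<1$ finishes the proof, and $\delta$ depends only on $f$ through the Świątek--Herman constant. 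The main obstacle is the bookkeeping of exactly how the level-$n$, level-$(n+1)$, and level-$(n+2)$ partitions interleave — in particular making sure the "new point strictly inside $J$" claim is used correctly when some cells happen to persist across a single level (the phenomenon of Lemma~\ref{cell}), which is precisely why one must step down \emph{two} levels rather than one; once that combinatorial picture is pinned down, the metric estimate is a routine application of Theorem~\ref{SH}.
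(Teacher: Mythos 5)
Your strategy is the one the paper intends: the paper's own ``proof'' is just the sentence preceding the lemma (adjacent points of $\Xi_{n}$ cannot remain adjacent in $\Xi_{n+2}$, by Lemma~\ref{cell}; combine with Theorem~\ref{SH} and Lemma~\ref{dyn-cell}), and your proposal fleshes out exactly that: nestedness $\Xi_{n}\subset\Xi_{n+1}\subset\Xi_{n+2}$ gives the containment $I\subset J$ for free, the two-level step guarantees a point of $\Xi_{n+2}$ strictly inside $J$, and the real bounds turn the resulting strict subdivision into a definite gap.

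There is, however, one step that is wrong as written: the claim that ``all the dynamical-partition intervals of level $n+2$ lying in $J$ have mutually comparable lengths with a universal constant $C=C(f)$.'' Theorem~\ref{SH} gives comparability only for \emph{adjacent} intervals of the same dynamical partition. A level-$n$ cell $J$ can contain on the order of $a_{n+2}a_{n+3}$ intervals of the level-$(n+2)$ dynamical partition, and since $\theta$ is only of David type the partial quotients are unbounded; chaining the adjacency bound across such a long block gives a constant that degenerates with $a_{n+2}$, and in fact the extreme intervals of such a block are genuinely non-comparable. The repair is local and uses only what Theorem~\ref{SH} actually provides: since the endpoints of $J$ (which lie in $\Xi_{n}\subset\Xi_{n+2}$) are not adjacent in $\Xi_{n+2}$, the cell $I$ has a neighbouring level-$(n+2)$ cell $I'$ contained in $J$ and sharing an endpoint with $I$. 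By Lemma~\ref{dyn-cell} each of $I$ and $I'$ is a union of one or two adjacent intervals of the level-$(n+2)$ dynamical partition, so applying Theorem~\ref{SH} only to adjacent intervals yields $|I'|\ge c(f)\,|I|$, whence $|I|\le |J|/(1+c)=\delta|J|$ with $\delta<1$ depending only on $f$. With that substitution your argument is complete and coincides with the paper's intended proof.
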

\begin{proof}
Let $I$ be an interval component of  ${\Bbb T} \setminus \Xi_{n+2}$.
Let $S$ and $J$ be the interval components of ${\Bbb T} \setminus
\Xi_{n+1}$  and ${\Bbb T} \setminus \Xi_{n}$ respectively  such that
$I \subset S \subset J$.

By Lemma~\ref{dyn-cell}, we have two cases. In the first case, $J$
is the union of two adjacent interval components of $\Bbb T
\setminus \Pi_n$.  In the second case, $J$ is  an interval component
of $\Bbb T \setminus \Pi_n$.

Suppose we are in the first case, that is, $J$ is the union of two
adjacent interval components of $\Bbb T \setminus \Pi_n$. Let us
denote these two adjacent interval components by $L$ and $R$. Since
$\Xi_{n+1} \supset \Pi_n$,  $S$ is contained either in $L$ or in
$R$. By Theorem~\ref{SH}, $|L| \asymp |R|$. So there is a uniform $0
< \delta < 1$ such that
$$
|I| \le |S| \le \max\{|L|, |R|\} < \delta |J|.
$$
The lemma follows in the first case.

Now suppose we are in the second case, that is,  $J$ is an interval
component of $\Bbb T \setminus \Pi_n$.  By the definition of $\Pi_n$
and $\Xi_n$,  it follows that $J = I_{n}^{i}$ for some $0 \le i <
q_{n+1} - q_n$.  Then    $J  = [x_i, x_{i + q_{n}}]$. Note  that
$x_{i + q_{n} + q_{n+1}}$ belongs to the interior of $[x_i,
x_{i+q_n}] = J$.  Since
$$
0 < i + q_{n} + q_{n+1} < q_{n+1} - q_n  + q_{n} + q_{n+1} = 2
q_{n+1} <  q_{n+2}+ q_{n+1} \le q_{n+3},
$$
it follows that
$$
x_{i + q_{n} + q_{n+1}} \in \Pi_{n+1} \subset \Xi_{n+2}.
$$
Note that  $I$ is an interval component of $J \setminus \Xi_{n+2}$.
Since $\Pi_{n+1} \subset \Xi_{n+2}$,  $I$ is contained in an
interval component of $J \setminus \Pi_{n+1}$. Since $x_{i + q_{n} +
q_{n+1}}$ belongs to the interior of $[x_i, x_{i+q_n}] = J$ and
$x_{i + q_{n} + q_{n+1}} \in \Pi_{n+1}$, there are at least two
interval components in $J \setminus \Pi_{n+1}$. Let us label these
interval components by order as $$J_{1}, \cdots, J_{l},$$ where $l
\ge 2$ is some integer. By Theorem~\ref{SH}, we have
$$
|J_{j}| \asymp |J_{j+1}|
$$
holds for all $1 \le j \le l-1$. It follows that there is a uniform
$0 < \delta < 1$ such that $|J_{j}| < \delta |J|$ for all $1 \le
j\le l$. Since $I$ is contained in  $J_{j}$ for some $1 \le j \le
l$, the lemma  follows in the second case. The proof of
Lemma~\ref{geom} is completed.
\end{proof}

 Recall that for $i = 1, 2, \cdots$, we use $x_i$ to denote the
backward iterate $f^{-i}(1)$ and $x_{-i}$ to denote the forward
iterate $f^{i}(1)$.
\begin{lem}\label{real bound}
Let $v = x_{-1} = f(1)$ denote the critical value of $f$.  Then

\begin{itemize}
\item[1.] $|[x_{q_{n+1}-1}, v]|\asymp |[v, x_{q_{n}+q_{n+1}-1}]|\asymp |I_{n}^{q_{n+1}-1}|$,
\item[2.] if  $I$ is an interval in the dynamical partition or cell partition of
of level $n$ such that  $v \notin I$, then  ${\rm dist}(v, I)
\succeq |I|$.
\end{itemize}
\end{lem}
\begin{proof}
 Note that $I_{n}^{q_{n+1} -1} = [x_{q_{n+1}-1}, v] \cup [v, x_{q_{n}+q_{n+1}-1}]$. Since  $[x_{q_{n+1}-1}, v] = f(I_{n+1})$ and
$[v, x_{q_{n}+q_{n+1}-1}] = f([1, x_{q_{n}+q_{n+1}}])$, to prove the
first assertion, it suffices to prove $|I_{n+1}|  \asymp |[1,
x_{q_{n}+q_{n+1}}]|$.    But this is obvious since $I_{n+2} \subset
[1, x_{q_{n}+q_{n+1}}] \subset I_{n}$ and $|I_{n+2}| \asymp
|I_{n+1}|\asymp |I_n|$ (cf. Theorem~\ref{SH}). This proves the first
assertion of Lemma~\ref{real bound}.

Now let us prove the second assertion. First suppose $I$ is an
interval of the dynamical partition of level $n$. There are two
cases. In the first case, $I$ and $I_{n}^{q_{n+1}-1}$ are adjacent
to each other. Then
$$
{\rm dist}(v, I) = \min\{|[x_{q_{n+1}-1}, v]|, |[v,
x_{q_{n}+q_{n+1}-1}]|\}.
$$
This, together with the first assertion we just proved, implies that
${\rm dist}(v, I) \asymp |I_{n}^{q_{n+1}-1}|$. Since $I$ and
$I_{n}^{q_{n+1}-1}$ are adjacent to each other by assumption, we
have $|I| \asymp |I_{n}^{q_{n+1}-1}|$. Thus ${\rm dist}(v, I)
\asymp|I|$.  In the second case, $I$ is not adjacent to
$I_{n}^{q_{n+1}-1}$. Let $S$ be the smaller component of $\Bbb T
\setminus (I \cup I_{n}^{q_{n+1}-1})$. Let $H$ denote the interval
in the dynamical partition of level $n$ which is adjacent to $I$ and
is contained in $S$. Then
$$
{\rm dist}(v, I) \succeq |S| \ge H \asymp |I|.
$$
This proves the second assertion in the case that $I$ is an interval
of the dynamical partition of level $n$.

Now suppose  $I$ is an interval of the cell partition of level $n$.
By Lemma~\ref{dyn-cell}, $I$ is either an interval in the dynamical
partition of level $n$ or is the union of two adjacent intervals of
dynamical partition of level $n$. The first case has just been
proved. Suppose it is the second case. That is, $I$ is the union of
two adjacent intervals in the dynamical partition of level $n$, say
$R$ and $S$. Then
$$
{\rm dist}(v, I) = \min\{{\rm dist}(v, R), {\rm dist}(v, S)\}.
$$
Since $R$ and $S$ are intervals of dynamical partition of level $n$,
we have  ${\rm dist}(v, R) \succeq |R|$ and ${\rm dist}(v, S)
\succeq |S|$. Since $|R| \asymp |S| \asymp |I|$, from the above
equation we get  ${\rm dist}(v, I) \succeq |I|$. This proves the
second assertion.

 The proof of Lemma~\ref{real bound} is completed.
\end{proof}

%***************************************************************************
%***************************************************************************

\section{The  premodel $G_{\theta}$ and the immediate premodel $g_{\theta}$}

In this section, we construct a premodel for $f_\theta$. The idea of
such type of construction was pioneered by A. Cheritat (see
\cite{C1}). Recall that $\Delta$ and $\Bbb T$ denote the unit disk
and the unit circle, respectively.

Let $T(z) = \sin(z)$. It follows that the map $T(z)$ has exactly two
critical values $1$ and $-1$. Let $D$ be the component of
$T^{-1}(\Delta)$ which contains the origin.  Let $$ R = \{x +
iy\:|\: -\pi/2 < x < \pi/2\:\hbox{  and } -100 < y < 100\}.
$$
\begin{lem}\label{basic domain}
$D$ is a Jordan domain which is symmetric about the origin and the
map $T|\partial D: \partial D \to {\Bbb T}$ is a homeomorphism.
Moreover, $D \subset R$ and $\partial D \cap \partial R = \{-\pi/2,
\pi/2\}$. In particular, $\pi/2$ and $-\pi/2$ are the only two
critical points of $T$ which are contained in $\partial D$.
\end{lem}
\begin{proof}
Let $K = (e^{100} + e^{-100})/2$.  Let us study what $T(R)$ looks
like. When $z$ runs through the straight segment $[\pi/2 + 100 i,
\pi/2]$ from $\pi/2 + 100 i$ to $\pi/2$, $T(z)$ runs through the
straight segment $[K, 1]$ from $K$ to $1$. When $z$ runs through the
straight segment $[\pi/2, \pi/2 - 100 i]$, $T(z)$ runs through the
straight segment $[1,  K]$ from $1$ to $K$. When $z$ runs through
the straight segment  $[\pi/2 - 100i, -\pi/2 - 100i]$ from $\pi/2 -
100i$ to $-\pi/2 - 100i$, $T(z)$ runs through a continuous curve
segment $\Gamma_{-}$ in the lower half plane where
$$
\Gamma_{-} = \{ \sin (t - 100i)\:|\: \pi/2 \ge   t \ge -\pi/2\}
$$
looks like the lower half circle $\{z\:|\: |z| = K \hbox{ and } 0\ge
\arg(z) \ge -\pi\}$. Note that for $\pi/2 \ge t_1 > t_2 \ge -\pi/2$,
$$\sin(t_1 - 100i) - \sin(t_2 - 100 i) = 2 \sin (\frac{t_1 -
t_2}{2}) \cos(\frac{t_1+ t_2}{2} - 100 i).$$  Since $0< \frac{t_1 -
t_2}{2} \le \pi/2$, the first factor on the right hand of the above
equation is not equal to zero. It is clear that the  absolute value
of the second factor is large. Thus $\sin(t_1 - 100i) \ne \sin(t_2 -
100 i)$. So the curve segment $\Gamma_-$ does not intersect with
itself.

Note that  $T$ is odd. So  when $z$ runs through the straight
segment $[-\pi/2 - 100 i, -\pi/2]$, $T(z)$ runs through the straight
segment $[-K, -1]$ from $-K$ to $-1$; and when $z$  runs through the
straight segment $[-\pi/2, -\pi/2 + 100 i, ]$, $T(z)$ runs through
the straight segment $[ -1, -K]$ from $-1$ to $-K$; and when  $z$
runs through the straight segment  $[-\pi/2 + 100i, \pi/2 + 100i]$
from $-\pi/2 + 100i$ to $\pi/2 +100i$, $T(z)$ runs through a
continuous curve segment $\Gamma_{+}$ in the upper half plane where
$$
\Gamma_{+} = \{ \sin (t + 100i)\:|\:  -\pi/2  \le   t \le \pi/2\}
$$
looks like the upper half circle $\{z\:|\: |z| = K \hbox{ and } \pi
\ge  \arg(z) \ge 0\}$ and does not intersect with itself.

Let  $\Gamma = \Gamma_{+} \cup \Gamma_{-}$. Then $\Gamma$ is a
Jordan curve which is like the Euclidean circle  with center at the
origin and  radius $K$. Let $\Omega$ be the domain bounded by
$\Gamma$ and containing the origin. Let
$$
\Sigma = \Omega \setminus ([1, K] \cup [-K, -1]).
$$
From the above argument, we see that when $z$ runs through $\partial
 R$ one time, $T(z)$ runs through $\partial \Sigma$ one time also.
 It follows that
 $$
 T: R \to \Sigma
 $$
 is a holomorphic homeomorphism. Since $\Delta \subset \Sigma$ with
  $ \partial \Delta \cap \partial \Sigma=\{1, -1\}$,
 there is a Jordan domain, say $D \subset R$,  such that $T: D \to
 \Delta$ is a homeomorphism, and moreover,  $$\partial D \cap \partial R=\{\pi/2, -\pi/2\}.$$  Since $T(0) = 0$, we have   $0 \in D$. Since $T$ is odd and $\Delta$ is symmetric about the origin,
 $D$ must be symmetric about the origin, that is, $D = -D$.    The holomorphic isomorphic  $T: D \to \Delta$ can be homeomorphically extended to $\partial D$. Thus
  $T: \partial D
 \to \partial \Delta$ is a homeomorphism. Since $D \subset R$,   $\pi/2$ and $-\pi/2$
 are  the only critical points of $T$ which are contained in
 $\partial D$.  The proof of Lemma~\ref{basic domain} is completed.
\end{proof}

For $k \in \Bbb Z$, let $D_{k} = \{z+k\pi\big{|} z \in D\}$  and
$R_{k} =  \{z+k\pi\big{|} z \in R\}$.  Then  $D_{0} = D$, $R_{0} =
R$ and $D_k \subset R_k$ for all $k \in \Bbb Z$.   As a direct
consequence of Lemma~\ref{basic domain} we have
\begin{cor}\label{dis-con}
The domains $D_{k}, k \in {\Bbb Z}$,  are all the components of
$T^{-1} (\Delta)$. For any $k \in \Bbb Z$, $\partial D_{k} \cap
\partial D_{k+1} = \{k \pi+ \pi/2\}$, and moreover,  $\partial D_{i}
\cap
\partial D_{j}= \emptyset$ for all $i, j \in {\Bbb Z}$ with $|i - j| >
1$.
\end{cor}

Let $\psi: \widehat{\Bbb C} - \overline{\Delta} \to \widehat{\Bbb C} - \overline{D}$
be the Riemann map such that $\psi(\infty) = \infty$ and $\psi(1) = \pi/2$. Since $\Delta$
and $D$ are both symmetric about the origin, we have
\begin{lem}\label{odd}
$\psi$ is odd.
\end{lem}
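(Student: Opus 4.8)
The plan is to exploit the central symmetry of both $\Delta$ and $D$. Write $N(z)=-z$. Since $\Delta$ is symmetric about the origin, $N$ restricts to a conformal automorphism of $\widehat{\Bbb C}\setminus\overline{\Delta}$ fixing $\infty$; and since $D$ is symmetric about the origin (part of Lemma~\ref{basic domain}), $N$ likewise restricts to a conformal automorphism of $\widehat{\Bbb C}\setminus\overline{D}$ fixing $\infty$. Consequently the map
$$
\chi \;=\; N\circ\psi\circ N,\qquad \chi(z)=-\psi(-z),
$$
is again a conformal isomorphism from $\widehat{\Bbb C}\setminus\overline{\Delta}$ onto $\widehat{\Bbb C}\setminus\overline{D}$ with $\chi(\infty)=\infty$. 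Hence $R:=\psi^{-1}\circ\chi$ is a conformal automorphism of $\widehat{\Bbb C}\setminus\overline{\Delta}$ fixing $\infty$.

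Next I would identify $R$. Conjugating by the inversion $z\mapsto 1/z$, which maps $\widehat{\Bbb C}\setminus\overline{\Delta}$ conformally onto $\Delta$ and sends $\infty$ to $0$, turns $R$ into a conformal automorphism of $\Delta$ fixing $0$; such a map is a rotation, so $R(z)=cz$ for some $c$ with $|c|=1$.

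It then remains to show $c=1$. From $\chi=\psi\circ R$ we obtain the functional identity $-\psi(-z)=\psi(cz)$ for all $z\in\widehat{\Bbb C}\setminus\overline{\Delta}$. Replacing $z$ by $-z$ gives $-\psi(z)=\psi(-cz)$, and replacing $z$ by $cz$ in the original identity gives $-\psi(-cz)=\psi(c^{2}z)$; combining the last two yields $\psi(z)=\psi(c^{2}z)$, so $c^{2}=1$ because $\psi$ is injective. If $c=-1$ the identity reads $-\psi(-z)=\psi(-z)$, forcing $\psi\equiv 0$, which is absurd; hence $c=1$ and the identity becomes $\psi(-z)=-\psi(z)$, i.e. $\psi$ is odd.

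I do not expect a serious obstacle here; the only point worth stating carefully is the reduction of $R$ to a genuine rotation via the inversion $z\mapsto 1/z$ (so that no boundary regularity of $\psi$ is needed). It is also worth noting that the normalization $\psi(1)=\pi/2$ plays no role in the oddness itself — it only serves elsewhere to pin $\psi$ down uniquely — although as a consistency check the argument also gives $\psi(-1)=-\psi(1)=-\pi/2$, matching the fact that $-1$ and $-\pi/2$ are the ``antipodal'' marked boundary points of $\Bbb T$ and $\partial D$.
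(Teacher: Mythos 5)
Your proof is correct. The paper gives no argument for this lemma at all---it records it as an immediate consequence of the central symmetry of $\Delta$ and $D$---and your write-up is exactly that standard uniqueness-up-to-rotation argument made explicit: the step forcing $c^{2}=1$ from the identity $-\psi(-z)=\psi(cz)$ and excluding $c=-1$ (since $\psi$ omits $0$, being a homeomorphism onto $\widehat{\Bbb C}\setminus\overline{D}$) is valid, and your remark that the normalization $\psi(1)=\pi/2$ plays no role is right; alternatively one gets $c=1$ in one line by comparing the leading Laurent coefficients of $-\psi(-z)$ and $\psi(cz)$ at $\infty$.
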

For $z \in \widehat{\Bbb C}$, let $z^{*}$ denote the symmetric image
of $z$ about the unit circle. That is, $z^{*} = 1/\bar{z}$ if  $z
\ne 0$ and $z^{*} = \infty$ if $z = 0$. Define
\begin{equation}\label{ff-rr}
G(z) =
\begin{cases}
 T \circ \psi(z) & \text{for $z \in {\Bbb C} - \Delta$}, \\
[(T \circ \psi)(z^{*})]^{*} & \text{ for $z \in \Delta -\{0\}$}.

\end{cases}
\end{equation}

From (\ref{ff-rr}) it follows that $G$ is   meromorphic in ${\Bbb
C}^{*}$ and has exactly  two essential singularities at $0$ and
$\infty$.  Let us show that  $G$ has no asymptotic values.  If this
were not true, then there would be a curve $\gamma: [0, \infty) \to
{\Bbb C}^{*}$ such that $\gamma(t) \to 0$ or $\infty$ and
$G(\gamma(t))$ converges to some complex number. Since $G$ is
symmetric about the unit circle, we may assume that $\gamma(t) \to
\infty$ as $t \to \infty$. Since $\psi$ maps $\infty$ to $\infty$,
it follows that $T$ has an asymptotic value. But this is a
contradiction since $T$ has no asymptotic values (cf. Lemma 1 of
\cite{Zh1}). So $G$ has no asymptotic values.

From (\ref{ff-rr}) it follows that $G$ is holomorphic in $\Bbb C
\setminus \overline{\Delta}$ and thus has no poles in $\Bbb C
\setminus \overline{\Delta}$.

Let us  determine the set of  critical points of $G$ as follows.
Note that the set of critical points of $T$ is given by $\{k\pi +
\pi/2\:|\: k \in \Bbb Z\}$, and except  $\pi/2$ and $-\pi/2$ which
are contained in $\partial D$, all the other critical points of $T$
are contained in $\Bbb C \setminus \overline{D}$.  So the set
$$
\{\psi^{-1}(k\pi + \pi/2)\:|\: k\ne 0, -1, k \in \Bbb Z\}
$$
contains all the critical points of $G$ in $\Bbb C \setminus
\overline{\Delta}$, and its symmetric image, the set
$$
\{[\psi^{-1}(k\pi + \pi/2)]^{*}\:|\: k\ne 0, -1, k \in \Bbb Z\}
$$
contains  all the critical points of $G$ in $\Delta \setminus
\{0\}$. Since $1 = \psi^{-1}(\pi/2)$ and $-1 = \psi^{-1}(-\pi/2)$,
it follows that $-1$ and $1$ are the only two critical points of $G$
in the unit circle. From (\ref{ff-rr}) it follows that $G: \Bbb T
\to \Bbb T$ is an analytic homeomorphism. Let us summarize these
into the following lemma.

\begin{lem}\label{G}
$G: \Bbb C^{*} \to \widehat{\Bbb C}$  is an odd meromorphic function
such that
\begin{itemize}
\item[1.] $G$ is symmetric about the unit circle, that is,
 $G (z^{*}) = [G(z)]^{*}$  for all $z \in \Bbb C^{*}$, \item[2.] $G$ has two essential singularities at  $0$ and
$\infty$  and has no asymptotic values, and moreover, $G$ has no
poles outside the unit disk,

\item[3.] the set of the critical points of $G$ in $\Bbb C \setminus \overline{\Delta}$
 is given by $ \{\psi^{-1}(k\pi + \pi/2)\:|\: k\ne 0, -1, k \in
\Bbb Z\}$, and the set of the critical points of $G$ in $\Delta
\setminus \{0\}$  is given by  $ \{[\psi^{-1}(k\pi +
\pi/2)]^{*}\:|\: k\ne 0, -1, k \in \Bbb Z\}$. The only two critical
points of $G$ in the unit circle are $-1$ and $1$ both of which are
double critical points,
\item[4.] $G(1) = 1$ and $G(-1) = -1$ are the only two critical
values of $G$, that is, all the other critical points of $G$ are
mapped either to $1$ or to $-1$,
\item[5.]  $G|{\Bbb T}: {\Bbb T}\to {\Bbb T}$ is a
real analytic circle homeomorphism.
\end{itemize}
\end{lem}

Let $0< \theta< 1$ be the David type irrational number in Theorem 1.
Since $G|{\Bbb T}: {\Bbb T} \to {\Bbb T}$ is a critical circle
homeomorphism, by Proposition 11.1.9 of \cite{KH}, we get
\begin{lem}\label{exist}
There exists a unique $t \in [0, 1)$ such that $e^{2 \pi it} \cdot
G|{\Bbb T}: {\Bbb T}\to {\Bbb T}$ is a critical circle homeomorphism
of rotation number $\theta$.
\end{lem}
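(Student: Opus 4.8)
The plan is to apply the quoted result (Proposition 11.1.9 of \cite{KH}) on the monotone dependence of the rotation number in an analytic family of circle homeomorphisms. Consider the one-parameter family $F_t = e^{2\pi i t} \cdot (G|{\Bbb T})$ for $t \in {\Bbb R}/{\Bbb Z}$. First I would note that each $F_t$ is again a real-analytic orientation-preserving homeomorphism of ${\Bbb T}$ with exactly two critical points (at $1$ and $-1$), since precomposition by a rigid rotation is irrelevant and postcomposition by the rigid rotation $R_t$ is an analytic diffeomorphism, so the critical set of $G|{\Bbb T}$ is unchanged; in particular $F_0 = G|{\Bbb T}$. The family $t \mapsto F_t$ is (real-)analytic, hence continuous, in the appropriate topology, so the rotation number function $\rho(t) = \operatorname{rot}(F_t)$ is well-defined.

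Next I would invoke the standard facts about $\rho$: it is continuous and (weakly) monotone non-decreasing in $t$, and it satisfies the cocycle-type normalization $\rho(t+1) = \rho(t) + 1$ when lifted to ${\Bbb R}$ (adding a full turn to the rotation part raises the rotation number by exactly one). Hence, viewing $\rho$ as a degree-one monotone circle map of $t$, its image covers all of ${\Bbb R}/{\Bbb Z}$; in particular there exists $t \in [0,1)$ with $\rho(t) = \theta$. This gives existence of the required $t$.

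For uniqueness, the key point is that $\theta$ is irrational. The only way $\rho$ could take the value $\theta$ on more than one $t$ is if $\rho$ were constant equal to $\theta$ on a nondegenerate subinterval of parameters (the "mode-locking" plateaus of the devil's-staircase picture). But plateaus of the rotation number in a monotone analytic family occur only at rational values of $\rho$: on such a plateau $F_t$ has a periodic orbit, which forces $\rho(t) \in {\Bbb Q}$. Since $\theta \notin {\Bbb Q}$, the level set $\rho^{-1}(\theta)$ is a single point, giving uniqueness of $t \in [0,1)$.

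The only mild subtlety — and the one place I would be careful — is making sure that the cited Proposition 11.1.9 of \cite{KH} is stated for families of circle \emph{homeomorphisms} (not necessarily diffeomorphisms), so that it applies directly to $F_t$, which is merely a homeomorphism at the two critical points; but since the rotation number and its monotonicity/plateau structure depend only on the homeomorphism structure, this causes no real difficulty, and the lemma follows.
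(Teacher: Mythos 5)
Your proposal is correct and is essentially the paper's own argument: the paper proves this lemma simply by invoking Proposition 11.1.9 of \cite{KH}, i.e.\ the continuity, monotonicity, and degree-one behaviour of $t \mapsto \mathrm{rot}(e^{2\pi i t}G|{\Bbb T})$ together with strict monotonicity (no plateaus) at irrational values, which is exactly what you spell out. The only minor quibble is that your justification of the no-plateau step (``a plateau forces a periodic orbit'') is phrased in the reverse logical direction --- the standard fact is that an irrational rotation number forces strict monotonicity in the parameter --- but since this is precisely the content of the cited proposition, it is not a genuine gap.
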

Let $t \in [0, 1)$ be the number given in Lemma~\ref{exist}. Let us
define a meromorphic function $G_{\theta}: \Bbb C^{*} \to
\widehat{\Bbb C}$ by setting
\begin{equation}\label{ff-ll}
G_{\theta}(z) = e^{2 \pi it} \cdot G(z) \quad \forall z \in \Bbb C
^{*}.
\end{equation}

 By Yoccoz's linearization theorem, it follows that
 $G_{\theta}|\Bbb T: \Bbb T \to \Bbb T$ is homeomorphically conjugate to
 the rigid rotation $z \mapsto e^{2 \pi i \theta} z$.
 As we mentioned in the Introduction of
the paper, since $G_{\theta}$ has two critical points on the unit
circle,  Yoccoz's cell construction, which is presented in the
appendix of \cite{PZ}, may not be used directly to extend the
conjugacy map to a David homeomorphism of the  unit disk to itself.
To  avoid this problem, we will introduce an immediate premodel map
$g_{\theta}$ as follows.

Let $\Phi: {\Bbb C} \to {\Bbb C}$ be the square map given by
$\Phi(z) = z^{2}$.  Note that for each $z$ in $\Bbb C$,  since
$G_{\theta}$ is odd, the value $\Phi \circ G_{\theta} \circ
\Phi^{-1}(z)$ does not depend on the choice of the branch of
$\Phi^{-1}$ at $z$. Thus we can define a meromorphic function
$g_{\theta}: \Bbb C^{*} \to \widehat{\Bbb C}$  by setting
\begin{equation}\label{easier-m}
g_{\theta}(z) =  \Phi \circ G_{\theta} \circ \Phi^{-1}(z).
\end{equation}
\begin{lem}\label{rotation number}
$g_{\theta}$ is a meromorphic function such that
\begin{itemize}
\item[1.] $g_{\theta}$ is symmetric about the unit circle, that is,
 $g_\theta (z^{*}) = [g_\theta(z)]^{*}$  for all $z \in \Bbb C^{*}$,
 \item[2.] $g_{\theta}$ has two essential singularities at  $0$ and
$\infty$  and has no asymptotic values, and moreover, $g_{\theta}$
has no poles outside the unit disk, \item[3.] a point $\omega \in
\Bbb C^{*}$ is a critical point of $g_\theta$  if and only if it is
the  $\Phi$-image of either a critical point, or a pole, or a zero
of $G$,

\item[4.] the point $1$ is the only critical point of $g_{\theta}$
in the unit circle, and it is a double critical point. Moreover,
$g_\theta$ has exactly three critical values: $0$, $\infty$ and
$g_\theta(1)$. In particular, $g_{\theta}(1)$ is the unique critical
value of $g_{\theta}$ in $\Bbb C^{*}$,
\item[5.]
$g_{\theta}|\Bbb T: \Bbb T \to \Bbb T$ is a critical circle
homeomorphism with rotation number  $\tau \equiv 2 \theta \mod(1)$.
\end{itemize}
\end{lem}
\begin{proof}
The first four assertions follow  directly from Lemma~\ref{G} and
the construction of $g_{\theta}$.  The only point one needs to think
about is that the $\Phi$-images of the zeros and poles of
$G_{\theta}$ are critical points of $g_{\theta}$ (This is because
$0$ and infinity are critical points of the square map $\Phi$).

  Let us prove the last assertion. Let $\eta: \Bbb T \to \Bbb T$ be the
circle homeomorphism such that $\eta(1) = 1$ and $\eta \circ
(G_{\theta}|\Bbb T) \circ \eta^{-1} = R_{\theta}$. Since
$G_{\theta}$ is odd,  it follows that $\eta$ is odd also.

Let $I$ denote the anticlockwise arc from $1$ to $g_{\theta}(1) =
(G_{\theta}(1))^{2}$. Let  $$\Lambda_{n}  = \{g_{\theta}^{k}(1) =
(G_{\theta}^{k}(1))^{2},\:\: 0 \le k \le n\} \hbox{  and  } \Psi_{n}
= \{G_{\theta}^{k}(1),\:\: 0 \le k \le n\}.$$ Let $P_{n}$ denote the
numbers of the points in $\Lambda_{n}$  which are contained in $I$.
Since  $R_{\theta}: \Bbb T \to \Bbb T$ preserves the Lebesgue
measure and is uniquely ergodic (cf. Proposition 4.2.1 of
\cite{KH}), we have
$$
\lim_{n\to \infty}P_n/n = \tau.
$$
Our proof is divided into  two cases.

 In the
first case, $0< \theta < 1/2$.    Let $J$ denote the anticlockwise
arc from $1$ to $G_{\theta}(1)$  and  $-J$ denote the anticlockwise
arc from $-1$ to $-G_{\theta}(1)$.  Then $\eta(J)$ is the
anticlockwise arc from $1$ to $e^{2 \pi i \theta}$ and $\eta(-J)$ is
the anticlockwise arc from $-1$ to $-e^{2 \pi i \theta}$. Let
$Q_{n}^{+}$ and $Q_{n}^{-}$ denote the numbers of the points in
$\Psi_{n}$ which are contained in $J$ and $-J$, respectively. Again
since $R_{\theta}: \Bbb T \to \Bbb T$ preserves the Lebesgue measure
and is uniquely ergodic, we have
$$
\lim_{n\to \infty} Q_{n}^{+}/n = \lim_{n\to \infty} Q_{n}^{-}/n =
\theta.
$$   From the assumption that  $0< \theta < 1/2$,
it follows that  $\eta(J)$, which is the anticlockwise arc from $1$
to $e^{2 \pi i \theta}$,  is properly contained in the upper half of
the unit circle. Since $\eta(1) = 1$ and $\eta(-1) = -\eta(1) = -1$,
$\eta$ preserves the upper half of the unit circle. Thus $J$ is
properly contained in the upper half of the unit circle. In
particular, this implies
$$J \cap (-J) = \emptyset.$$
 Note that $$g_{\theta}^{k}(1) =  (G^{k}_{\theta}(1))^{2} \in I \hbox{  if and only if
}G_{\theta}^{k}(1) \in J \cup (-J)$$ Since $J \cap (-J) =
\emptyset$, we have $P_{n} = Q_{n}^{+} + Q_{n}{-}$ and
$$
\tau = \lim_{n\to \infty} P_{n}/n = \lim_{n\to \infty} [(Q_{n}^{+}+
Q_{n}^{-}]/n = 2\theta.
$$

In the second case, $1/2< \theta < 1$. Again  let $J$ denote the
anticlockwise arc from $1$ to $G_{\theta}(1)$  and $Q_{n}^{+}$
denote the numbers of the points in $\Psi_{n}$ which are contained
in $J$. Similarly, we have
$$
\lim_{n\to \infty} Q_{n}^{+}/n  = \theta > 1/2.
$$  Since $1/2 < \theta < 1$,
 $e^{2 \pi i \theta}$ belongs to the lower half of the unit circle.
 Since $G_{\theta}(1) = \eta^{-1}(e^{2 \pi i \theta})$  and $\eta$ preserves the
 lower half of the unit circle,
$G_{\theta}(1)$ is contained in the lower half of the unit circle.
Thus $-G_{\theta}(1)$ is contained in the upper half of the unit
circle. Let $S$ denote the anticlockwise arc from $1$ to
$-G_{\theta}(1)$ and $-S$ denote the anticlockwise arc from $-1$ to
$G_{\theta}(1)$.   Note that $S$ is a sub-arc of $J$ and $J -S$ is
the anticlockwise arc from $-G_{\theta}(1)$ to $G_{\theta}(1)$,
which is a half of the unit circle.  Let  $T_{n}^{+}$ and
$T_{n}^{-}$ denote the numbers of the points in $\Psi_{n}$ which are
contained in $S$ and $-S$,  respectively.  Let $D_{n}$ denote the
number of the points in $\Psi_{n}$ which are contained in $J-S$.
Since $\eta$ is odd,  $\eta$ maps $J-S$ to a half of the unit
circle. Again sine $R_{\theta}: \Bbb T \to \Bbb T$ preserves the
Lebesgue measure and is uniquely ergodic,  we have
$$
\lim_{n\to \infty}\frac{D_{n}}{n} = \frac{1}{2}.
$$  Since $T_{n}^{+} = Q_{n}^{+} - D_{n}$, we  have
$$
\lim_{n\to \infty} T_{n}^{+}/n = \lim_{n\to \infty} Q_{n}^{+}/n  -
\lim_{n\to\infty}D_{n}/n =  \theta - 1/2.
$$  Since $\eta$ is odd, $\eta(-S) = -\eta(S)$. So
$\eta(S)$ and $\eta(-S)$ have the same length. We thus have
$$
\lim_{n\to \infty} T_{n}^{-}/n  = |\eta(-S)| = |\eta(S)| =
\lim_{n\to \infty} T_{n}^{+}/n = \theta - 1/2.
$$
 Note that in the second case   $$g_{\theta}^{k}(1) =
(G^{k}_{\theta}(1))^{2} \in I \hbox{  if and only if
}G_{\theta}^{k}(1) \in S \cup (-S)$$  Since $S \cap (-S) =
\emptyset$, we have $P_{n} = T_{n}^{+} + T_{n}^{-}$.  We thus have
$$
\lim_{n\to \infty} P_{n}/n = \lim_{n\to \infty} [(T_{n}^{+}+
T_{n}^{-}]/n = 2\theta - 1.
$$
The lemma follows.

\end{proof}

%*******************************************************************************
%*******************************************************************************

\begin{lem}\label{arith}
 Let $0<
\theta < 1$ be an irrational number of David type. Let $0< \tau < 1$
be the irrational number such that
$$
\tau \equiv 2 \theta \mod(1).
$$
Then $\tau$ is also of David type.
\end{lem}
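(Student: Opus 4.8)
The plan is to reduce the statement to a comparison of the growth of the continued‑fraction denominators of $\theta$ and of $\alpha$. Write $p_n/q_n$ for the convergents of $\theta$, with partial quotients $a_n=a_n(\theta)$, and $p'_m/q'_m$ for the convergents of $\alpha$, with partial quotients $b_m=a_m(\alpha)$; recall that ``$\theta$ of David type'' means exactly $\log a_n=O(\sqrt n)$, so it suffices to show $\log b_m=O(\sqrt m)$. The one arithmetic input is the identity
$$\|q\alpha\|=\|2q\theta\|\qquad(q\in\Bbb Z),$$
valid because $\alpha-2\theta\in\Bbb Z$; here $\|x\|$ denotes the distance from $x$ to the nearest integer. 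I will also use two standard facts of the metric theory: (i) $\|q_n\theta\|\asymp 1/q_{n+1}$ and $\|q'_m\alpha\|\asymp 1/q'_{m+1}$, with absolute constants; and (ii) the record property $\|q\theta\|\ge\|q_n\theta\|$ whenever $1\le q<q_{n+1}$ (and likewise for $\alpha$).

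\emph{Step 1: the size‑matched estimate.} Fix $m$ and let $N=N(m)$ be the index with $q_N\le 2q'_m<q_{N+1}$. Since $\|q'_m\alpha\|=\|2q'_m\theta\|$ and $2q'_m<q_{N+1}$, the record property together with (i) gives $1/q'_{m+1}\asymp\|q'_m\alpha\|\ge\|q_N\theta\|\asymp 1/q_{N+1}$, so $q_{N+1}\succeq q'_{m+1}$. On the other hand $q'_{m+1}\ge b_{m+1}q'_m\ge\tfrac12 b_{m+1}q_N$. Combining, $q_{N+1}/q_N\succeq b_{m+1}$, whence $b_{m+1}\le K(a_{N+1}+1)$ for an absolute constant $K$. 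The same reasoning, now reading $\|q_n\alpha\|=\|2q_n\theta\|=2\|q_n\theta\|$ (the last step valid for $n$ large, since then $\|q_n\theta\|<\tfrac14$), yields the companion estimate $a_{n+1}\le K(b_{M+1}+1)$, where $M$ is the index with $q'_M\le q_n<q'_{M+1}$.

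\emph{Step 2: linearity of the index correspondence.} It remains to prove $N(m)=O(m)$; equivalently that the counting functions $N_\theta(X)=\#\{n:q_n\le X\}$ and $N_\alpha(X)=\#\{m:q'_m\le X\}$ are comparable. Suppose not: for a large absolute constant $L$ there is an $m$ with more than $Lm$ indices $n$ such that $q_n\le q'_m$. The assignment sending such an $n$ to the index $M$ with $q'_M\le q_n<q'_{M+1}$ is nondecreasing with values in $\{0,\dots,m\}$, so by pigeonhole at least $cL$ \emph{consecutive} of the $q_n$ (with $c>0$ absolute) lie in a single gap $[q'_{M^\ast},q'_{M^\ast+1})$. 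Their extreme ratio is $<b_{M^\ast+1}+1$, whereas $q_{n+2}\ge 2q_n$ forces that ratio to exceed $2^{\Omega(L)}$; hence $b_{M^\ast+1}$ is enormous. Plugging this into the companion estimate of Step 1 produces an enormous partial quotient $a_{N^\ast+1}$ of $\theta$ with $q_{N^\ast}\le 2q'_{M^\ast}<q'_{M^\ast+1}$; comparing sizes, $N^\ast$ lies (up to an additive $O(1)$) inside the block of consecutive indices just found, and then $q_{N^\ast+1}\ge a_{N^\ast+1}q_{N^\ast}\succeq b_{M^\ast+1}q'_{M^\ast}\asymp q'_{M^\ast+1}$ already overshoots the gap. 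Thus only $O(1)$ of the $q_n$ can lie in $[q'_{M^\ast},q'_{M^\ast+1})$ --- contradicting $cL$ once $L$ is chosen large. Hence $N_\theta(X)\preceq N_\alpha(X)$, and since $2q'_m<q'_{m+2}$ always, $N(m)\le N_\theta(q'_{m+2})\preceq N_\alpha(q'_{m+2})=O(m)$.

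\emph{Step 3: conclusion, and the main obstacle.} With $N(m)=O(m)$ and $\theta$ of David type, Step 1 gives, for all large $m$,
$$\log b_{m+1}\le\log a_{N(m)+1}+O(1)\le C\sqrt{N(m)+1}+O(1)=O(\sqrt m),$$
so $\alpha$ is of David type. The hard part is Step 2 --- the assertion that multiplication by $2$ distorts the continued‑fraction algorithm only at bounded speed. The delicate points there are the bookkeeping of the several absolute constants (so that the overshoot ``$q_{N^\ast+1}\asymp q'_{M^\ast+1}$'' is genuine rather than merely ``up to a factor $2$'') and the even/odd phenomenon concealed in the doubling identity: ``$\|2q'_m\theta\|$ small'' only says $q'_m\theta$ is near $\Bbb Z$ or near $\Bbb Z+\tfrac12$, and one must check this does not corrupt the record‑property comparisons. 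Both are handled by discarding finitely many small indices, which changes the counting functions by only an additive constant, irrelevant to the David condition.
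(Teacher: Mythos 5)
Your proof is correct, but it takes a genuinely different route from the paper's. The paper's key device is a parity trick (in its notation $t_l,b_l$ belong to $\theta$ and $q_k,a_k$ to $\alpha$, the reverse of yours): among $t_{n-1}$, $t_n$, $t_n-t_{n-1}$ there is an even number $2m$ which is a best-approximation denominator for $2\theta$, so $m$ is a denominator of $\alpha$; this forces the two denominator sequences to interleave within a bounded number of steps, and a partial quotient $a_k$ of $\alpha$ is then bounded by the telescoped ratio $t_{n+1}/t_{n-9}$, i.e.\ by a product of about ten consecutive factors $b_l+1$, which suffices for the $O(\sqrt n)$ bound. You instead exploit the identity $\|q\alpha\|=\|2q\theta\|$ together with $\|q_n\theta\|\asymp 1/q_{n+1}$ and the record property, which yields the sharper one-step comparison $b_{m+1}\preceq a_{N(m)+1}$ at size-matched indices; the index-linearity, which the paper extracts directly from the parity trick, you must then prove separately, and your pigeonhole-plus-doubling argument ($q_{n+2}\ge 2q_n$) does so: in any gap $[q'_{M},q'_{M+1})$ only boundedly many $q_n$ can occur, since those below $2q'_{M}$ are limited by doubling and those above are at least $q_{N^\ast+1}\ge q'_{M+1}/2$ by your Step 1 estimate. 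Your version buys a cleaner single-quotient bound and generalizes immediately to $\alpha\equiv D\theta \mod (1)$ for any fixed integer $D$, while the paper's parity argument is shorter because the interleaving comes out of it at once. Two small slips in your write-up, neither a real gap: in Step 2 the estimate you invoke should be the first estimate of Step 1 (large $b_{M^\ast+1}$ forces large $a_{N^\ast+1}$, with $q_{N^\ast}\le 2q'_{M^\ast}<q_{N^\ast+1}$), not the companion one, which runs in the wrong direction; and the chain $q_{N^\ast+1}\ge a_{N^\ast+1}q_{N^\ast}\succeq b_{M^\ast+1}q'_{M^\ast}$ tacitly assumes $q_{N^\ast}\succeq q'_{M^\ast}$, which is not justified as stated --- but the inequality you actually need, $q_{N^\ast+1}\succeq q'_{M^\ast+1}$, is already delivered directly by Step 1, so the contradiction, and hence the proof, stands.
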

\begin{proof}
Let $[b_{1}, \cdots, b_{n}, \cdots]$, $s_{n}/t_{n}$,  and $[a_{1},
\cdots, a_{n}, \cdots,]$, $p_{n}/q_{n}$, be the continued fractions
and convergents of $\theta$ and $\tau$, respectively. Let $n \ge 4$.
Note that if both $t_{n-1}$ and $t_{n}$ are odd, then   $t_{n} -
t_{n-1}$ must be even. So there is at least one even integer among
the three integers  $t_{n-1}$, $t_{n}$ and   $t_{n} - t_{n-1}$.   We
claim that there exists an even integer $L= 2m$ among $t_{n-1},
t_{n}$ and $t_{n} - t_{n-1}$ and an integer $N \ge 0$ such that the
inequality
\begin{equation}\label{arith ine}
|L \theta - N| = |2m \theta - N|< |2y \theta - x|
\end{equation}
holds for all integers $x \ge 0$ and  $0 < y < m$.

Let us prove the claim.  If one of $t_{n-1}$ and $t_{n}$ is even, we
can take $L$ to be it,  and  let  $N$  be $s_{n-1}$ if $L = t_{n-1}$
and be $s_{n}$ if $L = t_{n}$. Then the claim is obviously true.
Otherwise, both $t_{n-1}$ and $t_{n}$ are odd integers. Then let $L
= t_{n} - t_{n-1}$ and let $N \ge 0$ be the integer such that the
left hand of (\ref{arith ine}) obtains the minimum.  Note that
$t_{n} = t_{n-2} + b_{n} t_{n-1}$. If $b_{n} = 1$, then $L  = t_{n}
- t_{n-1}= t_{n-2}$. In this case, $N$ must be $s_{n-2}$ and the
claim is obviously true. Otherwise, we have $b_{n}
> 1$ and thus $t_{n} - t_{n-1}
> t_{n-1}$. Then the claim  follows  from the property of the closest returns: since   the only possible
integers $s$ and $t$ such that $t < t_{n} - t_{n-1}$ and $|(t_{n} -
t_{n-1}) \theta - N| \ge |t \theta - s|$ are $s_{n-1}$ and
$t_{n-1}$. But $t_{n-1}$ is odd and can not be equal to $2y$ for any
integer $y$, hence (\ref{arith ine}) also holds in the later case.
The claim has been proved.

From (\ref{arith ine}) and $\tau \equiv 2 \theta \mod(1)$ , it
follows that there exists some integer $M \ge 0$ such that
\begin{equation}\label{appro}
|m \tau - M| < |y \tau   - x|
\end{equation}
holds for all integers  $x \ge 0$ and  $0 < y < m$. This implies
that \begin{equation}\label{reed} m = q_{k_{0}}\end{equation} for
some $k_{0} \ge 0$.

For each $n \ge 5$, let $k$ be the unique integer such that
\begin{equation}\label{ch-k}
q_{k} < t_{n+1} \le q_{k+1}.
\end{equation}
Since $L$ is one of the three integers $t_n$, $t_{n-1}$ and $t_n -
t_{n-1}$,  we have
$$t_{n-2} \le L \le t_{n}.$$ We thus have  $$m = L/2 <
t_{n+1}.$$  From (\ref{ch-k}) it follows that  $k$ is the largest
integer such that $q_{k} < t_{n+1}$. Since $m = q_{k_{0}}$ for some
$k_{0} \ge 0$ by (\ref{reed}) and $q_{k_{0}}  = m < t_{n+1}$ by the
last inequality, it follows  from (\ref{ch-k})  that $k_{0} \le k$
and thus
\begin{equation}\label{oj}
m = q_{k_{0}} \le q_{k}. \end{equation} Since $L \ge t_{n-2}\ge
t_{n-4} + t_{n-3}
> 2 t_{n-4}$, we have $m = L/2
> t_{n-4}$.  This, together with (\ref{oj}), implies
\begin{equation}\label{mmjn}
q_{k} >  t_{n-4}.
\end{equation}
From (\ref{ch-k}) and (\ref{mmjn}) it follows that for every $n \ge
5$, there is some $q_{k}$ such that
\begin{equation}\label{rrtp}t_{n-4} < q_{k} < t_{n+1}.\end{equation}

Now for every $l \ge 1$,  let $n \ge 1$ be the least integer such
that $q_{l} < t_{n+1}$. It is clear that $n \ge 9$ for all $l$
large. Since for every $n \ge 5$, by (\ref{rrtp}), there is at least
one $q_{k}$ in $(t_{n-4}, t_{n+1})$, and since $n \ge 1$ is the
least integer such that $q_{l} < t_{n+1}$, it follows that
\begin{equation}\label{rlnl}
n  <  5l + 10.
\end{equation}
The constant $10$ in the right hand of the above inequality may not
be optimal. Again, by (\ref{rrtp}), there is some $q_{l'}$ with
$q_{l'} \in (t_{n-9}, t_{n-4})$. We thus have $q_{l'} < q_{l}$. This
means $l' \le l-1$ and thus $q_l' \le q_{l-1}$. So we have
$$
q_{l-1} \ge q_{l'} >   t_{n-9}.
$$
Therefore, we have
$$
a_{l} \le  q_{l}/q_{l-1}  <  t_{n+1}/t_{n-9}.
$$
This, together with (\ref{rlnl})  and the fact that $t_k/t_{k-1} <
b_k$,  implies that
$$
\log a_{l} <  \log (t_{n+1}/t_{n-9}) \le \sum_{n-8 \le k \le n+1}
\log b_{k}  \le C \sqrt n \le C' \sqrt l
$$
holds for all $l \ge 1$ large, where $C, C' > 0$ are some constants
depending only on $\theta$. The lemma follows.
\end{proof}

%*******************************************************************************
%*******************************************************************************
\section{Yoccoz's Cell Construction}
By Lemma~\ref{rotation number}, the map $g_{\theta}|\Bbb T: \Bbb T
\to \Bbb T$ is a critical circle homeomorphism with rotation number
$\tau$.  By Lemma~\ref{arith}, $\tau$ is of David type.  By Yoccoz's
linearization theorem, $g_{\theta}|\Bbb T$ is homeomorphically
conjugate to the rigid rotation $z \mapsto e^{2 \pi i \tau} z$. by
Yoccoz's extension theorem(see \cite{Yo2} or Theorem 6.5 of
\cite{PZ}), the conjugacy can be extended to a David homeomorphism
of the  unit disk to itself.   The construction of the extension is
based on  a partition of an inner half neighborhood of the unit
circle into cells.  The goal of this section is to sketch the
construction following Petersen-Zakeri's presentation in \cite{PZ}
with only some minor differences in terminologies.

 For $i \in {\Bbb Z}$, let $x_{i}$
denote the backward iterate $(g_{\theta}|\Bbb T)^{-i}(1)$, that is,
the point in $\Bbb T$ such that $g_{\theta}^{i}(x_{i}) = 1$. Let
$\{a_{1}, a_{2}, \cdots, a_{n}, \cdots \}$ denote the coefficients
of  the continued fraction of $\tau$. For $n \ge 0$, let
$p_{n}/q_{n}$ denote the $n$-th truncated continued fraction of
$\tau$. For any two distinct points $z, w \in \Bbb T$, let $d(z, w)$
denote the length of the smaller arc connecting $z$ and $w$.
Consider the cell partition of  $\Bbb T$  of level $n$  introduced
in $\S2$,
$$
\Xi_{n}  = \{x_{i} \: \big{|}\: 0\le i \le  q_{n+1} -1\}.
$$   Take
\begin{equation}\label{constant-n} N_{0} \ge 1\end{equation} large
enough such that for all $n \ge N_{0}$, $d(x_{i}, x_{j})< 1$ holds
for any two adjacent points $x_{i}$ and $x_{j}$ in $\Xi_{n}$.  From
now on we always assume that $n \ge N_0$.

 For each $x_{i} \in
\Xi_{n}$, let $y_{i}$ be the point on the radial segment $[0,
x_{i}]$ such that
$$
|y_{i}-x_{i}| = d(x_{r}, x_{l})/2
$$
where $x_{r}$ and $x_{l}$ denote the two  points immediately to the
right and left of $x_{i}$ in $\Xi_{n}$.

\begin{defi}[Yoccoz's cells]\label{b-s}{\rm
Let $x_{i}$ and $x_{j}$ be any two adjacent points in $\Xi_{n}$.
Connect $y_{i}$ and $y_{j}$ by a straight segment. Then the three
straight segments $[x_{i}, y_{i}]$, $[y_{i}, y_{j}]$, $[x_{j},
y_{j}]$,  and the  arc segment  $[x_{i}, x_{j}]$ bound a domain in
$\Delta$. We call the closure of this domain  a $\emph{cell of
level}$ $n$. }
\end{defi}

From Definition~\ref{b-s}, it follows that the union of all the
Yoccoz's cells of level $n$ is a  closed  annulus. Let us denote
this annulus by $Y_{n}$. From the construction we see that the outer
boundary component  of $Y_n$ is $\Bbb T$, and the  inner boundary
component of $Y_n$  is the union of finitely many straight segments,
and moreover,
$$
Y_{N_{0}}  \supset Y_{N_{0}+1} \supset \cdots \supset Y_n \supset
Y_{n+1} \supset \cdots .
$$
\begin{figure}
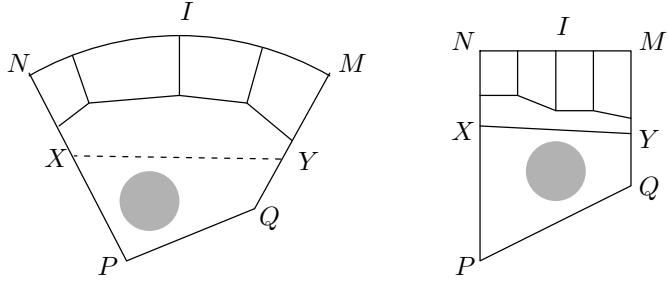

\bigskip
\begin{center}
\centertexdraw { \drawdim cm \linewd 0.02 \move(-2 1)

\move(4  -0.2) \lvec(6  -0.2) \lvec(6 -2)\lvec(4 -3)\lvec(4 -0.2)
\move(4.5 -0.2)\lvec(4.5 -0.8)\lvec(5 -1)\lvec(5 -0.2)\move(5.5
-0.2)\lvec(5.5 -1) \lvec(6 -1.1) \move(5.5 -1) \lvec(5 -1) \move(4.5
-0.8)\lvec(4 -0.8)  \move(4 -1.2)\lvec(6 -1.3)  \move(5 0)
\htext{$I$}

\move(3.65 -3.2) \htext{$P$} \move(6.1 -2.2) \htext{$Q$} \move(6.1
-0.2) \htext{$M$}  \move(6.1 -1.5) \htext{$Y$} \move(3.61 -1.4)
\htext{$X$} \move(3.61 -0.2) \htext{$N$}

\move(0 -4) \larc r: 4 sd:60 ed:120

\move(0   -0.8) \lvec(0.9   -0.9) \lvec(1.5 -1.4)

\move(1.1 -0.17) \lvec(0.9   -0.9)

\move(0 0) \lvec(0   -0.8) \lvec(-1.2   -0.9) \lvec(-1.6   -1.2)

\move(-0.4 -2.2) \fcir f:0.7 r:0.4

\move(5 -1.8) \fcir f:0.7 r:0.4

\move(-1.42 -0.26) \lvec(-1.2  -0.9)

\move(1 -2.3)    \lvec(2 -0.5)

\move(-0.7 -3)    \lvec(-2 -0.5) \move(-1.1 -3.2)
 \htext{$P$}  \move(1.05 -2.6)
 \htext{$Q$}

 \move(-0.7 -3) \lvec(1 -2.3)

\move(1.57   -1.8)   \htext{$Y$}

\move(-1.8   -1.75) \htext{$X$} \move(0   0.2) \htext{$I$}

\move(2.1 -0.5) \htext{$M$}

\move(-2.3 -0.5) \htext{$N$}

\lpatt(0.067 0.1)  \move(1.35 -1.64) \lvec(-1.4 -1.6)

 }
\end{center}
\caption{The geometry of the cells.  }
\end{figure}

 Let $E$ be a
cell of level $n$.  Let $I$ be the arc segment to which $E$ is
attached. That is, $I = E \cap \Bbb T$.  Suppose $P, Q, M$ and $N$
are the four vertices of $E$. When $n$ is large,  $I$ is small and
like a straight segment. The two radial  edges  of $E$  are
perpendicular to $I$. So we may regard $E$ as a trapezoid.  See
Figure 2 for an illustration.

In Corollary~\ref{c-g} and Lemma~\ref{geom}, let $f$ be
$g_\theta|\Bbb T$ and  let $1 < K < \infty$, $0< \delta < 1$ be the
constants there.  Let
$$
\sigma = \max\{\delta, \frac{K}{1 + K}\}. $$  In Figure 2 let $X \in
PN$ and $Y \in QM$ be the points such that
$$ |NX| = \sigma |PN| \hbox{  and  } |MY| = \sigma |QM|.$$ Then the straight segment $XY$ divide $E$ into two
trapezoids: $XYMN$ and $PQYX$.
\begin{lem}[Geometry of the cells]\label{good geometry} There exists
an $1 < L < \infty$ depending only on $g_{\theta}$ such that the
four edges of $E$ are $L$-commensurable. Moreover,  there is a
Euclidean disk $B$ such that (1) ${\rm diam}(B) \asymp |I|$, (2)
${\rm dist}(B, \Bbb T) \asymp |I|$ and (3) $B \cap Y_{n+2} =
\emptyset$.

\end{lem}
\begin{proof}
Note that when $E$ is large, $I$ is like a straight segment, and
thus  $E$ is like a trapezoid with $I$ being the top edge and the
two radial edges perpendicular to $I$. Let us first prove $PN$ and
$QM$ are $L$-commensurable to $I$ for some  $1< L < \infty$
depending only on $g_\theta$. It suffices to prove this for $PN$
since the argument applies to $QM$ also.  By definition, we have
\begin{equation}\label{fw1}
|PN| = \frac{1}{2}(|I| + |I'|)
\end{equation}
where $I'$ is one of the adjacent interval component of  $I$ in the
cell partition of level $n$. By Corollary~\ref{c-g}, $I$ and $I'$
are $K$-commensurable for some  $1< K< \infty$ depending only on
$g_{\theta}$.  From (\ref{fw1}) we have $$ \frac{2}{1 + K} |I| \le
\frac{1 + K^{-1}}{2} |I|  < |PN| < \frac{1+K}{2} |I|. $$ This
implies that $PN$ is $\frac{1+K}{2}$-commensurable with $I$.  Using
the same argument one can show that  $QM$ is
$\frac{1+K}{2}$-commensurable with $I$ also. Now let us consider
$PQ$.  Note that $I$ is like a straight segment for all $n$ large
enough and both $PN$ and $QM$ are perpendicular to $I$, we have
$$
{\rm dist}(PN, QM) \asymp |I|
$$
where ${\rm dist}(PN, QM)$ denotes the Euclidean distance between
$PN$ and $QM$. Since $|PQ| \ge {\rm dist}(PN, QM)$, we have
$$
|PQ|  \succeq |I|.
$$
On the other hand, we have
$$
|PQ|  < |PN| + |I| + |QM| \asymp |I|.
$$
This implies that $|PQ|\asymp|I|$.  This proves the first assertion
of the lemma by taking an $L > 1$ large enough.

Recall that for $n$ large,  $I$ is like a straight segment, and $PN$
and $QM$ are like two parallel straight segment both of which are
perpendicular to $I$.  So for $n$ large, $I$ is like the projection
of $PQ$ to the unit circle and thus $$|I| \asymp |PQ|
\cdot\sin(\angle NPQ) \asymp |PQ| \cdot \sin(\angle PQM).$$ Since
$|PQ| \asymp |I|$ by the first assertion of the lemma, the two
angles $\angle NPQ$ and $\angle PQM$ are bounded away from $0$ and
$\pi$. That is, there is a uniform $0< \eta < \pi$ such that
\begin{equation}\label{comp-f}
\eta < \angle NPQ, \angle PQM < \pi - \eta.
\end{equation}
Note that
\begin{equation}\label{comp-s}
|PX| = (1-\sigma) |PN| \asymp |I|\hbox{  and  } |QY| = (1-\sigma)
|QM|\asymp |I|.
\end{equation}
Since $PX$ and $QY$ are almost parallel to each other, by
(\ref{comp-f}), (\ref{comp-s})  and a compactness argument, we can
deduce that $PQYX$ contains a Euclidean disk, say $B$,  such that
${\rm diam}(B) \asymp {\rm diam} (PQYX) \asymp |I|$.   Since $PN$
and $QM$ are perpendicular to $I$, we have
$$
{\rm dist}(B, \Bbb T) \asymp {\rm dist}(B, I) \succeq \min\{|NX|,
|MY|\} \asymp |I|.
$$

It remains to prove that $Y_{n+2} \cap B= \emptyset$.  Since $B
\subset PQYX$, it suffices to prove $Y_{n+2} \cap PQYX = \emptyset$.
Let $E' \subset E$ be an arbitrary cell of level $n+2$. It suffices
to prove that all the four vertices of $E'$ are above the straight
segment $XY$. See the figure on the right hand of Figure 2 for an
illustration. Suppose $Z$ is a vertex of $E'$. We may assume that $Z
\notin I$ since otherwise there is nothing to prove. Then there are
two cases. In the first case, $Z$ lies in $PN$ or $QM$.  In the
second case, $Z$ is within the two radial segments $PN$ and $QM$.

Suppose we are in the first case. Let us only consider the case that
$Z \in PN$. The same argument will work for the case that $Z \in
QM$. Suppose $J$ is the adjacent interval of $I$ in the cell
partition of level $n$ such that they have a common end point $N$.
Let $I'$ and $J'$ be the two adjacent intervals in the partition of
level $n+2$ such that $I' \subset I$ and $J' \subset J$. Then  $N$
is  the common end point of $I'$ and $J'$. By the construction of
the cells, we have
\begin{equation}\label{dlp}
|NZ| = \frac{|I'|+ |J'|}{2}.
\end{equation}
Since $I' \subset I$ and $J' \subset J$,   we have  $|I'| < \delta
|I|$ and $|J'|< \delta |J|$ by Lemma~\ref{geom}. By the construction
of cells, we have
\begin{equation}\label{ddre}|PN| = \frac{|I|+|J|}{2}. \end{equation}
From (\ref{dlp}),  (\ref{ddre}) and the definition of $\sigma$, we
have
$$|NZ| < \delta |PN| \le \sigma|PN| = |NX|.$$
This proves the first case.

In the second case, $Z$ is within the two radial segments $PN$ and
$QM$.  Let $W$ be the interior point of $I$ such that $WZ$ is a
radial edge of $E'$. Let $I'$ and $I''$ be the two adjacent
intervals  in the cell partition of level $n+2$ so that $I'$ and
$I''$ have a common end point $W$. Then $I'$ and $I''$ are both
contained in $I$. So we get

\begin{equation}\label{kk1}
|WZ| = \frac{|I'| + |I''|}{2} \le \frac{|I|}{2}.
\end{equation}

Note that in (\ref{ddre})  we have $|J| > |I|/K$ by
Corollary~\ref{c-g}. So  we get $|PN| > \frac{K+1}{2K} |I|$.  This,
together with the definition of $\sigma$, implies
\begin{equation}\label{kk2}
|NX| = \sigma |PN| >  \frac{K}{1 + K} \cdot \frac{K+1}{2K} |I| =
\frac{|I|}{2}. \end{equation} Using the same argument we can show
that
\begin{equation}\label{kk3}|MY|
> \frac{|I|}{2}.\end{equation}   From (\ref{kk1}-\ref{kk3}), it follows
that
$$
|WZ| < \min\{|NX|, |MY|\}.
$$
This implies that $Z$ is above the straight segment $XY$. The proof
of Lemma~\ref{good geometry} is completed.
\end{proof}

By Lemma~\ref{c-g} and Theorem~\ref{SH}, there exist $1 <  C <
\infty$ and $0< \delta < 1$ such that for each interval $I$ in the
cell partition of level $n$, we have $|I| < C \cdot \delta^{n}$.
Since the four edges of each cell are $L$-commensurable, it follows
that all the cells of level $n$ are contained in the annulus
$$
\{z\:|\:  1 - L\cdot C\cdot  \delta^{n} < |z| < 1\}.
$$ From this it follows that there exist
 $1 < C_{1} < \infty$ and $0< \delta_1 < 1$ depending only on $C$, $L$ and $\delta$ such that
\begin{equation}\label{area-in}
area(Y_{n}) < C_{1} \delta_{1}^{n}
\end{equation}
for all $n \ge N_{0}$.

Let $R_{\tau}: \Bbb T \to \Bbb T$ be the rigid rotation given by $z
\mapsto e^{2 \pi i \tau} z$. Let $h: {\Bbb T} \to {\Bbb T}$ be the
homeomorphism such that $h(1) = 1$ and $g_{\theta}|{\Bbb T}(z) =
h^{-1} \circ R_{\tau} \circ h(z)$.  Since $\log{a_{n}}  = O(\sqrt
{n})$, by Yoccoz's extension theorem (see \cite{Yo2} or Theorem 6.5
of \cite{PZ}), we have
\begin{lem}\label{Yoccoz extension} Let $N_{0} \ge 1$ be the
constant taken in (\ref{constant-n}). Then there exist a
homeomorphism $H: \overline{\Delta} \to \overline{\Delta}$ and  a
constant $1 < \lambda < \infty$ such that
\begin{itemize}
\item[1.] $H|\Bbb T = h$, \item[2.]
for all $n \ge N_{0}$, the dilatation of $H$  in $\Delta\setminus
Y_{n+2}$ is not greater than $\lambda \cdot n$. \end{itemize}
\end{lem}

By composing $H$ with a quasiconformal homeomorphism of the unit
disk to itself which fixes the unit circle and maps $H(0)$ to $0$,
we may assume that $H(0) = 0$. Note that after this modification,
$H$ still satisfies the properties in Lemma~\ref{Yoccoz extension}
with probably different  $\lambda$.  Let
$$
\nu_{H} = \frac{\overline{\partial} H }{\partial H}
$$
be the Beltrami differential of $H$ in $\Delta$. Define
\begin{equation} \widetilde{g}_{\theta}(z) =
\begin{cases}
 g_{\theta}(z) & \text{for $z \in {\Bbb C} - \overline{\Delta}$}, \\
 H^{-1}\circ R_{\tau}\circ H(z)  & \text{
for $z \in \overline{\Delta}$}.
\end{cases}
\end{equation}
It follows that $\nu_{H}$ is $\widetilde{g}_{\theta}$-invariant. Let
$\nu$ denote the Beltrami differential in the whole complex plane
which is obtained by the pull back of $\nu_{H}$ through the
iterations of $\widetilde{g}_{\theta}$.

Define the premodel $\widetilde{G}_{\theta}: \Bbb C \to \Bbb C$ for
$f_{\theta}$ by
\begin{equation} \label{dg}
 \widetilde{G}_{\theta}(z) =
\begin{cases}
 G_{\theta}(z) & \text{for $z \in {\Bbb C} - {\Delta}$}, \\
\Phi^{-1}\circ H^{-1}\circ R_{\tau}\circ H \circ \Phi(z) & \text{
for $z \in{\Delta}$}.
\end{cases}
\end{equation}
Let us show that one can choose the branch of $\Phi^{-1}$ in the
above definition so that  $ \widetilde{G}_{\theta}: \Bbb C \to \Bbb
C$ is continuous.   Since $H(0) = 0$ by the previous assumption and
$\Phi$ is the square function, if $z$ runs through a closed curve in
$\Delta$ which turns around $0$ one time, the value
$$
H^{-1}\circ R_{\tau}\circ H \circ \Phi(z)
$$
will describe a closed curve which turns around $0$ two times, and
thus
$$
\Phi^{-1} \circ H^{-1}\circ R_{\tau}\circ H \circ \Phi(z)
$$
describe a closed curve which turns around $0$ one time. This
implies that  by continuous extension $\widetilde{G}_{\theta}$ is
well defined  in $\Delta$. From (\ref{easier-m}) it follows that
$\widetilde{G}_{\theta}$ is continuous in $\Bbb T$ and thus
continuous in $\Bbb C$.

 Let $\mu$ be the Beltrami differential in the complex plane
which is obtained by the pull back of $\nu$ through the square map
$\Phi$.
\begin{lem}\label{final}
The map $\widetilde{G}_{\theta}$ is odd.  The Beltrami differential
$\mu$ is $\widetilde{G}_{\theta}$-invariant, and moreover, $\mu(z) =
\mu(-z)$.
\end{lem}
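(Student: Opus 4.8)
The plan is to verify each of the three assertions in Lemma~\ref{final} directly from the definitions, using the oddness of $G_{\theta}$ (Lemma~\ref{G-odd}) and the symmetry of the square map $\Phi(z)=z^{2}$ under $z\mapsto -z$. First I would show $\widetilde{G}_{\theta}$ is odd. On $\Bbb C - \Delta$ this is immediate from Lemma~\ref{G-odd}. On $\Delta$, observe that $\Phi(-z)=\Phi(z)$, so $H^{-1}\circ R_{\alpha}\circ H\circ\Phi(-z)=H^{-1}\circ R_{\alpha}\circ H\circ\Phi(z)$; thus $\widetilde{G}_{\theta}(-z)$ and $\widetilde{G}_{\theta}(z)$ are the two square roots of $\Phi\circ G_{\theta}\circ\Phi^{-1}$ evaluated at $\Phi(z)$, i.e. they differ by a sign. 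To pin down the sign I would use continuity together with the normalization $\widetilde{G}_{\theta}(1)=G_{\theta}(1)$ and $\widetilde{G}_{\theta}(-1)=-G_{\theta}(1)=G_{\theta}(-1)$ (the latter by oddness of $G_{\theta}$), which forces the branch of $\Phi^{-1}$ chosen on $\Delta$ to make $\widetilde{G}_{\theta}(-z)=-\widetilde{G}_{\theta}(z)$ globally on the connected set $\Delta\setminus\{0\}$; the point $0$ is handled by continuity.

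Next I would treat $\mu(z)=\mu(-z)$. By definition $\mu=\Phi^{*}\nu$, and since $\Phi(-z)=\Phi(z)$ while $\partial\Phi(-z)/\partial\Phi(z) = (-z)/z$ has modulus $1$, the pullback formula $\mu(z)=\nu(\Phi(z))\,\overline{\partial\Phi}/\partial\Phi$ gives $|\mu(-z)|=|\nu(\Phi(z))|=|\mu(z)|$ and in fact, tracking the unimodular factor $\overline{(-z)}/(-z)=\overline{z}/z$ carefully, $\mu(-z)=\mu(z)$ as Beltrami differentials. (One must be mildly careful that the $(-1,1)$-form transforms with the ratio of the $\overline{z}$- and $z$-derivatives; since both pick up the same sign $-1$ under $z\mapsto -z$, the ratio is unchanged.) This is the routine computation the authors defer to the reader.

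Finally, for the $\widetilde{G}_{\theta}$-invariance of $\mu$: recall $\nu$ is $\widetilde{g}_{\theta}$-invariant by construction (it is the pullback of the $\widetilde{g}_{\theta}$-invariant $\nu_{H}$ through iterates of $\widetilde{g}_{\theta}$), and $\widetilde{g}_{\theta}=\Phi\circ\widetilde{G}_{\theta}\circ\Phi^{-1}$ on $\Bbb C-\Delta$ (by definition of $g_{\theta}$) and also on $\Delta$ (by the definitions of $\widetilde{g}_{\theta}$ and $\widetilde{G}_{\theta}$, noting $\Phi\circ\Phi^{-1}=\mathrm{id}$ absorbs the ambiguity). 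Then the functoriality of pullback, $\widetilde{G}_{\theta}^{*}\mu=\widetilde{G}_{\theta}^{*}\Phi^{*}\nu=(\Phi\circ\widetilde{G}_{\theta})^{*}\nu=(\widetilde{g}_{\theta}\circ\Phi)^{*}\nu=\Phi^{*}(\widetilde{g}_{\theta}^{*}\nu)=\Phi^{*}\nu=\mu$, gives the claim. I expect the only genuine subtlety — and the one place to be attentive — is the consistent bookkeeping of which branch of $\Phi^{-1}$ is used where, since $\Phi$ is two-to-one; once one checks the chosen branches are compatible with the normalization $\widetilde{G}_{\theta}(1)=G_{\theta}(1)$ and with oddness, everything is a formal chain-rule manipulation. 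The equality $\mu(z)=\mu(-z)$ then also follows a second way, as a consequence of $\widetilde{G}_{\theta}$ being odd and $\nu$ being canonically defined, providing a useful consistency check.
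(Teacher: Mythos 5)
Your verification is correct, and it is exactly the ``direct'' check the paper has in mind -- the paper states that the proof of Lemma~\ref{final} is direct and leaves it to the reader, so there is no written proof to diverge from. The only point worth making explicit is that pinning the sign via the values at $\pm 1$ uses the fact that the chosen branch of $\Phi^{-1}\circ H^{-1}\circ R_{\alpha}\circ H\circ\Phi$ extends continuously to $\overline{\Delta}$ and agrees with $G_{\theta}$ on all of $\Bbb T$ (the same connectedness argument on $\Bbb T$, starting from the normalization at $1$), which is implicit in the continuity of the surgery map across $\Bbb T$.
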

\begin{proof}
For all $z$ outside  the unit disk, the odd property of
$\widetilde{G}_{\theta}$ at $z$ follows from that of $G_{\theta}$.
Now assume that $z \in \Delta$. Let $w = H^{-1}\circ R_{\alpha}\circ
H \circ \Phi(z)$. When $z$ runs through the following half circle
which connects $z$ and $-z$,
$$
\{\zeta\:  \big{|}\: |\zeta| = |z|, \arg z \le \arg \zeta \le \arg z
+ \pi\},
$$
the value  $$H^{-1}\circ R_{\alpha}\circ H \circ \Phi(\zeta)$$
describes a simple closed curve  $\Gamma$ which  turns around the
origin exactly one time.  Since the branch of $\Phi^{-1}$ is chosen
by continuous extension,   $\Phi^{-1}(w)$ will  change to be its
negative after $w$ runs through $\Gamma$ one time. This implies that
$\widetilde{G}_{\theta} (-z) = - \widetilde{G}_{\theta}(z)$. This
proves the first assertion of the lemma.

Now let us verify that $\mu$ is $\widetilde{G}_{\theta}$-invariant.
Take an arbitrary $z \in \Bbb C$.  First let us assume $z \in \Bbb C
\setminus \overline{\Delta}$.  Let $E$ denote the infinitesimal
ellipse at $z$ which is associated to $\mu(z)$. By the definition of
$\mu$, the square map $\Phi$ maps $E$ to an infinitesimal ellipse
$T$ at $\Phi(z)$ which is associated to $\nu(\Phi(z))$. Since $\nu$
is $\widetilde{g}_{\theta}$-invariant and $g_{\theta} =
\widetilde{g}_{\theta}$  outside  the unit disk, $g_{\theta}$ maps
$T$ to the infinitesimal ellipse $S$ at $g_{\theta}(\Phi(z))$ which
is associated to $\nu$. By the definition of $\mu$, the square root
map $\Phi^{-1}$ maps $S$ to the infinitesimal ellipse $W$ at
$$
\widetilde{G}_{\theta}(z) =  G_{\theta}(z) = \Phi^{-1} (g_{\theta} (
\Phi(z)))
$$
which is associated to $\mu(\widetilde{G}_{\theta}(z))$.  This
proves that  $\mu$ is $\widetilde{G}_{\theta}$-invariant  for $z \in
\Bbb C \setminus \overline{\Delta}$.  Now assume that $z \in
\Delta$. Note that $\nu$ is $H^{-1} \circ R_{\alpha} \circ
H$-invariant. Again the proof is by tracking the images of the
infinitesimal ellipses and the argument is exactly the same as in
the case that $z \in \Bbb C \setminus \overline{\Delta}$.

Now let us prove $\mu(z) = \mu(-z)$. This is equivalent to prove
that the infinitesimal ellipses at $z$ and $-z$ are symmetric about
the origin. But this is obvious since both of them are mapped by the
square map to the same infinitesimal ellipse of $\nu$ at $z^2$.
\end{proof}

%*******************************************************************************
%*******************************************************************************

The heart of the paper is to verify the integrability of $\mu$, that
is, the existence of constants $M
> 0$, $\alpha
> 0$,  and $0< \epsilon_{0} < 1$  such that for any $0< \epsilon <
\epsilon_{0}$, the following inequality holds,
\begin{equation}\label{integrability}
area \{z\:\big{|}\: |\mu(z)| > 1 - \epsilon \} \le M
e^{-\alpha/\epsilon}.
\end{equation}
where $area(\cdot)$ denotes the area of a measurable set with
respect to the spherical metric.  The next lemma says that the
integrability of $\mu$ follows from that  of $\nu$.

\begin{lem}\label{equi}
If $\nu$ satisfies (\ref{integrability-p}) for some $M, \alpha > 0$
and $0< \epsilon_0 < 1$,  then  $\mu$ satisfies
(\ref{integrability})  with the same $0< \epsilon_{0}< 1$ but
possibly different constants $M
> 0$ and $\alpha
> 0$.
\end{lem}
\begin{proof}
Recall that  $\Phi: z \to z^{2}$ is the square map.   Since $\mu$ is
the pull back of $\nu$ by $\Phi$ and $\Phi$ preserves the
dilatation, it is sufficient to prove that there exists a $C
> 0$ such that for any measurable set $E \subset {\Bbb C}$,
the following inequality holds,
\begin{equation}\label{growth}
area(\Phi^{-1}(E)) < C area(E)^{1/2}.
\end{equation}
To show this, let $E_{1} = E \cap \overline{\Delta}$ and $E_{2} = E
\cap ({\Bbb C} \setminus \Delta)$. It is sufficient to prove the
existence of a universal constant $1 < C < \infty$ such that
(\ref{growth}) holds for both $E_{1}$ and $E_{2}$. Since the
transform $\zeta = 1/z$ commutes with $\Phi$ and preserves the the
spherical area form
$$\frac{\frac{i}{2} dz \wedge d\bar{z}}{(1 + |z|^{2})^{2}},$$ and
maps $E_{2}$ to some subset of $\overline{\Delta}$,  it suffices to
prove (\ref{growth}) holds for $E_{1}$. Note that in
$\overline{\Delta}$, the Euclidean area is equivalent to the
spherical area.  That is, there exists a universal constant  $1 <
C_{1} < \infty$ such that for any measurable subset $E_{1} \subset
\overline{\Delta}$, we have
$$area(E_{1})/C_{1}\le m(E_{1}) \le C_{1} area(E_{1})$$ where
$m(\cdot)$ denotes the Euclidean area.  Thus it is sufficient to
prove the existence of a universal constant $1 < C_{2} < \infty$
such that  for any measurable subset $E_{1} \subset
\overline{\Delta}$, \begin{equation}\label{iem} m(\Phi^{-1}(E_{1}))
< C m(E_{1})^{1/2}.\end{equation} To see this, let $\zeta = \Phi(z)$
where $\zeta = x + iy$ and $z = s + it$. Since $\Phi:
\Phi^{-1}(E_{1}) \to E_{1}$ is a two-to-one map, we have
\begin{equation}\label{obv} m(E_{1}) = \int_{E_{1}}dxdy =
\frac{1}{2}\int_{\Phi^{-1}(E_{1})}|\Phi'(z)|^{2} dsdt =
2\int_{\Phi^{-1}(E_{1})}|z|^{2} dsdt.
\end{equation}
For $ 0 \le r \le 1$, let
$$
F(r) = m (\Phi^{-1}(E_1) \cap \{z \:|\: |z| \le r\}).
$$
It is clear that $F: [0, 1] \to [0, m(\Phi^{-1}(E_{1}))]$ is  a
continuous and monotone function.  Let $dF(r)$ denote the finite
Borel measure on $[0, 1]$ induced by $F$.  It is clear that
\begin{equation}\label{mdo}
dF(r) \le 2 \pi r dr.\end{equation}   Let $0  \le r_{0} \le 1$ be
such that
$$\pi r_{0}^{2} = m(\Phi^{-1}(E_1)).$$  From (\ref{obv}) we have
\begin{equation}\label{tob}
m(E_{1}) = 2\int_{\Phi^{-1}(E_{1})}|z|^{2} dsdt = 2\int_{0}^{1}
r^{2} dF(r)
$$
$$
=   2\int_{0}^{r_{0}} r^{2} (dF(r) - 2 \pi r dr) + 2\int_{0}^{r_{0}}
r^{2} \cdot 2 \pi r dr + 2\int_{r_{0}}^{1} r^{2} dF(r)
\end{equation}
By (\ref{mdo}) it follows that $(dF(r) - 2 \pi r dr)$ is a
non-positive measure.  Thus we have
\begin{equation}\label{dd4}
2\int_{0}^{r_{0}} r^{2} (dF(r) - 2 \pi r dr)  \ge 2 r_{0}^{2}
\int_{0}^{r_{0}}  (dF(r) - 2 \pi r dr) = 2r_{0}^{2} \int_{0}^{r_{0}}
dF(r) - 2 \pi r_{0}^{4}.
\end{equation}
Since $dF(r)$ is non-negative, we have
\begin{equation}\label{ee44}
2\int_{r_{0}}^{1} r^{2} dF(r) \ge 2 r_{0}^{2} \int_{r_{0}}^{1}
dF(r).
\end{equation}
From (\ref{dd4}), (\ref{ee44}) and the choice of $r_0$, we have
$$
2\int_{0}^{r_{0}} r^{2} (dF(r) - 2 \pi r dr) + 2\int_{r_{0}}^{1}
r^{2} dF(r)
$$
$$
\ge 2r_{0}^{2} \int_{0}^{r_{0}} dF(r) - 2 \pi r_{0}^{4} +
2r_0^2\int_{r_{0}}^{1}  dF(r)
$$
$$
= 2r_{0}^{2} \int_{0}^{1} dF(r) - 2 \pi r_{0}^{4} = 2 r_{0}^{2}
m(\Phi^{-1}(E_{1})) - 2\pi r_{0}^{4} = 0.
$$

This, together with (\ref{tob}), implies
$$
m(E_{1}) \ge 2\int_{0}^{r_{0}} r^{2} (2 \pi r dr) = \pi r_{0}^{4} =
\pi^{-1} (\pi r_0^2)^{2} =  \pi^{-1} m(\Phi^{-1}(E_{1}))^{2}.
$$
That is,
$$
m(\Phi^{-1}(E_{1})) \le \sqrt{\pi} m(E_{1})^{1/2}.
$$
This proves (\ref{iem}) and  completes the proof of
Lemma~\ref{equi}.
\end{proof}

\section{Proof of the Main Theorem}
Assuming $\nu$ satisfies (\ref{integrability-p}).  By
Lemma~\ref{equi} $\mu$ satisfies (\ref{integrability}).  By
Theorem~\ref{David} we have a David homeomorphism $\phi: \Bbb C \to
\Bbb C$ which solves the Beltrami equation  given by $\mu$ and fixes
$0$ and  infinity, and maps $1$ to $\pi/2$.
\begin{lem}\label{odd homeo}
The map $\phi$ is odd.
\end{lem}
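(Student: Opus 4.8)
The plan is to exploit the uniqueness part of David's theorem (Theorem~\ref{David}) together with the symmetry of all the data going into the construction of $\phi$. Recall from Lemma~\ref{final} that the Beltrami differential $\mu$ satisfies $\mu(z) = \mu(-z)$, i.e. $\mu$ is invariant under the involution $z \mapsto -z$. The idea is to produce a second solution of the Beltrami equation $\overline{\partial}\phi = \mu\,\partial\phi$ out of $\phi$ and this involution, and then match it with $\phi$ using normalization.

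First I would set $\iota(z) = -z$ and consider the map $\psi = \iota \circ \phi \circ \iota$. Since $\iota$ is conformal (indeed affine) and an involution, a direct computation with the chain rule for Beltrami coefficients shows $\overline{\partial}\psi/\partial\psi$ at $z$ equals $\overline{(\mu(-z))}\cdot(\overline{\iota'}/\iota')\cdot(\ldots)$; carrying out the bookkeeping and using $\iota'(z) = -1$ (so the unit-modulus factors from pre- and post-composition by $\iota$ cancel) together with $\mu(-z) = \mu(z)$ from Lemma~\ref{final}, one finds that $\psi$ solves the \emph{same} Beltrami equation $\overline{\partial}\psi = \mu\,\partial\psi$ on $\Bbb C$. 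Moreover $\psi \in W^{1,1}_{loc}(\Bbb C)$ because $\phi$ is and $\iota$ is a diffeomorphism, and $\psi$ is a homeomorphism of $\Bbb C$ onto $\Bbb C$ as a composition of such.

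Next I would invoke the uniqueness clause of Theorem~\ref{David}: since $\phi$ and $\psi$ are both $W^{1,1}_{loc}$ homeomorphic solutions of the same Beltrami equation on $\Bbb C$, there is a conformal map $\sigma: \Bbb C \to \Bbb C$ with $\psi = \sigma \circ \phi$. A conformal automorphism of $\Bbb C$ that also fixes $\infty$ (which $\sigma$ does, since both $\phi$ and $\psi$ fix $\infty$) is affine, $\sigma(w) = aw + b$. Now I pin down $a$ and $b$ using the normalization of $\phi$: $\phi(0) = 0$ and $\phi(1) = \pi/2$. From $\phi(0)=0$ we get $\psi(0) = -\phi(-0) = 0$, hence $b = 0$. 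From $\psi(1) = -\phi(-1)$ and the fact that $\widetilde G_\theta$ is odd (Lemma~\ref{final}) forcing $\phi(-1) = -\pi/2$ — this is the one point that needs a short argument, see below — we get $\psi(1) = \pi/2 = a\cdot\phi(1) = a\cdot\pi/2$, so $a = 1$ and $\sigma = \mathrm{id}$. Therefore $\psi = \phi$, i.e. $-\phi(-z) = \phi(z)$ for all $z$, which is exactly the assertion that $\phi$ is odd.

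The main obstacle — and the only genuinely non-automatic step — is justifying $\phi(-1) = -\pi/2$, since the normalization was only imposed as $\phi(1) = \pi/2$. I expect this to follow from the fact that $\widetilde{G}_\theta$ is odd (Lemma~\ref{final}), so that $-1$, being the second critical point of $\widetilde{G}_\theta$ on $\Bbb T$ symmetric to $1$, must be carried by $\phi$ to the critical point of $T_\theta = \phi \circ \widetilde{G}_\theta \circ \phi^{-1}$ symmetric to $\pi/2$, namely $-\pi/2$; alternatively, one can run the same uniqueness argument first without fixing $\phi(-1)$, deduce $\psi = \sigma\circ\phi$ with $\sigma$ affine fixing $0$, and observe that $\psi$ is then \emph{also} a normalized solution, whence by the uniqueness of the normalized solution (again Theorem~\ref{David}) $\sigma$ must be the identity and in particular $\phi(-1) = \psi(1) = -\phi(-1)$ would be vacuous — so in fact the cleanest route is to note that $\sigma \circ \phi = \iota \circ \phi \circ \iota$ with $\sigma$ affine fixing $0$ means $\sigma$ commutes appropriately, and evaluating the functional equation at the fixed point structure forces $a^2 = 1$; combined with $\phi$ being orientation-preserving (so $a > 0$) this gives $a = 1$ directly, bypassing the need to know $\phi(-1)$ in advance. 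I would present this last variant, as it keeps the argument self-contained within the results already available.
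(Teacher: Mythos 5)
Your overall strategy is the same as the paper's: use $\mu(z)=\mu(-z)$ (Lemma~\ref{final}) to produce from $\phi$ a second $W^{1,1}_{loc}$ homeomorphic solution of the same Beltrami equation by composing with $z\mapsto -z$, invoke the uniqueness clause of Theorem~\ref{David} to write it as $\sigma\circ\phi$ with $\sigma$ conformal fixing $0$ and $\infty$, hence $\sigma(w)=aw$, and then force $a$ to be the right constant. (The paper works with $\tilde{\phi}(z)=\phi(-z)$ instead of your $\psi(z)=-\phi(-z)$; this difference is immaterial.) The problem is the final step, where you pin down $a$. Your first route, asserting $\phi(-1)=-\pi/2$, is not available at this point of the argument: the normalization only prescribes $\phi(0)$, $\phi(\infty)$ and $\phi(1)$, and the position of $\phi(-1)$ is essentially what the oddness of $\phi$ is supposed to deliver (oddness of $\phi$ is used to get oddness of $T_{\theta}$, and only at the very end of the paper are the critical points of $T_{\theta}$ identified with $\pi/2+k\pi$), so that route is circular. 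Your preferred ``cleanest'' variant is also flawed: from $-\phi(-z)=a\phi(z)$ one indeed gets $a^{2}=1$, but the inference ``$\phi$ is orientation-preserving, so $a>0$'' is false --- the linear map $w\mapsto aw$ is orientation-preserving for \emph{every} complex $a\neq 0$, in particular for $a=-1$, so orientation considerations give no information about the sign of $a$.

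The correct one-line conclusion, which is exactly how the paper finishes, uses injectivity instead: if $a=-1$, then $-\phi(-z)=-\phi(z)$, i.e.\ $\phi(-z)=\phi(z)$ for all $z$, contradicting the fact that $\phi$ is a homeomorphism of the plane; hence $a=1$, $\psi=\phi$, and $\phi$ is odd. So your argument is salvageable by replacing the orientation step (and abandoning the unjustified claim $\phi(-1)=-\pi/2$) with this injectivity argument, but as written the concluding step does not stand.
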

\begin{proof}
By Lemma~\ref{final}, $\mu(z) = \mu(-z)$. Consider the map
$\tilde{\phi}(z) = \phi(-z)$. It follows that $\phi$ and
$\tilde{\phi}$ has the same Beltrami differential. By
Theorem~\ref{David}, it follows that $\tilde{\phi} \circ \phi^{-1}$
is a conformal map in the plane. Since it fixes $0$ and $\infty$, it
follows that $(\tilde{\phi} \circ \phi^{-1})(z) = az$ for some $a
\ne 0$.  Substituting in  $z = \phi(\zeta)$, we get  $\phi(-\zeta) =
a \phi(\zeta)$ for all $\zeta \in \Bbb C$.  Then  $\phi(\zeta) =
\phi(-(-\zeta)) = a \phi(-\zeta)$. Thus  $\phi(-\zeta) = a
\phi(\zeta) = a \phi(-(-\zeta)) = a^{2} \phi(-\zeta)$ for all
$\zeta$. This implies that $a^{2} = 1$. Clearly $a \ne 1$ since
$\phi$ is a homeomorphism of the plane. It follows that $a = -1$ and
thus $\phi(-z) = -\phi(z)$. The lemma has been proved.
\end{proof} Let $\widetilde{G}_\theta$ be defined as in (\ref{dg}).  Define
$T_{\theta}: \Bbb C \to \Bbb C$ by setting
\begin{equation}\label{nnff}
T_{\theta}(z) = \phi \circ \widetilde{G}_{\theta} \circ \phi^{-1}(z)
\hbox{  for all } z \in \Bbb C.
\end{equation}
\begin{lem}\label{PH}
$T_{\theta}$ is an odd entire function.
\end{lem}
The proof uses completely the same argument as in the proof of Lemma
5.5 of \cite{PZ}.
\begin{proof}
Let $X$ denote the set which consists of the origin and all the
critical points of $\widetilde{G}_{\theta}$.  To show that
$T_\theta$ is an entire function, it is sufficient to show that the
map $\phi \circ \widetilde{G}_{\theta}$ belongs to ${\rm
W^{1,1}_{loc}}({\Bbb C} \setminus X)$.  In fact, if $\phi \circ
\widetilde{G}_{\theta}$ belongs to ${\rm W^{1,1}_{loc}}({\Bbb C}
\setminus X)$, then in any small open neighborhood $U$ of a point in
$\Bbb C \setminus X$,  by Lemma~\ref{final}, the Beltrami
differential of $\phi \circ \widetilde{G}_{\theta}$ and $\phi$ are
both equal to $\mu$.  Thus by Theorem~\ref{David} $\phi \circ
\widetilde{G}_{\theta} = \sigma \circ \phi$  where $\sigma$ is a
conformal map defined on $\phi(U)$. This implies that $T_{\theta}$
is holomorphic in the complex plane except the points in $\phi(X)$.
But it is clear that for any point $z \in \phi(X)$, there is a
neighborhood $W$ of $z$ such that $T_{\theta}$ is bounded in
$W\setminus \{z\}$. It follows that all the points in $\phi(X)$ are
removable. So $T_{\theta}$ is an entire function. Now  let us show
that the map $\phi \circ \widetilde{G}_{\theta}$ belongs to ${\rm
W^{1,1}_{loc}}({\Bbb C} \setminus X)$.

Firstly, we have
\begin{equation}\label{tt1}
\phi \circ \widetilde{G}_{\theta} \in {\rm W^{1,1}_{loc}}({\Bbb C}
\setminus (X \cup \overline{\Delta})).\end{equation} This is because
$\widetilde{G}_{\theta}$ is holomorphic in ${\Bbb C} \setminus (X
\cup \overline{\Delta})$ and $\phi \in {\rm W^{1,1}_{loc}}(\Bbb C)$.

Secondly,  we have  \begin{equation}\label{tt2}\phi \circ
\widetilde{G}_{\theta} \in {\rm W^{1,1}_{loc}}({\Delta \setminus
\{0\}}).\end{equation} To see this, write $\phi \circ
\widetilde{G}_{\theta} = \phi\circ \Phi^{-1} \circ H^{-1} \circ
R_{\tau} \circ H \circ \Phi$ in $\Delta$ (cf. (\ref{dg})). Take an
arbitray poitn $z \in \Delta \setminus \{0\}$. Let $U$ be a small
open disk centered at $z$  such that $\Phi$ maps $U$
homeomorphically onto a Jordan domain  $\Phi(U)$.   It follows that
$H \circ \Phi$ belongs to ${\rm W^{1,1}_{loc}}(U)$. Since $R_{\tau}$
is the rigid rotation given by $\tau$, $R_{\tau} \circ H \circ \Phi$
also belongs to ${\rm W^{1,1}_{loc}}(U)$. In particular, $R_{\tau}
\circ H \circ \Phi$ maps $U$ homeomorphically onto a Jordan domain
$V \subset \Delta \setminus \{0\}$. Thus $H^{-1}(V)$ is a Jordan
domain in $\Delta \setminus \{0\}$ and the two branches of
$\Phi^{-1}$ are both well defined and conformal in $H^{-1}(V)$. Thus
for either of the two branches of $\Phi^{-1}$, $\phi\circ \Phi^{-1}$
belongs to  $W^{1,1}_{{\rm loc}}(H^{-1}(V))$.   By Lemma~\ref{final}
$H$ has the same Beltrami differential as $\phi \circ \Phi^{-1}$ in
$H^{-1}(V)$. Since  $H$ belongs to $W^{1,1}_{{\rm loc}}(H^{-1}(V))$
also, it follows from Theorem~\ref{David} that $\phi \circ \Phi^{-1}
\circ H^{-1}$ is conformal in $V$.  It follows that
$$\phi \circ \widetilde{G}_{\theta} = (\phi \circ \Phi^{-1} \circ
H^{-1} ) \circ (R_{\alpha} \circ  H \circ \Phi) \in {\rm
W^{1,1}_{loc}}(U).$$ Since $z$ is an arbitrary point in $\Delta
\setminus \{0\}$, (\ref{tt2}) follows.

 It remains to prove that for
every small open disk $U$ centered at the point in ${\Bbb T}
\setminus \{1, -1\}$, $\phi \circ \widetilde{G}_{\theta} \in {
W^{1,1}_{\rm loc}}({U})$.   From (\ref{tt1}) and (\ref{tt2}) it
follows that  $\phi \circ \widetilde{G}_{\theta}$ is almost
differentiable in $U$. Therefore
\begin{equation}\label{Jac}
\int_{U} {\rm Jac}(\phi \circ \widetilde{G}_{\theta}) \le area\:
((\phi \circ \widetilde{G}_{\theta}) (U)) < \infty.
\end{equation}
This implies that ${\rm Jac}(\phi \circ \widetilde{G}_{\theta}) \in
L^{1}(U)$.   It is sufficient to prove that $\partial (\phi \circ
\widetilde{G}_{\theta}) \in L^{1}(U)$ and thus
$\overline{\partial}(\phi \circ \widetilde{G}_{\theta}) \in
L^{1}(U)$(Then the distributive partial derivatives coincide with
the ordinary partial derivatives in $U$ and are thus integrable in
$U$). But this follows from the following argument. Since $\mu_{\phi
\circ \widetilde{G}_{\theta}} = \mu$ almost everywhere in $U$, we
have
 $$
 |\partial (\phi \circ \widetilde{G}_{\theta})|^{2} =
\frac{Jac(\phi \circ \widetilde{G}_{\theta})}{1-|\mu_{\phi \circ
\widetilde{G}_{\theta}}|^{2}} \le  \frac{Jac(\phi \circ
\widetilde{G}_{\theta})}{1-|\mu_{\phi \circ
\widetilde{G}_{\theta}}|}  =  \frac{Jac(\phi \circ
\widetilde{G}_{\theta})}{1-|\mu|}
$$ and therefore,
$$
|\partial (\phi \circ \widetilde{G}_{\theta})| \le \frac{{\rm
Jac}(\phi \circ \widetilde{G}_{\theta})^{1/2}}{(1-|\mu|)^{1/2}}.
$$
Since $\mu$ satisfies the exponential growth condition
(\ref{integrability}),
 the measurable function $1/(1- |\mu|)$ is
integrable in $U$. This, together with (\ref{Jac}) and Cauchy
inequality, implies the integrability of $\partial (\phi \circ
\widetilde{G}_{\theta})$ in $U$. This proves that $T_{\theta}$ is an
entire function.

The odd property of $T_{\theta}$ follows from the odd property of
$\widetilde{G}_{\theta}$ and $\phi$, see Lemmas~\ref{final}  and
~\ref{odd homeo}.
\end{proof}

\begin{defi}\label{pt}{\rm
Two maps $f: {\Bbb C} \to {\Bbb C}$ and $g: {\Bbb C} \to {\Bbb C}$
are called topologically equivalent  if there exist two
homeomorphisms $\theta_{1}$ and $\theta_{2}$ of the complex plane
such that $f = \theta_{2}^{-1} \circ g \circ \theta_{1}$}.
\end{defi}
\begin{lem}[Lemma 1, \cite{DS}]\label{DS}
Let $f$ be an entire function. If $f$ is topologically equivalent to
the map  $z \mapsto \sin(z)$, then $f(z) = a + b\sin(cz + d)$ where
$a, b, c, d \in\Bbb C$, and $b, c \ne 0$.
\end{lem}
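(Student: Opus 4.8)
The plan is to use the topological equivalence only to read off the singular-value data of $f$, then to exhibit $f$ as $\sin\circ F$ for some entire function $F$, and finally to force $F$ to be affine. Write the equivalence as $f=\psi\circ\sin\circ\varphi$ with $\psi,\varphi$ homeomorphisms of the plane (each extending to a self-homeomorphism of $\widehat{\Bbb C}$ fixing $\infty$). Such an equivalence carries critical values to critical values with the same local degrees, and carries finite asymptotic values to finite asymptotic values; since $\sin$ has exactly two critical values $1,-1$, each of local degree $2$, and no finite asymptotic value, the same holds for $f$. Let $v_\pm$ denote its two critical values and put $A(w)=\frac{v_+-v_-}{2}w+\frac{v_++v_-}{2}$, so $A(1)=v_+$ and $A(-1)=v_-$. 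It then suffices to prove the statement for $h:=A^{-1}\circ f$, which is entire, topologically equivalent to $\sin$, has critical values exactly $\{1,-1\}$, has only simple critical points, and has no finite asymptotic values: if $h(z)=\sin(cz+d)$ then $f=A\circ h=a+b\sin(cz+d)$ with $a=\frac{v_++v_-}{2}$ and $b=\frac{v_+-v_-}{2}\neq 0$.

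Next I would build $F$. Because $\pm 1$ are attained only at critical points, the zero set of $1-h^{2}=(1-h)(1+h)$ is exactly the critical set $\{c_k\}$; at each $c_k$ one factor has a double zero and the other is nonzero, so every zero of $1-h^{2}$ has order exactly $2$, while $h'$ has a simple zero at each $c_k$ and no other zeros. Since all zeros of $1-h^{2}$ are double, $1-h^{2}$ has even winding number along every loop in $\Bbb C\setminus\{c_k\}$, so $\sqrt{1-h^{2}}$ is single-valued; it extends to an entire function $s$ with $s^{2}=1-h^{2}$ and with simple zeros precisely at the $c_k$. Then $H:=h'/s$ is holomorphic off $\{c_k\}$ and, being a ratio of simple zeros with nonzero limiting value at each $c_k$, extends to an entire, nowhere-vanishing function. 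Fixing a non-critical point $z_0$ and $w_0$ with $\sin w_0=h(z_0)$ and $\cos w_0=s(z_0)$, set $F(z)=w_0+\int_{z_0}^{z}H(\zeta)\,d\zeta$; this is entire with $F'=H$ nowhere zero. One checks $h'=sH$ and $s'=-hH$, so the pairs $(h,s)$ and $(\sin F,\cos F)$ both solve the linear system $u'=vH$, $v'=-uH$ with the same value at $z_0$; by uniqueness of solutions, $\sin F\equiv h$ on $\Bbb C$. Thus $h=\sin\circ F$ with $F$ entire and locally injective.

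It then remains to show $F$ is affine. Now $F$ has no critical points. Moreover $F$ has no finite asymptotic value: if $F\to w_0\in\Bbb C$ along a curve tending to $\infty$, then $h=\sin F\to\sin w_0$ along that curve, so $\sin w_0$ would be a finite asymptotic value of $h$, contradicting that $h$, being topologically equivalent to $\sin$, has none. Hence the singular-value set of $F$ is empty, so $F:\Bbb C\to\Bbb C$ is an unramified covering; since the base plane is simply connected, $F$ is a homeomorphism, and an injective entire map is affine, $F(z)=cz+d$ with $c\neq 0$. Therefore $h(z)=\sin(cz+d)$, and $f=A\circ h=a+b\sin(cz+d)$ with $b,c\neq 0$, as claimed.

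The step I expect to demand the most care is the middle one: verifying rigorously that $\sqrt{1-h^{2}}$ is single-valued (the parity-of-winding-number count, which is exactly where the simplicity of all critical points is used, since a higher-order critical point would make $H$ have a pole), and that the function $F$ obtained by integrating $H$ really satisfies $\sin F=h$ on all of $\Bbb C$ (the ODE-uniqueness argument and the choice of initial data $w_0$). Once $h=\sin\circ F$ is in hand, the only input still needed from the hypothesis is the absence of finite asymptotic values, which immediately removes all singular values of $F$, and the passage from there to "affine" is purely formal.
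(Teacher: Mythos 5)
Your argument is correct in outline, but note that the paper itself offers no proof of this lemma to compare against: it states the result as Lemma 1 of \cite{DS} and refers the reader there. So what you have written is a self-contained substitute for that citation, and it is essentially the classical argument one expects behind the Dom\'{i}nguez--Sienra lemma: use the equivalence $f=\psi\circ\sin\circ\varphi$ to see that $f$ has exactly two critical values, only simple critical points, and no finite asymptotic value; normalize the critical values to $\pm1$; factor $h=\sin\circ F$ with $F$ entire and locally injective; and conclude that $F$, having no finite singular values, is a covering of the plane, hence a homeomorphism, hence affine. That buys the paper a proof in place of a reference, at the cost of invoking some classical function theory at the end.

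Two steps need more justification than you give them. First, the claim that $\pm1$ are attained by $h$ only at critical points does not follow from singular-value data alone (for $z^{3}-3z$ the critical value $2$ is also taken at a regular point); it does follow from the equivalence itself, since $h^{-1}(\{1,-1\})=\varphi^{-1}(\sin^{-1}(\{1,-1\}))$ is exactly the critical set. So your opening plan to use the equivalence ``only to read off the singular-value data'' must be amended, and this fiber identification stated explicitly, because the single-valuedness of $\sqrt{1-h^{2}}$ (all zeros of even order) hinges on it. Second, the final step --- empty singular set implies $F:{\Bbb C}\to{\Bbb C}$ is a covering, in particular surjective --- should be cited as the classical theorem it is (surjectivity via, e.g., Iversen's theorem that an omitted value of an entire function is an asymptotic value); with that in hand, $F(z)=cz+d$ with $c\ne0$ and the lemma follow at once.
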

 For a proof of Lemma~\ref{DS}, see \cite{DS}.
\begin{lem}\label{top-equ}
Let $f: {\Bbb C} \to {\Bbb C}$ and $g: {\Bbb C} \to {\Bbb C}$ be two
continuous maps such that  $f = g$ on the outside of the unit disk.
If in addition, $f: \overline{\Delta} \to \overline{\Delta}$ and $g:
\overline{\Delta} \to \overline{\Delta}$ are both homeomorphisms,
then $f$ and $g$ are topologically equivalent to each other.
\end{lem}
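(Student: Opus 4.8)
The plan is to produce the homeomorphisms $\theta_{1},\theta_{2}$ of ${\Bbb C}$ by hand, using that $f$ and $g$ coincide outside $\Delta$ and that each restricts to a homeomorphism of $\overline{\Delta}$. In fact I expect to be able to take $\theta_{2}=\mathrm{id}$, so the whole task reduces to building one homeomorphism $\theta_{1}$ of ${\Bbb C}$ with $f=g\circ\theta_{1}$.

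First I would record the boundary matching: since $f=g$ on ${\Bbb C}\setminus\Delta$, continuity gives $f=g$ on the closure of that set, in particular $f|_{\Bbb T}=g|_{\Bbb T}$; and since $f,g$ carry $\overline{\Delta}$ homeomorphically onto $\overline{\Delta}$, they carry ${\Bbb T}$ onto ${\Bbb T}$. Then I would set
$$
\theta_{1}(z)=\begin{cases}(g|_{\overline{\Delta}})^{-1}\big(f(z)\big)&z\in\overline{\Delta},\\ z&z\in{\Bbb C}\setminus\Delta.\end{cases}
$$
On $\overline{\Delta}$ this is the homeomorphism $(g|_{\overline{\Delta}})^{-1}\circ f|_{\overline{\Delta}}$ of $\overline{\Delta}$ onto itself; on ${\Bbb T}$ it equals $(g|_{\Bbb T})^{-1}\circ f|_{\Bbb T}=(g|_{\Bbb T})^{-1}\circ g|_{\Bbb T}=\mathrm{id}_{\Bbb T}$, so the two formulas agree on the overlap ${\Bbb T}=\overline{\Delta}\cap({\Bbb C}\setminus\Delta)$. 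The pasting lemma (two closed sets covering ${\Bbb C}$) then shows $\theta_{1}$ is a continuous bijection of ${\Bbb C}$; running the same argument for the candidate inverse, which is $(f|_{\overline{\Delta}})^{-1}\circ g|_{\overline{\Delta}}$ on $\overline{\Delta}$ and the identity outside, shows $\theta_{1}^{-1}$ is continuous too, so $\theta_{1}$ is a homeomorphism of ${\Bbb C}$.

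Finally I would check the conjugacy. For $z\in\overline{\Delta}$ we have $f(z)\in\overline{\Delta}$, hence $g(\theta_{1}(z))=g|_{\overline{\Delta}}\big((g|_{\overline{\Delta}})^{-1}(f(z))\big)=f(z)$; for $z\in{\Bbb C}\setminus\Delta$ we have $g(\theta_{1}(z))=g(z)=f(z)$ by hypothesis. Thus $f=g\circ\theta_{1}=\mathrm{id}^{-1}\circ g\circ\theta_{1}$, which is exactly topological equivalence of $f$ and $g$. I do not expect a real obstacle here; the only points needing a little care are the continuity of $\theta_{1}$ and of $\theta_{1}^{-1}$ across ${\Bbb T}$, handled by the boundary matching above, and the harmless observation that $f$ and $g$ genuinely agree on all of ${\Bbb T}$, which follows from continuity regardless of whether ``outside the unit disk'' is read as the open or the closed exterior.
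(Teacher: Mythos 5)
Your proposal is correct and is essentially the paper's own argument: the paper also takes one conjugating map to be the identity and the other to be $g^{-1}\circ f$ on $\overline{\Delta}$ glued to the identity outside, with the gluing working because $f=g$ on ${\Bbb T}$. You merely spell out the pasting and inverse-continuity details that the paper leaves implicit.
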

\begin{proof}
Define $\theta_{2}(z) = z$ for $z \notin \Delta$ and $\theta_{2}(z)
= g^{-1} \circ f(z)$ for $z \in \Delta$. It follows that
$\theta_{2}: {\Bbb C} \to \Bbb C$ is a homeomorphism. Let
$\theta_{1} = id$. Then $f = \theta_{1}^{-1} \circ g \circ
\theta_{2}$. The Lemma follows.
\end{proof}
Let $\psi: \widehat{\Bbb C} - \overline{\Delta} \to \widehat{\Bbb C}
- \overline{D}$ be the  map defined right before Lemma~\ref{odd}.
Let $\eta : \widehat{\Bbb C} \to \widehat{\Bbb C}$ be a homeomorphic
extension of $\psi$. As before let $T(z) = \sin (z)$. By
Definition~\ref{pt}  $T$ is topologically equivalent to $T \circ
\eta $. Let $t \in [0, 1)$ be the number in Lemma~\ref{exist}.
Define a map $S: \Bbb C \to \Bbb C$ by
\begin{equation}\label{S}
S(z) = e^{2\pi it} \cdot (T\circ \eta)(z), \quad \forall z \in \Bbb
C.
\end{equation}
\begin{lem}\label{tran}
 $S$ is topologically equivalent to  both $T$ and
 $\widetilde{G}_{\theta}$.
\end{lem}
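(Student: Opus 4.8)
The plan is to establish the two topological equivalences separately: $S\sim T$ is essentially formal, while $S\sim\widetilde{G}_{\theta}$ comes from Lemma~\ref{top-equ}. For the first, recall it has already been noted that $T(z)=\sin z$ is topologically equivalent to $T\circ\eta$ (using that $\eta$ restricts to a homeomorphism of $\Bbb C$, since $\psi(\infty)=\infty$). Writing $R_{t}(z)=e^{2\pi it}z$, we have $S=R_{t}\circ(T\circ\eta)$, and post-composing a topological equivalence by the plane homeomorphism $R_{t}$ again yields a topological equivalence: if $T\circ\eta=\theta_{2}^{-1}\circ T\circ\theta_{1}$, then $S=(\theta_{2}\circ R_{t}^{-1})^{-1}\circ T\circ\theta_{1}$. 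Hence $S$ is topologically equivalent to $T$.

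For the second equivalence I would verify that the pair $(S,\widetilde{G}_{\theta})$ satisfies the hypotheses of Lemma~\ref{top-equ}. Both maps are continuous on $\Bbb C$ ($S$ as a composition of continuous maps, $\widetilde{G}_{\theta}$ by construction), and they agree on $\Bbb C\setminus\Delta$: for $z\in\Bbb C-\Delta$ one has $\eta(z)=\psi(z)$, so $S(z)=e^{2\pi it}T(\psi(z))=e^{2\pi it}G(z)=G_{\theta}(z)=\widetilde{G}_{\theta}(z)$, directly from the definitions of $G$, $G_{\theta}$ and $\widetilde{G}_{\theta}$ in $\S3$ and $\S5$. Moreover $S$ maps $\overline{\Delta}$ homeomorphically onto $\overline{\Delta}$: since $\eta$ is a homeomorphism of $\widehat{\Bbb C}$ extending the Riemann map $\psi:\widehat{\Bbb C}\setminus\overline{\Delta}\to\widehat{\Bbb C}\setminus\overline{D}$, it carries $\overline{\Delta}$ homeomorphically onto $\overline{D}$; by Lemma~\ref{basic domain}, $T$ carries $\overline{D}$ homeomorphically onto $\overline{\Delta}$; and $R_{t}$ is a homeomorphism of $\overline{\Delta}$. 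Composing gives the claim.

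It then remains to check the same statement for $\widetilde{G}_{\theta}$, which is the one delicate point, because on $\Delta$ the map $\widetilde{G}_{\theta}=\Phi^{-1}\circ\widetilde{g}_{\theta}\circ\Phi$ involves a branch of the square root. On $\Bbb T$, $\widetilde{G}_{\theta}$ restricts to the circle homeomorphism $G_{\theta}|\Bbb T$ of Lemma~\ref{G}. On $\Delta$, $\widetilde{g}_{\theta}=H^{-1}\circ R_{\alpha}\circ H$ is a homeomorphism of $\Delta$ fixing $0$; since $\widetilde{G}_{\theta}$ is odd (Lemma~\ref{final}), the two square roots $\pm z$ of a point of $\Delta$ are carried by $\widetilde{G}_{\theta}$ to a point and its negative, so with the branch of $\Phi^{-1}$ normalized by $\widetilde{G}_{\theta}(1)=G_{\theta}(1)$ the map $\widetilde{G}_{\theta}$ is a well-defined continuous bijection of $\Delta$ onto itself fixing $0$. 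These pieces fit together into a continuous bijection $\overline{\Delta}\to\overline{\Delta}$, which is a homeomorphism because $\overline{\Delta}$ is compact. Now Lemma~\ref{top-equ} applies to $(S,\widetilde{G}_{\theta})$ and shows that $S$ is topologically equivalent to $\widetilde{G}_{\theta}$, finishing the proof. The only real obstacle is this last verification — that, despite the square-root branch in its definition, $\widetilde{G}_{\theta}$ genuinely restricts to a homeomorphism of $\overline{\Delta}$ — and it is precisely the oddness of $\widetilde{G}_{\theta}$ (Lemma~\ref{final}) together with $\widetilde{g}_{\theta}(0)=0$ that makes this work; everything else is a routine unwinding of the definitions.
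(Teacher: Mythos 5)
Your proof is correct and follows the same route as the paper: the first equivalence is the formal consequence of $S=R_{t}\circ T\circ\eta$ with $\eta$ a plane homeomorphism, and the second is an application of Lemma~\ref{top-equ} after checking that $S$ and $\widetilde{G}_{\theta}$ agree off $\Delta$ and each restricts to a homeomorphism of $\overline{\Delta}$. The paper simply asserts these verifications as immediate from the definitions, whereas you spell them out (in particular the well-definedness of the square-root branch via oddness), which is a faithful filling-in of the same argument rather than a different one.
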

\begin{proof}
The  topological equivalence between $S$ and $T$ follows directly
from Definition~\ref{pt} and (\ref{S}).  From (\ref{ff-rr}),
(\ref{ff-ll}), (\ref{dg}) and the definition of $S$, it follows that
$S$ and $\widetilde{G}_{\theta}$ coincide with each other in the
outside of the unit disk, and are both homeomorphisms in the inside
of the unit disk. By Lemma~\ref{top-equ}, $S$ and
$\widetilde{G}_{\theta}$ are topologically equivalent with each
other.
\end{proof}

\begin{lem}\label{sin}
$T_{\theta}$ is topologically equivalent to $T$.
\end{lem}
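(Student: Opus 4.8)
The plan is to obtain Lemma~\ref{sin} as a purely formal consequence of Lemma~\ref{PH}, Lemma~\ref{tran}, and the observation that topological equivalence of maps of the plane is an equivalence relation. First I would record transitivity: if $f = \alpha_{2}^{-1}\circ g\circ\alpha_{1}$ and $g = \beta_{2}^{-1}\circ h\circ\beta_{1}$ for homeomorphisms $\alpha_{i},\beta_{i}$ of $\Bbb C$, then $f = (\beta_{2}\circ\alpha_{2})^{-1}\circ h\circ(\beta_{1}\circ\alpha_{1})$, so $f$ is topologically equivalent to $h$; reflexivity and symmetry being immediate from the definition, topological equivalence is an equivalence relation.

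Next I would use Lemma~\ref{PH}, which gives $T_{\theta} = \phi\circ\widetilde{G}_{\theta}\circ\phi^{-1}$, where $\phi\colon\Bbb C\to\Bbb C$ is the David homeomorphism fixing $0$ and $\infty$ and sending $1$ to $\pi/2$; in particular $\phi$ and $\phi^{-1}$ are homeomorphisms of the complex plane. Taking $\theta_{1}=\theta_{2}=\phi^{-1}$ in the definition of topological equivalence, one has $T_{\theta} = \theta_{2}^{-1}\circ\widetilde{G}_{\theta}\circ\theta_{1}$, so $T_{\theta}$ is topologically equivalent to $\widetilde{G}_{\theta}$. Now Lemma~\ref{tran} already asserts that $\widetilde{G}_{\theta}$ is topologically equivalent to $S$ and that $S$ is topologically equivalent to $T$. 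Chaining these three equivalences by the transitivity just noted yields that $T_{\theta}$ is topologically equivalent to $T(z)=\sin(z)$, which is exactly the statement of the lemma.

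I do not expect a genuine obstacle here: all the substantive content — the fact that $T_{\theta}$ is entire and satisfies the conjugacy relation $T_{\theta}=\phi\circ\widetilde{G}_{\theta}\circ\phi^{-1}$, and the equivalences among $\widetilde{G}_{\theta}$, $S$ and $T$ — has already been established in Lemmas~\ref{PH}, \ref{top-equ} and \ref{tran}. The only point that merits a moment's care is the bookkeeping: one must check that every conjugating map occurring in the argument (namely $\phi$, $\phi^{-1}$, and the homeomorphisms produced in the proofs of Lemma~\ref{top-equ} and Lemma~\ref{tran}) is a homeomorphism of the whole complex plane, equivalently a homeomorphism of $\widehat{\Bbb C}$ fixing $\infty$; this is precisely what those lemmas supply, so the chaining is legitimate.
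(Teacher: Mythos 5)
Your proposal is correct and follows essentially the same route as the paper: the paper's proof likewise notes that $T_{\theta}=\phi\circ\widetilde{G}_{\theta}\circ\phi^{-1}$ makes $T_{\theta}$ topologically equivalent to $\widetilde{G}_{\theta}$ and then invokes Lemma~\ref{tran} together with transitivity. You merely spell out the transitivity bookkeeping more explicitly, which the paper leaves implicit.
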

\begin{proof}
From (\ref{nnff}) it follows that $T_{\theta}$ is topologically
equivalent to $\widetilde{G}_{\theta}$. By Lemma~\ref{tran}
$\widetilde{G}_{\theta}$  and $T$
 are topologically equivalent.  The lemma
follows.
\end{proof}

Now it is the time to prove the Main Theorem.
\begin{proof}
 By Lemmas~\ref{DS} and ~\ref{sin}, it follows that $T_{\theta}(z) = a + b\sin(cz +
d)$ holds for all $z \in \Bbb C$ where $a, b, c, d \in \Bbb C$ and
$b, c \ne 0$. Since $T_{\theta}$ is odd by Lemma~\ref{PH}, we get
\begin{equation}\label{odd-sin}
a + b\sin(c z + d) \equiv -a + b\sin(c z - d).
\end{equation}
Now by differentiating both sides of (\ref{odd-sin}), we get
$$
\cos(cz + d) \equiv \cos(cz - d).
$$
 It follows that
$$
\sin (d)\sin(cz) \equiv 0.
$$
Since $c \ne 0$, it follows that $d = k\pi$ for some integer $k$.
Therefore, we may assume that $T_{\theta}(z) = a + b \sin(cz)$ for
some $b, c \ne 0$. Since $T_{\theta}(0) = 0$, it follows that $a =
0$. This implies that $T_{\theta}(z) = b \sin (cz)$.

Since $T_{\theta}'(\pi/2) = 0$, it follows that $c$ is some odd
integer. By changing the sign of $b$, we may assume that $c$ is
positive. Suppose $c = 2 l +1$ for some integer $l \ge 0$. Let
$\Omega_{0}$ be the Siegel disk of $T_{\theta}$ centered at the
origin. For $k \in \Bbb Z$, let
$$
\Omega_{k} = \{z+ k\pi\big{|}\: z \in \Omega_{0}\}.
$$ Since
$T_{\theta}$ is odd by Lemma~\ref{PH}, $\Omega_{0}$ is symmetric
about the origin. That is, $\Omega_0 = - \Omega_0$.  It follows that
$T_{\theta}(\Omega_{k}) = (-1)^k \Omega_{0} = \Omega_0$. Therefore
each $\Omega_{k}$ is a component of $T_{\theta}^{-1}(\Omega_{0})$.

Let $D_{k}, k \in {\Bbb Z}$, be the domains  in
Corollary~\ref{dis-con}. By Corollary~\ref{dis-con}, the domains
$D_k$, $k \in \Bbb Z$, are all the components of $T^{-1}(\Delta)$.
 Recall that $D = D_{0}$.  Let $\psi: \widehat{\Bbb C}
\setminus \overline{\Delta} \to \widehat{\Bbb C}\setminus
\overline{D}$ be the map defined immediately after
Corollary~\ref{dis-con}.  We can extend $\psi$ to a homeomorphism
from $\widehat{\Bbb C}$ to itself. Let us still denote it by $\psi$.
Then $\psi (\Delta) = D_0$.  By the definitions of $G$, $G_{\theta}$
and  $\widetilde{G}_{\theta}$ ( cf. (\ref{ff-rr}), (\ref{ff-ll}) and
(\ref{dg})),  the domains $\psi^{-1} (D_{k})$, $k  \in \Bbb Z$, are
all the components of $\widetilde{G}_{\theta}^{-1}(\Delta)$. Let
\begin{equation}\label{dpi}
\widetilde{\Omega}_{k} = \phi \circ \psi^{-1} (D_{k}), \:\:k \in
\Bbb Z. \end{equation} Then  $\widetilde{\Omega}_0 = \Omega_0$ is
the Siegel disk of $T_{\theta}$ centered at the origin. From
(\ref{nnff})  it follows  that the domains $\widetilde{\Omega}_k$
are all the components of $T_{\theta}^{-1}(\widetilde{\Omega}_0) =
T_{\theta}^{-1}(\Omega_0)$. From (\ref{dpi}) and
Corollary~\ref{dis-con}, we have
\begin{itemize}
\item[1.] every $\partial \widetilde{\Omega}_{k}$ contains exactly two critical
points of $T_{\theta}$, and in particular, $\partial
\widetilde{\Omega}_0 = \partial \Omega_0$  contains $\pi/2$ and
$-\pi/2$,
\item[2.] $\partial \widetilde{\Omega}_{k} \cap
\partial \widetilde{\Omega}_{j} = \emptyset$ if $|k-j| > 1$,
\item[3.]  any critical point of $T_{\theta}$  belongs to the boundaries of exactly two domains, $\partial \widetilde{\Omega}_{k}$ and $\partial
\widetilde{\Omega}_{k+1}$, where  $k \in {\Bbb Z}$ is some integer,
and for every $k \in {\Bbb Z}$, $\partial \widetilde{\Omega}_{k}
\cap
\partial \widetilde{\Omega}_{k+1}$ contains a single point which is a critical
point of $T_{\theta}$.

\end{itemize}

Since every $\Omega_k$ is a component of
$T_{\theta}^{-1}(\Omega_0)$,  every $\Omega_{k}$ must be identical
with some $\widetilde{\Omega}_{j}$.

 $\bold{Claim:}$ $\Omega_{k} =
\widetilde{\Omega}_{k}$ for all $k \in {\Bbb Z}$.   Let us prove the
$\bold{Claim}$. We have already known that $\Omega_{0} =
\widetilde{\Omega}_{0}$. Since $-\pi/2 \in \partial \Omega_0$ and
$\Omega_1 = \Omega_0 + \pi$, we get $\pi/2 \in \partial \Omega_{1}$.
Since  $\pi/2 \in
\partial D_{1}$ and $\phi \circ \psi^{-1}(\pi/2) = \pi/2$, we get $\pi/2 \in \phi\circ \psi^{-1} (\partial D_1) = \partial \widetilde{\Omega}_1$.
By the third property above,  among $\partial
\widetilde{\Omega}_{k}s$, only  $\partial \widetilde{\Omega}_{0}$
and $\partial \widetilde{\Omega}_{1}$ contain $\pi/2$.  Since
$\Omega_1$ must be identical with one of the
$\widetilde{\Omega}_{k}s$,  $\Omega_1$ must be identical with either
$\widetilde{\Omega}_0$ or $\widetilde{\Omega}_1$.  Since
${\Omega}_1$  can not be identical with $\widetilde{\Omega}_0 =
\Omega_0$, it must be identical with $\widetilde{\Omega}_1$.

Now assume for some $ k \ge 1$, we have $\Omega_i =
\widetilde{\Omega}_i$ for $0 \le i \le k$.  Let us prove
$\Omega_{k+1} = \widetilde{\Omega}_{k+1}$. By the second property
above, among the $\partial \widetilde{\Omega}_{i}s$, only $\partial
\widetilde{\Omega}_{k-1}$ and $\partial \widetilde{\Omega}_{k+1}$
intersect with $\partial \widetilde{\Omega}_{k} = \partial
\Omega_k$. Since $\partial \Omega_{k+1}$ intersect with $\partial
\widetilde{\Omega}_{k} = \partial \Omega_{k}$ at the point $\pi/2 +
k \pi$, it follows that ${\Omega}_{k+1}$ must be identical with
either $\partial \widetilde{\Omega}_{k-1}$ or  $\partial
\widetilde{\Omega}_{k+1}$. Since $\partial \widetilde{\Omega}_{k-1}
= \Omega_{k-1}$ by assumption, we get ${\Omega}_{k+1} =
\widetilde{\Omega}_{k+1}$. By induction we get $\Omega_k =
\widetilde{\Omega}_{k}$ for all $k \ge 0$.  Using the same argument
we can get $\Omega_k = \widetilde{\Omega}_{k}$ for all $k \le 0$.
This implies that
$$
\Omega_k = \widetilde{\Omega}_{k} \hbox{  for all  } k \in \Bbb Z.
$$
The $\bold{Claim}$  has been proved.

Note that  $\partial \widetilde{\Omega}_{k}\cap
\partial \widetilde{\Omega}_{k+1} = \partial \Omega_k \cap \partial
\Omega_{k+1} = \{\pi/2 + k \pi\}$ for all $k \in \Bbb Z$. From the
property (3) above every critical point of $T_{\theta}$ is contained
in the boundary of some $\widetilde{\Omega}_k$. It follows that the
set of the critical points of $T_{\theta}$ is equal to
$$\{\pi/2 + k \pi\:\big{|}\: k \in {\Bbb Z}\}.$$ This implies that
$c = 1$. It follows that $b = e^{2 \pi i \theta}$ and therefore
$T_{\theta}(z) = f_{\theta}(z)$. By the construction, the Siegel
disks of $T_{\theta}$ centered at the origin is a Jordan curve
passing through exactly two critical points, $\pi/2$ and $-\pi/2$.
This completes the proof of the Main Theorem.

\end{proof}
\section{The integrability of $\nu$}
\subsection{Reduce the proof of the integrability of $\nu$ to the Main Lemma}
\begin{pro}\label{poo8}{\rm
The Main Lemma implies the integrability of $\nu$.}
\end{pro}
Let us first prove an immediate lemma.
\begin{lem}\label{power law}
If there exist $C_2 > 0$, $0< \delta_2 < 1$ and an integer $N  \ge
N_{0}$ such that $area(X_{n}) < C_2 \cdot  \delta_2^{n}$ holds for
all $n \ge N$, then $\nu$ satisfies the
condition~(\ref{integrability-p}) for some constants $M, \alpha > 0$
and $0< \epsilon_0 < 1$.
\end{lem}
\begin{proof}
Suppose there exist $C_2 > 0$, $0< \delta_2 < 1$ and an integer $N
\ge N_{0}$ such that $area(X_{n}) < C_2 \cdot \delta_2 ^{n}$ holds
for all $n \ge N$.    Let $\lambda > 1$ be the constant in
Lemma~\ref{Yoccoz extension}. Let
\begin{equation}\label{choice-e} \epsilon_{0} = \min\{
 \frac{2}{\lambda N + 1}, \frac{1}{1 +
\lambda}\}.
\end{equation}

Let  $0 < \epsilon < \epsilon_{0}$.  Since $\epsilon_0 \le
\frac{2}{\lambda N + 1}$ by (\ref{choice-e}),   there exists a
unique integer $n \ge N$  such that
\begin{equation}\label{tk}
\frac{2}{\lambda (n+1) + 1} \le  \epsilon <  \frac{2}{\lambda n +
1}.
\end{equation}
From the left hand of (\ref{tk}) we have
$$
n \ge \frac{2}{\lambda \epsilon} - \frac{1}{\lambda} -1.
$$
Since $0< \epsilon < \epsilon_0$ and $\epsilon_0 \le \frac{1}{1 +
\lambda}$ by (\ref{choice-e}) we have
$$
\frac{1}{\epsilon} > \frac{1}{\epsilon_0} \ge  1 + \lambda.
$$
From the above two inequalities we have
\begin{equation}\label{fgaq}
 n \ge  \frac{1}{\lambda\epsilon} + \frac{1}{\lambda\epsilon} - \frac{1}{\lambda} - 1   =
 \frac{1}{\lambda \epsilon} + \frac{1}{\lambda}(\frac{1}{\epsilon} - 1 - \lambda)
\ge \frac{1}{\lambda \epsilon}.
\end{equation}

 Now suppose $z \in \Bbb C$ is an arbitrary  point such that $|\nu(z)|
> 1 - \epsilon$.  Since $\epsilon < \frac{2}{\lambda n + 1}$ by the right hand of
(\ref{tk}) we have
$$
|\nu(z)| > \frac{\lambda n -1}{\lambda n + 1}.
$$
This implies that the dilatation at $z$ is greater than $\lambda n$.
From the  third assertion of Lemma~\ref{Yoccoz extension} and the
definition of  $X_n$, we have
$$
z \in Y_{n+2} \cup X_{n+2}.
$$
Since $z$ is an arbitrary point such that $|\nu(z)|
> 1 - \epsilon$, the above inclusion relation implies $$
area(\{z|\:  |\nu(z)| > 1 - \epsilon \} \le area (Y_{n+2}) +
area(X_{n+2}). $$  By (\ref{area-in}) we have $$area (Y_{n+2})
 < C_{1} \cdot  \delta_{1}^{n+2} $$ where $1< C_1 < \infty$ and $0<
\delta_1 < 1$ are some constants depending only on $g_\theta$.  By
the assumption of the lemma, we have $$area(X_{n+2})  <  C_{2} \cdot
\delta_{2}^{n+2}. $$

Take $M = C_1 + C_2$ and $\alpha = \frac{1}{\lambda} \cdot \log
\delta^{-1}$ where $\delta = \max\{\delta_1, \delta_2\}$. Note that
$n \ge \frac{1}{\lambda \epsilon}$ by (\ref{fgaq}). From all the
above we get
$$
area(\{z|\:  |\nu(z)| > 1 - \epsilon \}\le area (Y_{n+2}) +
area(X_{n+2}) $$$$ <  C_{1} \delta_{1}^{n+2} + C_{2}
\delta_{2}^{n+2} <  M\delta^{n}  = M e^{- n \log \frac{1}{\delta}}
\le M e^{- \frac{\log \frac{1}{\delta}}{\lambda \epsilon}}  = M
e^{-\frac{\alpha}{\epsilon}}.
$$
 This completes the
proof of Lemma~\ref{power law}.
\end{proof}

Let us prove Proposition~\ref{poo8} now.

\begin{proof}
Assume the condition of the Main Lemma holds. We may assume that
$\epsilon
> \sqrt{\delta}$.  This is because  otherwise we may let $\epsilon =
\frac{1+\sqrt{\delta}}{2}$. Then $\epsilon > \sqrt{\delta}$ and the
condition (\ref{pz-cci}) still holds. Let
$$
\sigma = \frac{C}{ \epsilon^{2} - \delta}.
$$
Since $\epsilon > \sqrt{\delta}$ we have $\sigma > 0$. From
(\ref{pz-cci}) we have
\begin{equation}\label{ddse}
area(X_{n+2}) - \sigma \epsilon^{n+2} \le \delta ( area(X_{n}) -
\sigma \epsilon^{n}).
\end{equation}
Note that  $area(\Bbb C) = \pi$. In particular, we have
$area(X_{N_{1}}) \le \pi$.

Let $n \ge N_1$. Then there are two cases.

In the first case, $n  = N_{1} + 2k$ for some integer $k \ge 0$.
From (\ref{ddse}) we have
$$
area(X_{n}) \le \delta^{k}(area(X_{N_{1}}) -\sigma \epsilon^{N_{1}})
+ \sigma \epsilon^{n}
$$
$$
\le \pi \delta^{k}  + \sigma \epsilon^{n}
$$
Note that $k = (n-N_{1})/2$ and  $\sqrt\delta < \epsilon$ by
assumption. We thus have
$$
area(X_{n}) \le \pi \delta^{k}  + \sigma \epsilon^{n} <  \pi
\delta^{-N_{1}/2} \epsilon^{n} + \sigma \epsilon^{n} < (\pi
\delta^{-N_{1}/2} + \sigma) \epsilon^{n}.
$$
In the second case,  $n  = N_{1} +1 + 2k$ for some integer $k \ge
0$. Using the same reasoning we have
$$
area(X_{n}) \le \pi \delta^{k}  + \sigma \epsilon^{n} <  \pi
\delta^{-(N_{1}+1)/2} \epsilon^{n} + \sigma \epsilon^{n} < (\pi
\delta^{-(N_{1}+1)/2} + \sigma) \epsilon^{n}.
$$
Let $C_{2} =  \pi \delta^{-(N_{1}+1)/2} + \sigma$ and $\delta_{2} =
\epsilon$.  Thus in both the cases, we  have
$$
area(X_{n}) < C_{2} \delta_{2}^{n}.
$$
for all $n \ge N_{1}$. The integrability of $\nu$ then follows from
Lemma~\ref{power law}.  This completes the proof of
Proposition~\ref{poo8}.
\end{proof}

\subsection{Proofs of Lemma~\ref{covering lemma} and Corollary~\ref{spherical area}}
Let us first prove Lemma~\ref{covering lemma}.

\begin{proof}

Let us simply denote $B_{r_{i}}(x_{i})$ by $B_{i}$ for all $i \in
\Lambda$.   Let $\Lambda_0 \subset \Lambda$  be the subset which
consists of all those $i$ such that $B_{i}$ is maximal, that is,
$B_{i}$ is not contained in any other $B_{j}$.   Let $\Sigma$ be the
class which consists of all the non-empty subsets of $\Lambda_0$
such that for every $\sigma \in \Sigma$, the sets
$$
B_{i},\:\: i \in \sigma
$$ are disjoint with each other. Clearly any subset of $\Lambda$
which contains exactly one element must belong to $\Sigma$.    This
means that $\Sigma$ is not empty. Since $\Lambda$ and thus
$\Lambda_0$ is finite, $\Sigma$ is a finite set.  Thus there exist a
$\sigma_{0} \in \Sigma$ such that
$$
m (\bigcup_{i \in \sigma_{0}} B_{i}) = \max_{\sigma\in\Sigma} m
(\bigcup_{i \in \sigma} B_{i})
$$
where $m(\cdot)$ denotes the Euclidean area.

Let $L = 8 K + 9$. Now let us prove that  for any $i \in \Lambda$,
there is some $j \in \sigma_{0}$ with $U_{i} \subset
B_{Lr_{j}}(x_{j})$.  Since $U_i \subset B_{Kr_i}(x_i)$ by
(\ref{m-qq}), it suffices to prove that
\begin{equation} \label{provd}B_{Kr_i}(x_i) \subset B_{Lr_{j}}(x_{j})  \hbox{     for
some  }j \in \sigma_0.\end{equation}  We need only prove
(\ref{provd}) for $i \in \Lambda_0$. This is because  if $i \notin
\Lambda_0$, then there is an $i' \in \Lambda_0$ such that
$B_{r_i}(x_i) \subset B_{r_{i'}}(x_{i'})$. Since $K > 1$,  it
follows that  $B_{Kr_i}(x_i) \subset B_{Kr_{i'}}(x_{i'})$.  If
(\ref{provd}) holds for some $j \in\sigma_0$ and $i' \in \Lambda_0$,
then $B_{Kr_{i}}(x_{i}) \subset B_{Kr_{i'}}(x_{i'}) \subset
B_{Lr_j}(x_j)$.

We  may further assume that $i \notin \sigma_{0}$.  This is because
if $i \in \sigma_{0}$, by (\ref{m-qq}) we have $U_{i} \subset
B_{Kr_{i}}(x_{i})\subset B_{Lr_{i}}(x_{i})$. So in the following we
assume that $i \in \Lambda_0 \setminus\sigma_0$.

By the maximal property of $\sigma_{0}$, the disk $B_{i}$ must
intersect at least one $B_{l}$ for some $l \in \sigma_{0}$. Let
$$
\Theta = \{l \in \sigma_{0}\:\big{|}\: B_{i} \cap B_{l} \ne
\emptyset\}.
$$
It follows from the maximal property of $\sigma_{0}$ again that
\begin{equation}\label{cover area}
m(B_{i}) \le m(\bigcup_{l\in \Theta} B_{l}).
\end{equation}
(This is because otherwise, one may use $B_{i}$ to replace all the
disks $B_{l}, l \in \Theta$, then the total Euclidean area will be
increased, and this contradicts with the maximal property of
$\sigma_{0}$)

By the assumption on $\Lambda_0$ in the beginning of the proof, $B_l
\nsubseteq B_i$ and $B_i \nsubseteq B_l$ for any $l \in \Theta$. For
$l \in \Theta$,  since $B_i \cap B_l \ne \emptyset$, it follows that
the boundary circle of $B_{l}$ intersects the boundary circle of
$B_{i}$. This implies  that
\begin{equation}\label{edp}
r_{i} \le 8  \max _{l \in \Theta} r_{l}.
\end{equation}
Because otherwise, the union of $B_{l}$, $l \in \Theta$,  would be a
proper subset of the annulus
$$\{z\:\big{|}\: \frac{3}{4} r_{i} < |z - x_{i}| < \frac{5}{4}
r_{i}\},$$ whose  Euclidean area  is equal to that  of $B_{i}$. This
contradicts  with (\ref{cover area}).  Since $\Theta$ is a finite
set, there is a $j \in \Theta$  such that
\begin{equation}\label{che}
r_{j} = \max _{l \in \Theta} r_{l}.\end{equation}  Let $d(\cdot,
\cdot)$ denote the distance with respect to the Euclidean metric.
Let $z \in B_{Kr_{i}}(x_{i})$ be an arbitrary point.  Since
 $r_i \le 8 r_j$ by (\ref{edp}) and (\ref{che}), we have
$$
d(z, x_i) < K r_i < 8K r_j.
$$
Since $B_i$ intersects $B_j$ we have
$$
d(x_i, x_j) < r_i + r_j \le 8 r_j +  r_j = 9 r_j.
$$
It follows that
$$
d(z, x_j) \le d(z, x_i) + d(x_i, x_j) < (8K + 9) r_j = L r_j.
$$
Since $z$ is an arbitrary point in $B_{Kr_{i}}(x_{i})$,
(\ref{provd}) follows.
 The proof of Lemma~\ref{covering lemma} is completed.
\end{proof}

Recall that $\Omega = {\Bbb C} \setminus \overline{\Delta}$  and
${\rm diam}(\cdot)$, ${\rm dist}(\cdot, \cdot)$ denote respectively
the diameter and the distance   with respect to the Euclidean
metric. Now let us prove Corollary~\ref{spherical area}.

\begin{proof}
Let $\Lambda_0 \subset \Lambda$ and $\sigma_{0} \subset \Lambda_0$
be given as in the proof of Lemma~\ref{covering lemma}.   Let $L =
8K + 9$. Then for any $i \in \Lambda_0$, from the proof of
Lemma~\ref{covering lemma}, there is
 some $j \in \sigma_{0}$ such that
\begin{itemize}
\item[1.] $B_i \cap B_j\ne \emptyset$,
\item[2.] $B_{Kr_i}(x_i) \subset B_{Lr_j}(x_j)$.
\end{itemize}  Now for each $j \in \sigma_0$, let $\Theta_j \subset \Lambda_0$ be the subset consisting of all the  $i \in
\Lambda_0$ so that the above two properties hold. Then $\bigcup
\Theta_j = \Lambda_0$.  For each $i \in \Theta_j$, let $\tau_{i, j}
\subset \Lambda$ be the subset which consists of all those $l$ such
that $B_l \subset B_i$.  In particular, $i \in \tau_{i,j}$. For $j
\in \sigma_0$, let
$$\Delta_j = \bigcup_{i \in \Theta_j} \tau_{i,j}.$$ Then
$$
\bigcup_{j \in \sigma_0} \Delta_j = \Lambda.
$$
For $j \in \sigma_0$, let
$$
X_j = \bigcup_{l \in \Delta_j} U_l.
$$  We thus  have
\begin{equation}\label{dis-u}
\bigcup_{i \in \Lambda} U_i = \bigcup_{j\in\sigma_0} X_j.
\end{equation}
Claim 1. $$X_j \subset B_{Lr_j}(x_j), \quad \forall j \in
\sigma_0.$$ Let us prove the claim. Let $z \in X_j$ be an arbitrary
point. Then $z \in U_l$ for some $l \in \Delta_j$. Then $l \in
\tau_{i, j}$ for some $i \in \Theta_j$. Thus $B_l \subset B_i$ and
$B_{Kr_i} \subset B_{Lr_j}(x_j)$. Since $K > 1$ and $B_l \subset
B_i$, we can easily deduce that $B_{Kr_l}(x_l) \subset
B_{Kr_i}(x_i)$.  Thus $B_{Kr_l}(x_l) \subset B_{Lr_j}(x_j)$. So $z
\in U_l \subset B_{Kr_l}(x_l) \subset B_{Lr_j}(x_j)$.  Since $z$ is
an arbitrary point in $X_j$, the Claim 1 follows.

From the Claim 1, we have
\begin{equation}\label{eue}
m(X_j) \le L^2 \cdot m(B_{r_j}(x_j))
\end{equation}
where $m(\cdot)$ denotes the area with respect to the Euclidean
metric.

Claim 2. There is a constant $\eta(K)$ depending only on $K$ such
that
\begin{equation}\label{c2c2}
\frac{1 + |\omega|^2}{1 + |\zeta|^2} < \eta(K), \forall \zeta \in
X_j \hbox{  and  } \forall \omega \in B_{r_j}(x_j).
\end{equation} Note that $\zeta \in U_l$ for some $l \in \tau_{i,j}$
and $i \in \Theta_j$. Thus $U_l \cap U_i \ne \emptyset$ and $U_i
\cap U_j \ne \emptyset$. This means  to prove (\ref{c2c2}) it
suffices to prove that there exists a constant $\ell(K)$ such that
\begin{equation}\label{imc}
\frac{1 + |z|^2}{1 + |\xi|^2} < \ell(K), \forall k \in \Lambda
\hbox{  and  } \forall z, \xi \in U_k.
\end{equation}
Because one can take $\omega' \in U_j \cap U_i$ and $\zeta' \in U_i
\cap U_l$, then using (\ref{imc}) to $\omega, \omega'\in U_j$,
$\omega', \zeta' \in U_i$, and $\zeta', \zeta \in U_l$, we have
$$
\frac{1 + |\omega|^2}{1 + |\zeta|^2} = \frac{1 + |\omega|^2}{1 +
|\omega'|^2}\cdot \frac{1 + |\omega'|^2}{1 + |\zeta'|^2}\cdot
\frac{1 + |\zeta'|^2}{1 + |\zeta|^2} < \ell^3(K).
$$ This implies (\ref{c2c2}) by taking $\eta(K) = \ell^3(K)$. Now
let us prove (\ref{imc}).

We may assume that  $|\xi| \le |z|$ since otherwise the left hand of
(\ref{imc}) is less than $1$. Since $U_k \subset \Omega$ we have
$|\xi|
> 1$. Using the fact that $|\xi|
> 1$, we have
$$
\frac{1 + |z|^{2}}{1 + |\xi|^{2}}  <  \frac{1 + |z|^{2}}{ |\xi|^{2}}
<  1 + \big{(}\frac{|z|}{|\xi|}\big{)}^{2}  <  1 + \big{(}\frac{|z|
+|\xi| -1}{|\xi|}\big{)}^{2}  =  1 + \big{(}1 + \frac{|z|
-1}{|\xi|}\big{)}^{2}.
$$
Since
$$
\frac{|z|- 1}{|\xi|} < \frac{|z|-1}{|\xi| -1} = 1 + \frac{|z|-
|\xi|}{|\xi|-1} \le 1 + \frac{{\rm diam}(U_k)}{{\rm dist}(U_k, \Bbb
T)} \le 1 + K.
$$ The last inequality comes from (\ref{s-e}).
We thus have
$$
\frac{1 + |z|^{2}}{1 + |\xi|^{2}}  \le  1 + (2 + K)^{2}.
$$ This proves (\ref{imc}) by taking $\ell(K) = 1 + (2 + K)^{2}$ and
the Claim 2 follows.

 Since the spherical  area form is given by
$$\frac{|dz|^{2}}{(1+ |z|^{2})^{2}},$$ from (\ref{eue}) and (\ref{c2c2}), it follows that
$$
area(X_{j}) \le L^2 \cdot \eta^{2}(K)  \cdot area
(B_{r_{j}}(x_{j})).
$$
Since  $B_{r_j}(x_j) \subset V_j$ and all $B_{r_{j}}(x_{j})$, $j \in
\sigma_{0}$, are disjoint, from (\ref{dis-u})
Corollary~\ref{spherical area} then follows by taking
$$\lambda(K) = L^{2} \cdot \eta^{2}(K).$$

\end{proof}
\subsection{Proof of Lemma~\ref{hn-ps}}

\begin{proof}  Let $$\Sigma =
\Bbb C \setminus (\Bbb R \setminus  (-1, 1))$$ denote the slit
plane.  For $d > 0$, let $U_d((-1, 1))$ denote the hyperbolic
neighborhood of $(-1, 1)$ in $\Sigma$ with hyperbolic distance $d$.
It is known that the boundary of $U_d((-1,1))$ is the union of two
arc segments of Euclidean circles which are symmetric about $\Bbb
R$, and moreover, the exterior angle $\alpha$ formed by the boundary
arc of $U_d((-1,1))$ and $\Bbb R$ is determined by the formula $d =
\ln \cot(\frac{\alpha}{4})$ (cf. \cite{Ya}).

For an open arc $I \subset \Bbb T$, let $\Omega_{I}$ denote  the
hyperbolic Riemann surface defined by (\ref{slt}).  Without loss of
generality,  we may assume that $i$ is the mid-point of $I$. Since
$\Sigma$ is simply connected, there exists a unique holomorphic
covering map
$$
\pi: \Sigma \to \Omega_{I}
$$
with  $\pi(0) = i$ and $\pi'(0) > 0$.

Claim 1.  $\pi$ maps $(-1, 1)$ homeomorphically to $I$, and
moreover, $\pi$ is symmetric in the following sense
\begin{equation}\label{symmetric} \pi(\bar{z}) = (\pi(z))^{*},
\:\:\forall \:z \in \Sigma,\end{equation} where $w^{*} = 1/\bar{w}$
denotes the symmetric image of $w$ about $\Bbb T$. Let us prove the
Claim 1 as follows. Considering the  map $\pi^*: \Sigma \to
\Omega_I$ given by
$$
\pi^*: z \mapsto (\pi (\bar{z}))^{*}.
$$
It is easy to check that $\pi^*$ is a holomorphic covering map  with
$\pi^{*}(0) = i$ and $(\pi^{*})'(0) > 0$. Thus $\pi^{*} = \pi$. This
implies (\ref{symmetric}). In particular, $\pi(z) \in I$ for $z \in
(-1, 1)$. Since $\pi$ is a local isomorphism, $\pi$ must maps $(-1,
1)$ homeomorphically to $I$. This proves the Claim 1.

Recall that $i$ is the mid-point of $I$. Let $B_{1}(i)$ denote the
Euclidean disk with center $i$ and radius $1$.  Let $S =
B_{1}(i)\cap \Bbb T$. Then $\overline{I} \subset S$ when $I$ is
small.

Claim 2. There exist $a$ and $b$ with $ a< -1 < 1 <
  b$ and a simply connected  domain $U\subset \Bbb C$  such that
\begin{itemize}
\item[1.] $U \cap \Bbb R = (a, b)$,
\item[2.] $\pi$ can be holomorphically extended across $(a, -1] \cup [1, b)$, and  $\pi: U \to
B_{1}(i)$ is a holomorphic isomorphism which
 maps $(a, b)$ homeomorphically to $S$.
 \end{itemize}  Let us prove the Claim 2 as follows.   Let $\sigma$
denote  the branch of the inverse of $\pi$ which
 sends $I$ to $(-1, 1)$.  Since $B_{1}(i) \cap \Omega_I$ is simply connected,  $\sigma$ can be holomorphically
 extended to $B_{1}(i)\cap \Omega_I$.

 Let us first prove that
 $\sigma$ can be continuously extended to $S \setminus I$ from
 both  the inside and the outside of $\Delta$. We prove this by contradiction. Suppose this
 were not true. Then there would be a point $z \in S \setminus I$
 and a continuous curve $\gamma: [0, 1) \to B_{1}(i)\cap \Omega_I$
 such that as $t \to 1$, $\gamma(t) \to z$ and
 $(\sigma \circ  \gamma)(t)$ accumulates on a non-trivial boundary segment of
 $\Sigma$.  Let $\tau:  \Sigma \to \Delta$ and $\eta: \widehat{\Bbb C} \setminus (\Bbb T \setminus I)
\to \Delta$ be two holomorphic isomorphisms. Then
$$
h = \eta \circ \pi \circ \tau^{-1}: \Delta \to \Delta
$$
is a bounded and holomorphic map.  It is easy to see that $h(\Delta)
= \Delta \setminus \{\eta(\infty), \eta(0)\}$. In particular, $h$ is
not a constant. Let $\xi: [0, 1) \to \Delta$ be a continuous curve
given by
$$
\xi(t) = \tau \circ \sigma \circ \gamma(t), \quad \forall t \in
[0,1).
$$
Note that  each point in $\partial \Sigma$, except $1$ and $-1$,
corresponds to two equivalent classes of null chains of cross cuts
in $\Sigma$: one of them is contained in the upper half plane, and
the other one is contained in the lower half plane.  Each of the two
points $1$ and $-1$ corresponds to exactly one equivalent class of
null chains of cross cuts. Thus each point in $\partial \Sigma
\setminus \{1, -1\}$ represents two distinct primes ends, and each
of the two points $1$ and $-1$ represents exactly one prime end.
Since as $t \to 1$, $(\sigma \circ \gamma)(t)$ accumulates on a
non-trivial boundary segment of $\Sigma$, and since $\tau: \Sigma
\to \Delta$ is a holomorphic isomorphism, by Caratheodory's theory
on prime ends, it follows that as $t \to 1$, $\xi(t) = \tau \circ
\sigma \circ \gamma (t)$ accumulates to some non-trivial arc segment
of $\Bbb T$. Now we will show that as $t \to 1$, $h(\xi(t))$
converges to a point in $\Bbb T$.  To see this, note that
 for the
domain $\widehat{\Bbb C} \setminus (\Bbb T \setminus I)$, each point
in $\Bbb T \setminus I$, except the two end points of $I$,
represents two distinct prime ends (one of them corresponds to the
null chain of cross cuts inside $\Delta$, and the other one
corresponds to the null chain of the cross cuts outside $\Delta$),
and each of the two end points of $I$ represents exactly one prime
end. Since as $t \to 1$, $\gamma(t)$ converges to a point in $z \in
S \setminus I \subset \Bbb T \setminus I$, and since $\eta:
\widehat{\Bbb C} \setminus (\Bbb T \setminus I) \to \Delta$ is a
holomorphic isomorphism, again by Caratheodory's theory on prime
ends,
$$h(\xi(t))= \eta \circ \pi \circ \tau^{-1} \circ \tau \circ \sigma
\circ \gamma (t) = \eta \circ \gamma(t) $$  converges to some point
in $\Bbb T$.   Since $h$ is a bounded and holomorphic function in
$\Delta$, and $\xi(t)$ accumulates on some non-trivial arc segment
of $\Bbb T$ as $t \to 1$,  this would imply that $h$ is a
constant(cf. Lemma 4.3, Chapter XIV of \cite{SL}). But this is a
contradiction. This implies that $\sigma$ can be continuously
extended to $S \setminus I$ from both the inside and the outside of
$\Delta$. Since $\pi$, and thus $\sigma$ is symmetric about $\Bbb
T$, the two extensions must coincide on $S \setminus I$. Thus
$\sigma$ can be extended to a holomorphic function in $B_1(i)$.

 Let us prove that
$\sigma$ is injective in $B_{1}(i)$. Suppose this were not true.
Since $\sigma$ is injective in $B_{1}(i) \cap \Omega_I$,  we would
either have two distinct points $z, z' \in S$ with  $\sigma(z) =
\sigma(z')$ or have two  points  $z' \in S$ and $z \in B_1(i)
\setminus S$ with $\sigma(z) = \sigma(z')$.  If the first
possibility occurs, since $\sigma(S) \subset \Bbb R$ by symmetry,
there would be a $\zeta \in S$ such that $\sigma'(\zeta) = 0$. This
implies that in a small neighborhood of $\zeta$, we could find two
distinct points $\omega$ and $\omega'$ in $B_{1}(i) \cap \Omega_I$
such that $\sigma(\omega) = \sigma(\omega')$. This is a
contradiction with the injectivity of $\sigma$ in $B_1(i) \cap
\Omega_I$.  If the second possibility occurs,  by symmetry we would
have $\sigma(z^*) = \overline{\sigma(z)} = \overline{\sigma(z')} =
\sigma(z') = \sigma(z)$ (The equation $\overline{\sigma(z')} =
\sigma(z')$ comes from the fact that $ \sigma(z') \in \Bbb R$).
Since both $z$ and $z^{*}$ belong to $B_1(i) \cap \Omega_I$,   this
contradicts with the injectivity of $\sigma$ in $B_1(i) \cap
\Omega_I$.  So $\sigma$ is injective in $B_{1}(i)$.  Let $U =
\sigma(B_1(i))$. Then $U$ is a simply connected domain.  By
symmetry, $U$ is symmetric about $\Bbb R$ and $U \cap \Bbb R =
\sigma(S)$ is an interval containing $[-1, 1]$.   Let $(a, b) =
\sigma(S)$.  The inverse $\sigma^{-1}$ realizes the extension of
$\pi$ to $U$. This proves the Claim 2.

From the Claim 2,  the domain $U = \sigma(B_{1}(i))$ contains $[-1,
1]$ and
$$
{\rm mod} (\sigma(B_{1}(i)) \setminus [-1, 1]) = {\rm mod} (B_{1}(i)
\setminus \bar{I}).
$$
Note  that ${\rm mod} (B_{1}(i) \setminus \bar{I})$, and thus ${\rm
mod} (\sigma(B_{1}(i)) \setminus [-1, 1])$,  can be arbitrarily big
provided that $I$ is small enough. So for any $R > 0$,
$\sigma(B_{1}(i))$ contains the disk $\{z\:|\: |z| < R\}$ provided
that $I$ is small.  Since ${\rm diam}(U_{d}((-1, 1))) \le C(d)$
where $C(d)
> 0$ is some constant depending only on $d$ (cf. \cite{Ya}),  for $d
> 0$ given, we have
\begin{equation}\label{ood}
\overline{U_{d}((-1, 1))} \subset \sigma(B_{1}(i))
\end{equation}
and  $${\rm mod} (\sigma(B_{1}(i)) \setminus \overline{U_d((-1,
1))})$$ can be arbitrarily  big provided that $I$ is small enough.
  Since $\pi: \Sigma \to \Omega_I$ preserves the hyperbolic
metric, and $\pi: \sigma(B_{1}(i)) \to B_{1}(i)$ is a holomorphic
isomorphism,  from (\ref{ood})  it follows that $$ \pi(U_{d}([-1,
1]) = \Omega_d(I) \subset B_1(i).$$   It follows that $\Omega_d(I)$
is symmetric and simply connected. Since $\partial \Omega_d(I)$ is
the $\pi$-image of the union of two arc segments of Euclidean
circles which are symmetric about $\Bbb R$, by the symmetry of $\pi$
(see (\ref{symmetric})), $\partial \Omega_d(I)$ is the union of two
real analytic curve segments which are symmetric about $\Bbb T$.
Since
$${\rm mod} (\sigma(B_{1}(i)) \setminus \overline{U_d((-1, 1))})$$
can be arbitrarily  big provided that $I$ is small enough,  the
distortion of
 $\pi$ in $\overline{U_{d}((-1, 1))}$ can be arbitrarily  small provided that $I$ is small enough. This shows that the domain
 $\Omega_{d}(I)$ is more and more like $U_{d}((-1, 1))$ as $I$ becomes  smaller and smaller. Since $\pi$ is conformal, the
 four exterior angles  formed by the boundary of $\Omega_{d}(I)$ and the unit circle, all of which are the same and denoted as $\alpha$,
 are
 equal to those formed by the boundary of $U_{d}((-1, 1))$ and the real
 line.  Lemma~\ref{hn-ps} follows.
\end{proof}

\subsection{Reduce the Main Lemma to Lemma~\ref{pre-lem}}

%%%%%%%%%%%%%%%%%%%%%%%%%%%%%%%%%%%%%%%%%%%%%%%%%%%%%%%%%%%%%%%%%
%%%%%%%%%%%%%%%%%%%%%%%%%%%%%%%%%%%%%%%%%%%%%%%%%%%%%%%%%%%%%%%%%

\begin{pro}\label{mred}{\rm
Lemma~\ref{pre-lem} implies the Main Lemma.}
\end{pro}
\begin{proof}
Let $R > 1$. Define
$$
X_{n+2}^{R} = \{z \in X_{n+2} \:\big{|}\:  1  \le  |z| \le  R  \}.
$$
It is clear that $X_{n+2}^R$ is a Lebesgue measurable set. For any
$\eta > 0$, by a standard result in Lebesgue measure theory (cf.
Page 127, Ex. 3.43 of \cite{MW}), there is a closed set $F \subset
X_{n+2}^R$ such that \begin{equation}\label{rct} area(X_{n+2}^{R}
\setminus F) < \eta.\end{equation}  Since $Z_{n}$ is open and $F$ is
a bounded and closed set, it follows that $F\setminus  Z_{n}$ is a
compact set.

For any point $x \in F \setminus Z_{n}$,  by Lemma~\ref{pre-lem},
$x$ is associated to some $K$-admissible pair $(U_i, V_i)$ for some
uniform $1 < K < \infty$. This implies that the sets $U_i s$ form an
open cover of $F \setminus Z_{n}$. Since $F \setminus Z_{n}$ is a
compact set, we have finitely many pairs $(U_{i}, V_{i})$, $i \in
\Lambda$, such that
\begin{equation}\label{oneh}
F \setminus Z_{n} \subset \bigcup_{i \in \Lambda} U_{i},
\end{equation}

By Theorem~\ref{SH}   and Lemma~\ref{hn-ps},  there exist constants
$L_1, L_2
> 1$ and $0 < \epsilon < 1$ independent of $n$ such that for any
interval $I$ in the dynamical partition of level $n$, we have (1)
$|I| < L_1 \cdot \epsilon^n$  and (2)  ${\rm diam}(H_{\alpha}(I))
<L_2 \cdot |I|$.  From the definition of $Z_n$ (cf. (\ref{ZN})) that
$Z_n$ is contained in the annulus $\{z\:|\: 1 < |z| < 1 + L_1 \cdot
L_2 \cdot \epsilon^n\}$. Thus we get
\begin{equation}\label{m-z}
area(Z_{n}) < C \epsilon^{n}
\end{equation} where $1 < C < \infty$  is some constant independent
of $n$.

We now claim that there is a $0< \delta < 1$ such that
\begin{equation}\label{m-e-1}
area (X_{n+2}) \le C \epsilon^{n}  +  \delta \:area(X_{n}).
\end{equation}
Let us prove the claim now.  From (\ref{rct}) and (\ref{oneh}),  we
have
\begin{equation}\label{m-e-2}
area ({X_{n+2}^{ R}}) \le area(Z_{n})  + area(\bigcup_{i \in
\Lambda} U_{i}) + \eta.
\end{equation}
By  Corollary~\ref{spherical area}, we have
\begin{equation}\label{m-e-3}
area(\bigcup_{i \in \Lambda} U_{i}) \le \lambda(K) \cdot
area(\bigcup_{i \in \Lambda} V_{i}).
\end{equation}
Since $V_i \subset X_{n} \setminus X_{n+2}$ for all $i \in \Lambda$,
we have
$$
area(\bigcup_{i \in \Lambda} V_{i}) \le area(X_{n}) - area(X_{n+2})\
\le area(X_{n}) - area(X_{n+2}^R).
$$
This, together with (\ref{m-e-3}), implies that
\begin{equation}\label{rree}
area(\bigcup_{i \in \Lambda} U_{i}) \le \lambda(K) \cdot(area(X_{n})
- area(X_{n+2}^R)).
\end{equation}
From (\ref{m-e-2}) and (\ref{rree}) we get $$(1 + \lambda(K))\cdot
area ({X_{n+2}^{ R}}) \le area(Z_{n}) + \lambda(K) \cdot area(X_{n})
+ \eta. $$  From (\ref{m-z}) and the above inequality we have
\begin{equation}\label{finale}
area ({X_{n+2}^{ R}})  \le \frac{\lambda(K)}{1 + \lambda(K)} \cdot
area(X_{n}) + \frac{C}{1 + \lambda(K)} \cdot \epsilon^n +
\frac{\eta}{1 + \lambda(K)}.
\end{equation}
Since $\eta > 0$ and $R > 1$   are arbitrary,  by letting  $R \to
\infty$ and $\eta \to 0$, we get
$$
area ({X_{n+2}})  \le \frac{\lambda(K)}{1 + \lambda(K)} \cdot
area(X_{n}) + \frac{C}{1 + \lambda(K)} \cdot \epsilon^n.
$$
The Main Lemma follows.

\end{proof}

\subsection{Proof of Lemma~\ref{pre-lem}}
For $0< \gamma < \pi/2$, let $S_{\gamma}$ denote the cone spanned at
$1$ and outside the unit disk such that the two angles formed by the
two sides of $S_\gamma$  and the unit circle are both equal to
$\gamma$. For $\wp
> 0$, let $$C_{\wp} = \{z\:|\: 1 < |z| < 1 + \wp \hbox{ and }
g_{\theta}(z) \in \Delta\}. $$  Note that  $g_{\theta}: \Bbb T \to
\Bbb T$ is a homeomorphism and the point $1$ is the only critical
point of $g_{\theta}$ in the unit circle which has local degree $3$.
The proof of the following lemma is easy and we leave it to the
reader.
\begin{lem}\label{obv}
For $\wp > 0$ small, $C_{\wp}$ is like the part of $S_{\pi/3}$ which
is contained in the annulus $\{z\:|\: 1 < |z| < 1 + \wp\}$.
Moreover, for any $\epsilon
> 0$, we have $C_{\wp} \subset S_{\pi/3 + \epsilon}$ provided that
$\wp > 0$ is small enough.
\end{lem}

\begin{figure}
\bigskip
\begin{center}
\centertexdraw { \drawdim cm \linewd 0.02 \move(-1 0)

\move(0 -2) \lcir  r:1.5

\move(1.5 -2) \lvec(1.8 -1.8) \move(1.5 -2) \lvec(1.8 -2.2)
\move(1.5 -2) \larc r: 0.38 sd:330 ed:30

\move(1.2 -2.1) \htext{$1$} \move(1.51 -2)\lvec(1.8 -1.3) \move(1.51
-2)  \lvec(1.8 -2.7)

\move(1.7 -1.3) \htext{$V_n$} \move(1.7 -3.1) \htext{$W_n$}

\move(1.95 -2.2) \htext{$C_{\wp}$}

 }
\end{center}
\vspace{0.2cm} \caption{The cone $C_{\wp}$ spanned at $1$}
\end{figure}

\begin{lem}\label{cone}
There is a $\wp_0 > 0$ such that for all $0< \wp < \wp_0$, the
following property  holds: Let  $z, z' \in \Bbb C \setminus
\overline{\Delta}$ be  two distinct points such that $1 < |z|, |z'|
< 1 + \wp$,
 and  $g_{\theta}(z) = g_{\theta}(z')$.   Then both $z$ and $z'$
belong to $B_{6 \wp}(1)$. In particular, if there is a $\zeta$
satisfying $1< |\zeta|< 1 + \wp$ and $
g_{\theta}(\zeta)=g_{\theta}(z) = g_{\theta}(z') $, then either
$\zeta = z$ or $\zeta = z'$.
\end{lem}
\begin{proof}
 The proof is by contradiction.
Suppose it were not true. Then we would have a sequence $\wp_n \to
0$ and two sequences $z_n, z_n'$ such that $1< |z_n|, |z_n| < 1 +
\wp_n$, $z_n \ne z_n'$, $g_{\theta}(z_n) = g_{\theta}(z_n')$, and
$|z_n' - 1| \ge 6 \wp_n$. By taking a subsequence, we may assume
that both $z_n$ and $z_n'$ converge. Since $g_{\theta}$ is a
homeomorphism when restricted on $\Bbb T$ and  is a local
homeomorphism at every point in $\Bbb T$ except the critical point
$1$,    the two sequences must both converge to $1$.

Let $V_n$ be the vector starting from $1$ and pointing to $z_n$, and
$W_n$  the vector starting from $1$ and pointing to $z_n'$. Since
$1$ is a double critical point, $g_{\theta}$ behaves like the cubic
map $z \mapsto g_{\theta}(1) + \lambda \cdot (z - 1)^3$ where
$\lambda  = \frac{1}{6} g_{\theta}^{'''}(1) \ne 0$.  So the angle
spanned  by $V_n$ and $W_n$ at $1$ is approximately equal to $2
\pi/3$.  Since both $V_n$ and $W_n$ are outside the unit disk, it
follows that  one of them is above $C_\wp$ and the other one is
below $C_\wp$. Let $\alpha_n$ and $\beta_n$ denote the angle formed
by $V_n$ and $\Bbb T$, $W_n$ and $\Bbb T$, respectively.  Then
$$
\alpha_n + \beta_n = \pi/3 + o(1)  \hbox{ and } |W_n| = (1 +
o(1))|V_n|.
$$ See Figure 3 for an illustration.
Note that $$\frac{|z_n| -1}{|V_n|}= \sin(\alpha_n)  (1 + o(1))
\hbox{ and  } \frac{|z_n'| -1}{|W_n|}= \sin(\beta_n)  (1 + o(1)).$$
Since $\alpha_n + \beta_n = \pi/3 + o(1)$, either $\alpha_n$ or
$\beta_n$ is greater than $\pi/8$ when $\wp_n$ is small enough.
Without loss of generality, let us assume that $|\beta_n| >  \pi/8$.
So $\sin(\beta_n) > (\pi/8) \cdot (2/\pi) = 1/4$. By assumption we
have $|W_n| = |z_n' -1| > 6 \wp_n$. Then
$$|z_n'| - 1 = |W_n| \sin(\beta_n) (1 + o(1)) \ge  6 \wp_n \cdot
\frac{1}{4} \cdot (1 + o(1))= \frac{3}{2} \wp_n (1 + o(1)).$$ This
is a contradiction with $1 < |z_n'| < 1 + \wp_n$. This implies that
both $z_n$ and $z_n'$ belong to $B_{6 \wp}(1)$ provided that $\wp >
0$ is small enough.

Repeating the same argument as above to $z$ and $\zeta$, we get
$\zeta \in B_{6 \wp}(1)$.  Let $\omega = g_{\theta}(z) =
g_{\theta}(z')$.  Then $\omega$ is near $v = g_{\theta}(1)$ if $\wp
> 0$ is small. Since the point $1$ is a double critical point of $g_{\theta}$,
there are exactly three pre-images of $\omega$ in  a  small
neighborhood of $1$: two of them are outside the unit disk, which
are $z$ and $z'$, the third one is inside the unit disk. Since
$\zeta$ is outside the unit disk and belong to $B_{6 \wp}(1)$,
$\zeta$ must be one of $z$ and $z'$. This proves Lemma~\ref{cone}.
\end{proof}

%%%%%%%%%%%%%%%%%%%%%%%%%%%%%%%%%%%%%%%%%%%%%%%%%%%%%%%%%%%
%%%%%%%%%%%%%%%%%%%%%%%%%%%%%%%%%%%%%%%%%%%%%%%%%%%%%%%%%%%
Let $v = g_\theta(1)$ and \begin{equation}\label{defdom} W =
\{\omega\:|\: |\omega - v| < 1/2 \hbox{ and }|\omega| \ge
1\}.\end{equation} Since $g_{\theta}$ has no asymptotic values and
$v$ is the unique critical value of $g_{\theta}$ in $\Bbb C^*$,  one
can define two branches of the inverse of $g_{\theta}$, say
$\Psi_{+}$ and $\Psi_{-}$ in $W$, which map $v$ to $1$ and map $W$
to the two angle domains (both the angles are equal to $\pi/3$)
spanned at $1$ and outside the unit disk. Let $\wp_0 > 0$ be the
constant guaranteed by Lemma~\ref{cone}.

Now let us  take a small $$0< \wp < \wp_0$$ such that for any $\zeta
\in B_{6 \wp}(1)$ with $\omega = |g_{\theta}(\zeta)| > 1$, we have
$\omega \in W$. Thus we have either $\zeta = \Psi_{+}(\omega)$ or
$\zeta = \Psi_{-}(\omega)$.

\begin{lem}\label{after-K} For any $d \ge 1$, $m > 0$ and $K > 1$,
there exists an $L(d, m, K) > 1$ depending only on $d$, $m$ and $K$
such that the following holds.  Suppose $\widetilde{U}$ and
$\widetilde{A}$ are two Jordan domains and $f: \widetilde{U} \to
\widetilde{A}$  is a holomorphic and properly branched covering map
of degree $d \ge 1$. Let $B \subset A \Subset \widetilde{A}$  be a
pair of Jordan domains such that $A$ contains no critical values of
$f$ and ${\rm mod}(\widetilde{A} \setminus \overline{A}) \ge m$ for
some $m > 0$. Suppose $(A, B)$ has $K$-bounded geometry. Let $V
\subset U$$(\Subset \widetilde{U})$ be a pair of Jordan domains
which are the pull backs of $B \subset A$, respectively.   Then $(U,
V)$ has $L(d, m, K)$-bounded geometry.
\end{lem}
\begin{proof}
 First note the following fact (cf. Page 959 of \cite{Sh}):  The space $$\{h: \Delta \to \Delta \:|\: h
\hbox{  is holomorphic and proper of degree } d \hbox{  and  } h(0)
= 0\}$$ is a compact set in the topology of uniform convergence on
compact sets. The lemma  then follows from this fact  and Koebe's
distortion theorem. We leave the (easy) details to the reader.
\end{proof}

\begin{lem}\label{cdr} Let  $1 < M < \infty$.
Then there exists an  $1 < L < \infty$ depending only on $M$   such
that for all  $\zeta \in X_{n+2}$ $(\subset \Bbb C \setminus
\overline{\Delta})$ with ${\rm dist}(\zeta, \Bbb T) \ge \wp$  and
all $n$ large enough, if there exist a Jordan domain $A$ and a
Euclidean disk $B$ satisfying
\begin{itemize}
\item[(1)]  $\omega = g_{\theta}(\zeta) \in A$  and  $v \notin A$,
\item[(2)] $B \subset (Y_{n} \setminus Y_{n+2})$,
\item[(3)] $B \subset A$ and ${\rm diam}(A) \le M \cdot  {\rm diam}(B)$,
\end{itemize}
then  $\zeta$ is associated to some $L$-admissible pair $(U, V)$.
\end{lem}
\begin{proof}
Assume that $n$ is large enough. By the condition (2) $B$ is a small
disk near $\Bbb T$. By  the condition (3) $A$ is a small domain near
$\Bbb T$.
 Let $\widetilde{A} =
B_{1/2}(\omega)$.  Since $\omega \in A$ and $A$ is a small Jordan
domain near $\Bbb T$, $\widetilde{A}$ does not contain $0$ and
$\infty$. Since $v$, $0$ and $\infty$ are the only critical values
of $g_\theta$ (cf. Lemma~\ref{rotation number}), $\widetilde{A}$
contains at most one critical value of $g_{\theta}$, that is, the
point $v$.  Since $g_{\theta}$ has no asymptotic values (cf.
Lemma~\ref{rotation number}), there is a Jordan domain
$\widetilde{U}$ containing $\zeta$ such that $g_{\theta}:
\widetilde{U} \to \widetilde{A}$ is either a holomorphic isomorphism
or a branched and proper covering map. In the later case,
$\widetilde{A}$ contains a single critical value and thus
$\widetilde{U}$ contains a single critical point by Riemann-Hurwitz
formula.  Note that except the critical point $1$ which has local
degree three, all other critical points of $g_{\theta}$ have local
degree two. Thus the degree of the map $g_{\theta}: \widetilde{U}
\to \widetilde{A}$ is either one, two, or three.

Let $V \subset U $ ($\Subset \widetilde{U}$) be the pull backs of $B
\subset A$ such that $\zeta \in U$.  Since $v \notin A$ and
$g_{\theta}$ has no asymptotic values, both $U$ and $V$ must be
Jordan domains. By the conditions (2) and (3) we see the Jordan
domain $A$ can be arbitrarily small provided that $n$ is large
enough. Since $\omega \in A$ by the condition (1), it follows that
$$
{\rm mod}(\widetilde{A} \setminus \overline{A}) = {\rm
mod}(B_{1/2}(\omega) \setminus \overline{A})
$$ can be arbitrarily large provided that $n$ is large enough.
Since $(A, B)$ has $M$-bounded geometry by the condition (3), and
the degree of the covering map $g_{\theta}: \widetilde{U} \to
\widetilde{A}$ is either one, two, or three, by Lemma~\ref{after-K},
the pair $(U, V)$ has $L$-bounded geometry where $L > 1$ is some
constant depending only on $M$.

Since the degree of the map $g_{\theta}: \widetilde{U} \to
\widetilde{A}$ is either one,  two, or three, it follows that
$${\rm mod}(\widetilde{U}\setminus \overline{U}) \ge \frac{1}{3}
\cdot  {\rm mod}(\widetilde{A} \setminus \overline{A})$$ can be
arbitrarily large provided that $n$ is large enough. Note that $0$
and $\infty$ are essential singularities of $g_{\theta}$. Thus $\{0,
\infty\} \cap \widetilde{U} = \emptyset$. This means that the
annulus $\widetilde{U}\setminus \overline{U}$ separates $U$ and
$\{0, \infty\}$. Since ${\rm mod}(\widetilde{U}\setminus
\overline{U})$ can be arbitrarily large provided that $n$ is large
enough,  we have

$\bold{Fact}$: The ratio   $$\frac{{\rm diam}(U) }{{\rm dist}(U,
0)}$$ can be arbitrarily small provided that $n$ is large enough.

Now  we have  two  cases.  (1)  ${\rm dist}(U, 0) \ge 2$ and (2)
${\rm dist}(U, 0) < 2$.

 In the first case, we have
$${\rm dist}(U, \Bbb T) \ge  \frac{1}{2} \cdot {\rm dist}(U, 0).$$
From the above $\bold{Fact}$,  it follows that $${\rm diam}(U) <
{\rm dist}(U, \Bbb T)
$$ provided that $n$ is large enough.

In the second case,   we have ${\rm dist}(U, 0) < 2$. From the above
$\bold{Fact}$, it follows that ${\rm diam}(U)$ can be arbitrarily
small provided that $n$ is large enough. Since $\zeta \in U$ and
${\rm dist}(\zeta, \Bbb T) \ge \wp$, we have ${\rm dist}(U, \Bbb T)
> \wp/2$ provided that $n$ is large enough.   This implies that
$$
{\rm diam}(U) < {\rm dist}(U, \Bbb T)
$$  provided that $n$ is large enough.

So in both the cases, we have ${\rm diam}(U) < {\rm dist}(U, \Bbb
T)$. In particular, $U \cap \Bbb T = \emptyset$.  Since $\zeta \in
X_{n+2} \subset \Bbb C \setminus \overline{\Delta}$ and $\zeta \in
U$, we have $U \subset \Bbb C \setminus \overline{\Delta}$. Since $V
\subset U$, we have $V \subset \Bbb C \setminus \overline{\Delta}$.
Since $V$ is the pull back of $B$ by $g_{\theta}$  and  $B \subset
Y_{n} \setminus Y_{n+2}$, we have $V \subset X_n \setminus X_{n+2}$.
Since $(U, V)$ has $L$-bounded geometry  and ${\rm diam}(U) < {\rm
dist}(U, \Bbb T)$, it follows that the pair $(U,V)$ is an
$L$-admissible pair to which $\zeta$ is associated. The proof of
Lemma~\ref{cdr} is completed.
\end{proof}

For an interval $I \subset \Bbb T$, let ${\rm int}(I)$ denote the
interior of $I$.
\begin{lem}\label{crt} Let $n \ge 0$. Then we have
\begin{itemize}
 \item[1.]  for $-1 \le i \le   q_{n+1}-2$,
$v\notin {\rm int}(I_{n}^i)$ and \item[2.]  for $-1 \le i \le
q_{n}-2$, $v \notin {\rm int}(I_{n+1}^i)$.
\end{itemize}
\end{lem}
\begin{proof}   Note that the collection of the intervals
\begin{equation}\label{refl}
I_n^i, -1 \le i \le q_{n+1}-2, \hbox{  and  } I_{n+1}^i, -1 \le i
\le q_{n} -2, \end{equation} is the $g_{\theta}$-images of the
intervals of the dynamical partition  of level $n$ (see $\S4$ for
the definition of the dynamical partition). Thus all these intervals
have disjoint interiors. Lemma~\ref{crt} then follows since  the
critical point $1$ is the common boundary point of $I_{n}^0$ and
$I_{n+1}^0$, and thus $v = g_{\theta}(1)$ is the common boundary
point of $I_n^{-1}$ and $I_{n+1}^{-1}$.
\end{proof}

Let $I$ be one of the intervals in the collection (\ref{refl}). Let
$J \subset \Bbb T$ be the interval such that $g_{\theta}(J) = I$.
Then $J$ is an interval in the dynamical partition of level $n$. So
the interior of $J$ does not contain the critical point $1$ of
$g_{\theta}$. Let $\Psi$ be the branch of the inverse of
$g_{\theta}$ which maps $I$ to $J$. For $d
> 0$ recall that $\Omega_d(I)$ denotes the hyperbolic neighborhood
of $I$ in $\Omega_I$ (cf (\ref{dhs}) for the definition of
$\Omega_{d}(I)$). By Lemma~\ref{hn-ps}, $\Omega_{d}(I)$ is a Jordan
domain for all $n$ large enough. By Lemma~\ref{rotation number}, $v$
is the unique critical value of $g_{\theta}$ in $\Bbb C^*$. By
Lemma~\ref{crt} $v \notin int(I)$ and thus $v \notin \Omega_{d}(I)$.
It follows that $\Omega_{d}(I)$ contains no critical value of
$g_{\theta}$.  Since $g_{\theta}$ has no asymptotic values and
$\Omega_d(I)$ is a Jordan domain,  $\Psi$ can be holomorphically
extended to $\Omega_{d}(I)$ for all $n$ large enough.  Recall that
$H_{\alpha}(I)$, where  $d = \log \cot \frac{\alpha}{4}$,  is the
part of $\Omega_{d}(I)$ which belongs to the outside of $\Delta$
(cf.  (\ref{hndd}) for the definition of $H_{\alpha}(I)$).

\begin{lem}\label{sch} There exist an $N_1$ such that for all $n \ge
N_1$ we have $$\Psi(H_{\alpha}(I)) \subset H_{\alpha}(J).$$
\end{lem}
\begin{proof}
Since $v \notin \Omega_I$ and $v$ is the unique critical value of
$g_{\theta}$ in $\Bbb C^*$ ($\supset \Omega_I$), $\Omega_{I}$
contains no critical values of $g_\theta$.  By Lemma~\ref{rotation
number} $g_{\theta}$ has no asymptotic values. Thus $\Psi$ can be
holomorphically continued along any path in $\Omega_{I}$. It is
clear that $\Psi$ takes values in $\Omega_J$. But since $\Omega_{I}$
is not simply connected (because it is punctured at $0$ and
$\infty$), the map
$$\Psi: \Omega_{I} \to \Omega_{J}$$ obtained in
this way may be a multi-valued holomorphic function.  To avoid this
problem, let us consider the holomorphic universal covering map
$\pi: \Delta \to \Omega_{I}$.    Note that  $g_\theta$ has no
asymptotic values and   critical values in $\Omega_I$.  Thus
 $\Psi \circ \pi$ can be holomorphically extended along any path
 in $\Delta$. Since $\Delta$ is simply connected, $\Psi \circ \pi$
 can be
 holomorphically extended to a map
 $\widetilde{\Psi}: \Delta \to \Omega_{J}$.  Since $\Psi$ is locally
 homeomorphic,
 $\widetilde{\Psi}$ is locally homeomorphic.

 Since
 $\Omega_d(I)$ is simply connected and $\pi: \Delta \to \Omega_I$ is a universal holomorphic covering map, there is a simply
 connected domain $U \subset \Delta$ such that $\pi: U \to \Omega_d(I)$ is a holomorphic isomorphism. Let $\Gamma \subset U$ be the curve segment
 such that
 $\pi(\Gamma) = I$ (By symmetry, one can show that $I$ is a geodesic in $\Omega_I$ and thus $\Gamma$ is a geodesic in $\Delta$). Since $\pi$ is a local isometry, it follows that
 $$
 U = \{z\:|\: d_{\Delta}(z, \Gamma) < d\}.
 $$

Since $\widetilde{\Psi}: \Delta \to \Omega_{J}$ is holomorphic and
$\widetilde{\Psi}(\Gamma) = \Psi \circ \pi(\Gamma) = J$, by Schwarz
Contraction Principle, it follows that
$$
\widetilde{\Psi}(U) \subset \Omega_d(J).
$$
Since $\widetilde{\Psi}(U) = \Psi \circ \pi (U) =
\Psi(\Omega_d(I))$, it follows that
$$\Psi(\Omega_d(I)) \subset \Omega_d(J).$$   Note that
$H_{\alpha}(I)$ and $H_{\alpha}(J)$ are respectively the parts of
$\Omega_{\alpha}(I)$ and $\Omega_{\alpha}(J)$ which are contained in
the outside of the unit disk (see (\ref{hndd})). Since $\Psi$ is a
univalent function on $\Omega_d(I)$ and maps $I$ to $J$ with the
orientation preserved, $\Psi$ maps $H_{\alpha}(I)$ into
$H_{\alpha}(J)$. That is,
$$\Psi(H_{\alpha}(I)) \subset H_{\alpha}(J).$$
Lemma~\ref{sch} follows.
\end{proof}

\begin{lem} \label{cm}
Let $1< L < \infty$.   Then there is a $1 < K < \infty$ depending
only on $L$ such that   for any  $z \in X_{n+2}$ and $m \ge 1$, if
 $g_{\theta}^i(z) \in \Bbb C \setminus \overline{\Delta}$ for all $1 \le i \le m$, and   $\zeta =
{g}_{\theta}^{m}(z)$ is associated to some $L$-admissible pair $(U',
V')$, then $z$ is associated to some $K$-admissible pair.
\end{lem}
\begin{proof}
 Assume that $\zeta$ is
associated to some $L$-admissible pair $(U', V')$ for some $1 < L <
\infty$.  By the third condition of Definition~\ref{admiss},  we
have ${\rm diam}(U') < L \cdot {\rm dist}(U', \Bbb T)$.  This
implies that there is a Jordan domain $U''$ with the following
properties:
\begin{itemize} \item[1.] $\overline{U''} \subset \Bbb C \setminus
\overline{\Delta}$,
\item[2.] $\overline{U'} \subset U''$,
\item[3.]
${\rm mod}(U'' \setminus \overline{U'}) \ge M(L)$ where $M(L) > 0$
is some constant depending only on $L$.  \end{itemize}
 Since $v \in \Bbb T$ is the only critical value of
$g_{\theta}$ in $\Bbb C^{*}$ (the other two critical values are $0$
and $\infty$, see Lemma~\ref{rotation number}), the post-critical
set of $g_{\theta}$ is equal to $\Bbb T$. Since $\overline{U''}
\subset \Bbb C \setminus \overline{\Delta}$, $U''$ does not
intersect the post-critical set of $g_{\theta}$. Since $g_{\theta}$
has no asymptotic values, the inverse branch of $g_{\theta}^{m}$,
say $\Psi$, which maps $\zeta$ to $z$, can be holomorphically
extended to a univalent function on $U''$.  Let $$W = \Psi(U''), U =
\Psi(U') \hbox{   and } V = \Psi(V').$$  Then both $U'$ and $V'$ are
Jordan domains. Since $V' \subset X_{n} \setminus X_{n+2}$ by the
first condition of Definition~\ref{admiss}, we have $V \subset X_{n}
\setminus X_{n+2}$ by Remark~\ref{rk1}. Since ${\rm mod}(U''
\setminus \overline{U'}) \ge M(L)>0$ and $(U', V')$ has $L$-bounded
geometry, by Koebe's distortion theorem, $(U, V)$ has $K_1$-bounded
geometry with $1< K_1 < \infty$ depending only on $L$. Since ${\rm
mod}(W \setminus \overline{U}) = {\rm mod}(U'' \setminus
\overline{U'}) \ge M(L)$ and the annulus $W\setminus \overline{U}$
separates $U$ from $\Bbb T$, we have ${\rm diam}(U) < K_2 \cdot {\rm
dist}(U, \Bbb T)$ where $1 < K_2 < \infty$ is a constant depending
only on $L$. Let $K = \max\{K_1, K_2\}$. Then $z$ is associated to
the $K$-admissible pair $(U, V)$. Lemma~\ref{cm} has been proved.
\end{proof}

%%%%%%%%%%%%%%%%%%%%%%%%%%%%%%%%%%%%%%%%%%%%%%%%%%%%%%%%%%%%%%%%%%%
%%%%%%%%%%%%%%%%%%%%%%%%%%%%%%%%%%%%%%%%%%%%%%%%%%%%%%%%%%%%%%%%%%%

$\bold{Proof \:\: of\: \:Lemma~\ref{pre-lem}}$. In the following $n$
is assumed to be large enough.

Let $z \in X_{n+2}$. Suppose $z \notin Z_n$. It suffices to prove
that there is a $1 < K <\infty$ independent of $n$ and $z$ such that
$z$ is associated to some $K$-admissible pair $(U, V)$.

First recall that $k_z \ge 1$ is the least positive integer such
that $g_{\theta}^{k_z}(z) \in \Delta$. Since $z \in X_{n+2}$, we
have $$g_{\theta}^{k_z}(z) \in Y_{n+2}.$$  Let us denote
$$
z_l = g_{\theta}^{l}(z),\:\: 0 \le l \le k_z.
$$
Since $z_{0} = z \notin Z_n$, the set
$$
\Pi = \{k \in \Bbb Z\:|\: 0\le k < k_{z} \hbox{  and  }
g_{\theta}^{k}(z) \notin Z_n\}$$ is not empty. It is clear that
$\Pi$ contains at most $k_z$ elements and  is thus  a finite set.
Let
$$
k_0 = \max _{k\in \Pi}\{k\}.
$$  Then $0 \le k_0 \le k_z - 1$.
Set  $$\zeta = z_{k_0} \hbox{   and   }\omega = g_{\theta}(\zeta) =
z_{k_0+1}.$$ Then $\zeta \notin Z_n$, and  moreover,
\begin{equation}\label{two-d}\omega \in Z_n  \hbox{ if } k_0 < k_z -1 \hbox{ and }\omega \in Y_{n+2}
\hbox{  if  }k_0 = k_z -1.\end{equation}

By Lemma~\ref{cm} it suffices to prove that $\zeta$ is associated to
some $L$-admissible pair $(U', V')$ for some uniform $1 < L <
\infty$. The proof is divided into two cases.

\vspace{0.3cm}
 Case I: ${\rm dist}(\zeta, \Bbb T) \ge \wp$.
 \vspace{0.3cm}

By Lemma~\ref{cdr},  it suffices to prove that there exist a uniform
$1 < M < \infty$, a Jordan domain $A$  and  a Eucldiean disk $B$
such that
\begin{itemize}
\item[(1)]  $\omega \in A$ and $v \notin A$,
\item[(2)] $B \subset (Y_{n} \setminus Y_{n+2})$,
\item[(3)] $B \subset A$ and ${\rm diam}(A) \le M \cdot  {\rm
diam}(B)$.
\end{itemize}
There are two subcases.

In the first subcase, $\omega \in Z_n$. Then from  (\ref{ZN}) there
is an interval $I$ in the dynamical partition of level $n$ such that
$\omega \in H_{\alpha}(I)$. By Lemma~\ref{dyn-cell},  $I$ is
contained in an interval $J$  of the cell partition of level $n$,
and moreover, $J$ is either equal to $I$, or is the union of $I$ and
one of its adjacent intervals of the dynamical partition. By
Theorem~\ref{SH} we  have
$$|J|\asymp |I|.$$ Let $E$ be the Yoccoz's cell of level $n$
attached to $J$.  Then ${\rm diam}(E) \asymp |J|$ by Lemma~\ref{good
geometry}.

In the second subcase, $\omega \in Y_{n+2}$. Then there is a Yoccoz
cell of level $n$ containing $\omega$. Let us still use $E$ to
denote the cell.

In both the two subcases, by Lemma~\ref{good geometry}, we have a
Eucldiean disk $B \subset E \setminus Y_{n+2}$ such that
$${\rm diam}(B)  \asymp {\rm diam}(E).$$ It is easy to see that in
the first subcase,   one can construct a Jordan domain $A$
containing $\omega$ such that $v \notin A$ and
$$
B \subset A \subset E \cup H_{\alpha}(I);
$$
and in the second subcase,  one can  construct a Jordan domain $A$
containing $\omega$  such that $v \notin A$ and
$$
B \subset A \subset E.
$$
In the first subcase, since ${\rm diam}(H_{\alpha}(I)) \asymp |I|$ (
cf. Lemma~\ref{hn-ps}) and  $|I| \asymp |J| \asymp {\rm diam}(E)$,
we have ${\rm diam}(A) \le {\rm diam}(E \cup H_{\alpha}(I)£© \preceq
|J| \asymp{\rm diam}(B)$. In the second subcase, ${\rm diam}(A) \le
{\rm diam}(E) \preceq {\rm diam}(B)$.  So in both the two subcases,
${\rm diam}(A) \le M \cdot {\rm diam}(B)$ with $1 < M < \infty$
being some uniform constant.  This implies that the conditions
(1)-(3) hold in both the two subcases.   So  Lemma~\ref{pre-lem} in
Case I follows from Lemma~\ref{cdr}.

\vspace{0.3 cm}  Case II. ${\rm dist}(\zeta, \Bbb T) <  \wp$.
\vspace{0.3cm}

By (\ref{two-d}) we have  $\omega = g_{\theta}(\zeta) \in Z_n$ or
$\omega \in Y_{n+2}$.   If $\omega \in Z_n$,  from  (\ref{ZN}) there
is an interval $I$ in the dynamical partition of level $n$ such that
$\omega \in H_{\alpha}(I)$.  If  $\omega \in Z_n$, we have  two
subcases: Subcase I: $v \in I$  and Subcase II:  $v \notin I$.  The
situation that $\omega \in Y_{n+2}$ is considered in Subcase III.

Subcase I. $\omega \in H_{\alpha}(I_n^{q_{n+1}-1})$.  Note that $v
\in I_{n}^{q_{n+1}-1}$.  Since $1$ is a double critical point of
$g_{\theta}$ and ${\rm dist}(\omega, v) \preceq {\rm
diam}(H_{\alpha}(I_n^{q_{n+1}-1})) \asymp |I_n^{q_{n+1}-1}|$, there
are two distinct points $\zeta'$ and $\zeta''$ outside the unit disk
such that
\begin{itemize}
\item[i.] ${\rm dist}(\zeta', 1) \asymp {\rm dist}(\zeta'',
1)\preceq|I_n^{q_{n+1}-1}|^{1/3}$,
\item [ii.] $g_{\theta}(\zeta') = g_{\theta}(\zeta'') = \omega$.
\end{itemize}
 From (i) we have $1< |\zeta'|, |\zeta''| < 1 + \wp$ for all $n$ large enough.  Since ${\rm dist}(\zeta, \Bbb T) < \wp$, by Lemma~\ref{cone}  we have either  $\zeta= \zeta'$  or  $\zeta =
 \zeta''$.
 Let $\Psi_{+}$ and $\Psi_{-}$ be the two branches of the inverse
of $g_{\theta}$ defined right after Lemma~\ref{cone}. Without loss
of generality, we may assume that
$$
\Psi_{+}([x_{q_{n+1}-1}, v]) = [x_{q_{n+1}}, 1]  \hbox{  and  }
\Psi_{-}([v, x_{q_n + q_{n+1}-1}]) = [1, x_{q_n + q_{n+1}}].
$$
See Figures 4 and 5 for an illustration. Then  we have either $
\zeta=\Psi_{+}(\omega)$ or $\zeta = \Psi_{-}(\omega)$.

\vspace{0.3cm}
\begin{figure}
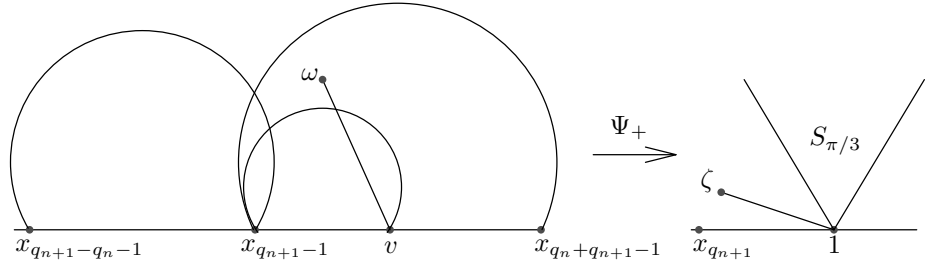

\bigskip
\begin{center}
\centertexdraw { \drawdim cm \linewd 0.02 \move(-2 2)

\move(-10 -2)    \lvec(-3 -2) \move(-9.8 -2) \fcir f:0.3 r:0.05
\move(-10 -2.4) \htext{$x_{q_{n+1}-q_{n} -1}$}

\move(-6.8 -2) \fcir f:0.3 r:0.05 \move(-7 -2.4) \htext{$x_{q_{n+1}
-1}$}

\move(-5 -2) \fcir f:0.3 r:0.05   \lvec(-5.9 0)\move(-5.1 -2.3)
\htext{$v$}

\move(-3 -2) \fcir f:0.3 r:0.05 \move(-3.1 -2.4) \htext{$x_{q_n +
q_{n+1} -1}$}

\move(-1 -2)    \lvec(2 -2)

\move(-0.9 -2) \fcir f:0.3 r:0.05 \move(-1 -2.4)
\htext{$x_{q_{n+1}}$}

\move(0.9 -2) \fcir f:0.3 r:0.05  \lvec(-0.6 -1.5)\move(0.8 -2.3)
\htext{$1$}   \move(0.9 -2) \lvec(-0.3 0) \move(0.9 -2) \lvec(2.1 0)

\move(-4.9 -1.1) \larc r: 2.12 sd:335 ed:205

\move(-8.3 -1.1) \larc r: 1.75 sd:330 ed:210

\move(-5.9 -1.43) \larc r: 1.05 sd:325 ed:215

\move(-5.9 0)  \fcir f:0.3 r:0.05  \move(-6.2 0)  \htext{$\omega$}

\move(-0.6 -1.5)  \fcir f:0.3 r:0.05  \move(-0.9 -1.5)
\htext{$\zeta$}

\move(0.55 -1) \htext{$S_{\pi/3}$}

\move(-2.3  -1) \arrowheadtype t:V \avec(-1.2 -1)

\move(-2.1 -0.8) \htext{$\Psi_{+}$}

 }

\end{center}
\vspace{0.2cm} \caption{The map $\Psi_{+}$}
\end{figure}
First let us suppose $ \zeta= \Psi_{+}(\omega) $.  Note that
$$[x_{q_{n+1}-1}, x_{q_{n} +q_{n+1}-1}] =  I_{n}^{q_{n+1}-1} \hbox{
and  } [x_{q_{n+1} -q_{n} -1}, x_{q_{n+1}-1}] =
I_{n}^{q_{n+1}-q_{n}-1} $$  are two adjacent intervals in the
dynamical partition of level $n$ and are thus commensurable. By
Lemma~\ref{sch} we have
$$
\Psi_{+}(H_{\alpha}(I_{n}^{q_{n+1}-q_{n}-1})) \subset
H_{\alpha}(I_{n}^{q_{n+1}-q_n}) \subset Z_n
$$
and
$$
\Psi_{+}(H_{\alpha}([x_{q_{n+1}-1}, v])) \subset H_{\alpha}(I_{n+1})
\subset Z_n,
$$
These, together with $\zeta \notin Z_n$,  implies that
\begin{equation}\label{gr}
\omega \notin H_{\alpha}(I_{n}^{q_{n+1}-q_{n}-1}) \cup
H_{\alpha}([x_{q_{n+1}-1}, v]).\end{equation}    From (\ref{gr}) and
the geometry of $H_{\alpha}(I_{n}^{q_{n+1}-q_{n}-1})$,
$H_{\alpha}([x_{q_{n+1}-1}, v])$ and $H_{\alpha}(I_{n}^{q_{n+1}-1})$
(cf.  Lemma~\ref{hn-ps} and Figure 4), there is a $0< \beta < \pi$
such that for all $n$ large enough, the angle between  $[v, \omega]$
and $[v, x_{q_{n+1}-1}])$ is greater than $\beta$. Since $\Psi_{+}$
is like a branch of the cubic root map, the angle between $[1,
\zeta]$ and $[1, x_{q_{n+1}}]$ is approximately greater  than
$\beta/3$ and thus strictly greater than $\beta / 4$.

Next let us  suppose $\Psi_{-}(\omega) = \zeta$. By Lemma~\ref{sch}
we have
$$
\Psi_{-}(H_{\alpha}[v, x_{q_{n}-1}])) \subset H_{\alpha}(I_n)
\subset Z_n.
$$
This, together with $\zeta \notin Z_n$,  implies that $\omega \notin
H_{\alpha}([v, x_{q_{n}-1}])$.  Note that   $$|[v, x_{q_{n}-1}]| >
|[v, x_{q_{n}+q_{n+1}-1}]|\succeq |I_{n}^{q_{n+1}-1}|.$$   The first
inequality comes from $[v, x_{q_{n}+q_{n+1}-1}] \subset [v,
x_{q_{n}-1}]$ and  the second approximate inequality is guaranteed
by the first assertion of Lemma~\ref{real bound}.   Since  $\omega
\in H_{\alpha}(I_n^{q_{n+1}-1})$  and $\omega \notin H_{\alpha}([v,
x_{q_{n}-1}])$,   by the geometry of $H_{\alpha}(I_{n}^{q_{n+1}-1})$
and $H_{\alpha}([v, x_{q_{n}-1}])$ (cf.  Lemma~\ref{hn-ps} and
Figure 5), there is a $0 < \gamma < \pi$ such that for all $n$ large
enough, the angle between $[v, \omega]$ and $[v, x_{q_{n} -1}]$ is
greater than $0 < \gamma< \pi$.  Since $\Psi_{-}$ is like a branch
of the cubic root map, the angle between $[1, \zeta]$ and $[1,
x_{q_{n}}]$ is approximately greater than $\gamma/3$ and thus
strictly greater than $\gamma/4$.

 \vspace{0.3cm}
\begin{figure}
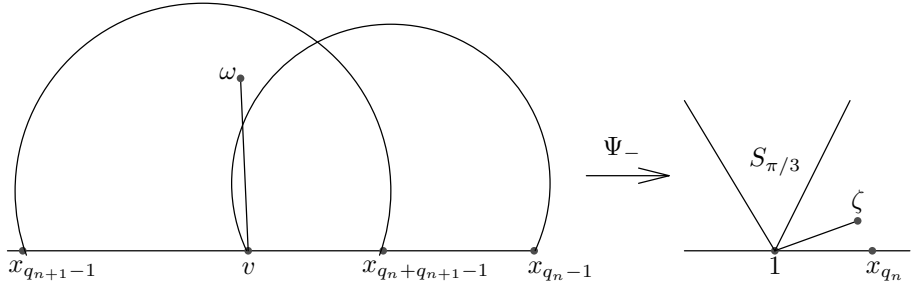

\bigskip
\begin{center}
\centertexdraw { \drawdim cm \linewd 0.02 \move(-2 2)

\move(-10 -2)    \lvec(-3 -2)

\move(-9.8 -2) \fcir f:0.3 r:0.05 \move(-10 -2.4)
\htext{$x_{q_{n+1}-1}$}

\move(-6.8 -2) \fcir f:0.3 r:0.05 \lvec(-6.9 0.3)   \move(-6.9 -2.3)
\htext{$v$}

\move(-5 -2) \fcir f:0.3 r:0.05   \move(-5.3 -2.4) \htext{$x_{q_n +
q_{n+1} -1}$}

\move(-3 -2) \fcir f:0.3 r:0.05 \move(-3.1 -2.4) \htext{$x_{q_n
-1}$}

\move(-1 -2)    \lvec(2 -2)

\move(1.5 -2) \fcir f:0.3 r:0.05 \move(1.4 -2.4) \htext{$x_{q_{n}}$}

\move(0.2 -2) \fcir f:0.3 r:0.05  \lvec(1.3 -1.6)\move(0.1 -2.3)
\htext{$1$}   \move(0.2 -2) \lvec(-1 0) \move(0.2 -2) \lvec(1.2 0)

\move(-4.9 -1.1) \larc r: 2.12 sd:335 ed:205

\move(-7.4 -1.2) \larc r: 2.5 sd:340 ed:200

\move(-6.9 0.3)  \fcir f:0.3 r:0.05  \move(-7.2 0.3)
\htext{$\omega$}

\move(1.3 -1.6)  \fcir f:0.3 r:0.05  \move(1.2 -1.5) \htext{$\zeta$}

\move(-0.15 -1) \htext{$S_{\pi/3}$}

\move(-2.3  -1) \arrowheadtype t:V \avec(-1.2 -1)

\move(-2.1 -0.8) \htext{$\Psi_{-}$}

 }

\end{center}
\vspace{0.2cm} \caption{The map $\Psi_{-}$}
\end{figure}
Let $\delta = \min\{\beta/4, \gamma/4\}$.  Then from the above
arguments,  $\zeta$ always  belongs to a cone spanned at $1$ and
bounded by two rays which form an angle $\delta > 0$ with $\Bbb T$.
Let us denote this cone by $S_{\delta}$. Since $\delta < \pi/3$, we
have  $S_{\pi/3} \subset S_{\delta}$. See Figure 6 for an
illustration.

Note that  $H_{\alpha}(I_n) \cup H_{\alpha}(I_{n+1})$ ($\subset
Z_n$)
 contains the part,
 which is outside the unit disk, of a  disk centered at $1$
 and with radius $\asymp |I_{n+1}|$.  Since $\zeta \notin Z_n$, we have
   ${\rm dist}(1, \zeta) \succeq
 |I_{n+1}|$. Thus
\begin{equation}\label{nngg}{\rm dist}(\omega,
 v) \asymp {\rm dist}(1, \zeta)^{3}\succeq |I_{n+1}|^{3} \asymp |[v,
 x_{q_{n+1}-1}]| \asymp |I_n^{q_{n+1}-1}|.\end{equation}  The last $\asymp$ comes from the
 first  assertion of Lemma~\ref{real bound}.   On the other hand,
 we have ${\rm dist}(\omega, v) \le
 {\rm diam}(H_{\alpha}(I_n^{q_{n+1}-1})) \asymp |I_n^{q_{n+1}-1}|$.
This, together with (\ref{nngg}), implies
\begin{equation}\label{pds}
{\rm dist}(\omega, v)\asymp |I_n^{q_{n+1}-1}|.
\end{equation}

Now let $I = I_{n}^{q_{n+1}-1} \cup I_{n+1}^{q_{n}-1}$. Then $I$ is
the interval in the cell partition of level $n$ which contains
$I_n^{q_{n+1}-1}$ (cf. Lemma~\ref{dyn-cell}). Since
$|I_{n}^{q_{n+1}-1} |\asymp|I_{n+1}^{q_{n}-1}|$, we have $|I| \asymp
|I_{n}^{q_{n+1}-1}|$.  Let $E$ be the cell of level $n$ which is
attached to $I$. By Lemma~\ref{good geometry}, we have ${\rm
diam}(E) \asymp |I|$, and moreover, there is a Euclidean disk $B$
contained in $E \setminus Y_{n+2}$ such that
\begin{equation}\label{ce}
{\rm dist}(B, \Bbb T) \asymp {\rm diam}(B)\asymp |I|\asymp
|I_{n}^{q_{n+1}-1}|.\end{equation}   Since $v \in I_{n}^{q_{n+1} -1}
\subset I$,  we have ${\rm dist}( B, v) \le {\rm diam}(E) \preceq
|I|$.  Since  ${\rm dist}(B, v) \ge {\rm dist}(B, \Bbb T)\asymp |I|$
and ${\rm diam}(B) \asymp |I|$ by (\ref{ce}), we have
\begin{equation} \label{de}{\rm dist}(B, v) \asymp |I| \asymp{\rm
diam}(B).\end{equation}  Let $\Psi$ be the branch of the inverse of
$g_{\theta}$ which maps $E$ into $S_{\delta}$ (cf. Lemma~\ref{obv}).
Since $1$ is a double critical point of $g_{\theta}$ and $B$  is a
Euclidean disk, from (\ref{de}) and  Koebe's distortion theorem, it
follows that there is a Euclidean disk $V \subset \Psi(B) \subset
S_{\delta}$ such that
\begin{equation}\label{red}
 {\rm dist}(V, 1) \asymp {\rm diam}(V) \asymp {\rm diam}(B)^{1/3}
\asymp |I_{n}^{q_{n+1}-1}|^{1/3} \asymp |I_{n}|.
\end{equation}

From (\ref{pds}) we have  \begin{equation}\label{aha} {\rm
dist}(\zeta, 1) \asymp |I_{n}^{q_{n+1}-1}|^{1/3}\asymp
|I_n|.\end{equation}  Since $V \subset S_{\delta}$  and  $\zeta \in
S_{\delta}$ (see Figure 6 for an illustration),  from (\ref{red})
and (\ref{aha}) there is a Jordan domain $U$ contained in
$S_{\delta}$ such that
\begin{itemize}
\item[1.]  $\zeta \in U$, \item[2.]  $V \subset U$,
\item[3.] ${\rm diam}(U) \asymp {\rm dist}(U, \Bbb T)\asymp {\rm diam}(V)$. \end{itemize}
Since $B \subset E \setminus Y_{n+2} \subset Y_{n} \setminus
Y_{n+2}$, we have $V \subset \Psi(B) \subset X_n \setminus X_{n+2}$.
Thus $(U, V)$ is a desired pair of Jordan domains. This proves
Lemma~\ref{pre-lem} in Subcase I of Case II.

\vspace{0.3cm}
\begin{figure}
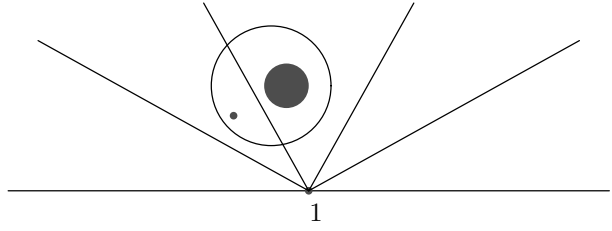

\bigskip
\begin{center}
\centertexdraw { \drawdim cm \linewd 0.02 \move(-2 1)

\move(-4 -2)    \lvec(4 -2)

\move(0 -2)  \fcir f:0.3 r:0.05  \lvec(-1.4 0.5)\move(0 -2.4)
\htext{$1$} \move(0 -2) \lvec(1.4 0.5) \move(0 -2) \lvec(-3.6 0)
\move(0 -2) \lvec(3.6 0)

\move(-0.3 -0.6) \fcir f:0.3 r:0.3 \move(-0.5 -0.6) \lcir r:0.8
\move(-1 -1) \fcir f:0.3 r:0.05 }
\end{center}
\vspace{0.2cm} \caption{The pair $(U, V)$ associated to $\zeta$}
\end{figure}
 Subcase II. $\omega \in H_{\alpha}(J)$  where $J = I_n^i$ for some  $0
\le i < q_{n+1} -1$ or $J = I_{n+1}^{i}$ for some $0 \le i \le q_{n}
-1$.  Then $v \notin J$.  Let $J' \subset \Bbb T$ such that
$g_{\theta}(J') = J$.  Then $1 \notin J'$. Let $\Psi$ denote the
branch of the inverse of $g_{\theta}$ which maps $J$ to $J'$.  By
Lemma~\ref{sch} we have
\begin{equation}\label{spz}
\Psi(H_{\alpha}(J)) \subset H_{\alpha}(J').
\end{equation}
Let us now prove  \begin{equation}\label{lab} H_{\alpha}(J') \subset
Z_n.\end{equation} In the case that  $J = I_n^{i}$ for $0 \le i <
q_{n+1} -1$ or $J = I_{n+1}^{i}$ for $0 \le i < q_{n}-1$,  $J' =
I_n^{i+1}$ or $J' = I_{n+1}^{i+1}$ is still an interval in the
dynamical partition of level $n$, and thus (\ref{lab}) holds by the
definition of $Z_n$ (cf. (\ref{ZN})). In the case that $J =
I_{n+1}^{q_{n}-1}$, $J' = I_{n+1}^{q_{n}} = [x_{q_{n}},
x_{q_{n}+q_{n+1}}] \subset [1, x_{q_{n}}] = I_n$. Thus
$H_{\alpha}(J') \subset H_{\alpha}(I_n) \subset Z_n$ and (\ref{lab})
also holds. This proves (\ref{lab}).

Let $\zeta' = \Psi(\omega)$. Since $\omega \in H_{\alpha}(J)$, from
(\ref{spz}) we have $$\zeta' \in H_{\alpha}(J').$$ See Figure 7 for
an illustration.   From (\ref{lab}) we have $\zeta' \in Z_n$. Thus
$\zeta \ne \zeta'$.   Since $g_{\theta}(\zeta) = g_{\theta}(\zeta')
= \omega$ and $1 < |\zeta|, |\zeta'| < 1 + \wp$, by Lemma~\ref{cone}
both $\zeta$ and $\zeta'$ belong to $B_{6\wp}(1)$.

 Now we claim
\begin{equation}\label{pcm}
{\rm dist}(v,  \omega) \succeq |J|.
\end{equation} Let us prove the claim.  Since $\omega \in H_{\alpha}(J)$, we need only to
prove that  \begin{equation}\label{can} {\rm dist}(v, H_{\alpha}(J))
\succeq |J|.\end{equation} By the geometry of $H_{\alpha}(J)$ (cf.
Lemma~\ref{hn-ps}), we have ${\rm dist}(v, H_{\alpha}(J)) \asymp
{\rm dist}(v, J)$. Since $v \notin J$, by the second  assertion of
Lemma~\ref{real bound}, we have ${\rm dist}(v, J) \succeq |J|$. This
proves (\ref{can}) and (\ref{pcm}) then follows.

Now let $I$ be the interval in the cell partition of level $n$ which
contains $J$. By Lemma~\ref{dyn-cell}, $I$ either is $J$ itself or
is the union of $J$ and one of its adjacent intervals in the
dynamical partition.  The situation illustrated in Figure 7 is the
case that $I = J$.  In both the cases, by Theorem~\ref{SH} we have
$$|I| \asymp |J|.$$ Let $E$ be the cell of level $n$ which is
attached to $I$. By Lemma~\ref{good geometry}, there is a Euclidean
disk $B \subset E \setminus Y_{n+2}$ such that
$$
{\rm diam}(B) \asymp {\rm dist}(\Bbb T, B) \asymp |I|.
$$
In particular,  we have \begin{equation}\label{prs} {\rm dist}(v, B)
\succeq {\rm dist}(\Bbb T, B) \asymp |I|. \end{equation}   We thus
have
\begin{equation}\label{nnss}
{\rm dist}(v, B) \succeq {\rm diam}(B) \asymp |I|\asymp|J|.
\end{equation}
Note that  ${\rm diam}(H_{\alpha}(J)) \asymp |J|$ and ${\rm diam}(E)
\asymp |I|$.  This, together  with (\ref{pcm}) and (\ref{nnss}),
implies that there exists a Jordan domain $A\subset H_{\alpha}(J)
\cup E$ such that
\begin{itemize}
\item[1.] $\omega \in A$,
\item[2.] $B \subset A$,
\item[3.] ${\rm dist}(v, A) \succeq {\rm diam}(A)\asymp {\rm diam}(B)$.
\end{itemize}
\begin{figure}
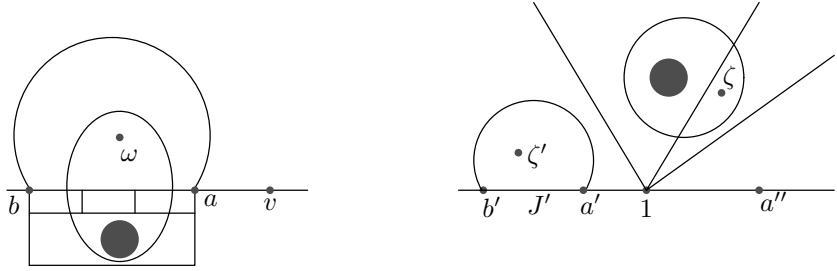

\bigskip
\begin{center}
\centertexdraw { \drawdim cm \linewd 0.02 \move(-2 2)

\move(-10 -1.5)    \lvec(-6 -1.5)  \move(-6.5 -1.5)  \fcir f:0.3
r:0.05  \move(-6.6 -1.75)  \htext{$v$}

\move(-8.6 -0.77) \larc r: 1.3 sd:325 ed:215

\move(-9.7 -2.5)    \lvec(-7.5 -2.5) \lvec(-7.5 -1.5) \move(-9.7
-2.5) \lvec(-9.7 -1.5)  \move(-9.7 -1.8) \lvec(-7.5 -1.8)

\move(-9 -1.8) \lvec(-9 -1.5) \move(-8.3 -1.8) \lvec(-8.3 -1.5)

\move(-8.5 -2.15) \fcir f:0.3 r:0.25

\move(-7.5 -1.5)  \fcir f:0.3 r:0.05    \move(-7.4 -1.7)\htext{$a$}

\move(-10 -1.8) \htext{$b$}

 \move(-9.7 -1.5) \fcir f:0.3 r:0.05

 \move(-3.69 -1.85) \htext{$b'$}

 \move(-3.67 -1.5) \fcir f:0.3 r:0.05

\move(-8.5 -1.45) \lellip rx:0.7 ry:1

\move(-4 -1.5)    \lvec(1 -1.5)

\move(-8.5 -0.8) \fcir f:0.3 r:0.05  \move(-8.5 -1.1)
\htext{$\omega$}

\move(-3 -1.1) \larc r: 0.8 sd:330 ed:210

\move(-1.5 -1.5)  \fcir f:0.3 r:0.05

\lvec (-3  1)  \move(0  1) \lvec(-1.5 -1.5)

\lvec(1 0.3)  \move(-1.6 -1.85) \htext{$1$}

\move(-1.2 -0) \fcir f:0.3 r:0.25

\move(-3.2 -1)  \fcir f:0.3 r:0.05  \move(-3.1 -1.2)
\htext{$\zeta'$} \move(-3.1 -1.8) \htext{$J'$}

\move(-2.4 -1.8) \htext{$a'$}

\move(-3.1 -1.2)
 \move(-2.34 -1.5) \fcir f:0.3 r:0.05

\move(-0.5 -0.2)  \fcir f:0.3 r:0.05  \move(-0.5 -0.15)
\htext{$\zeta$}

\move(-1 0) \lcir r:0.8

\move(0 -1.77) \htext{$a''$}

\move(0 -1.5) \fcir f:0.3 r:0.05

 }
\end{center}
\vspace{0.2cm} \caption{Subcase II of Case II }
\end{figure}
Since $\zeta$ and $\zeta'$ belong to $B_{6\wp}(1)$ and $g_{\theta}$
is like a cubic map near $1$, we have ${\rm dist}(\omega, v) =
O(\wp^3)$.  Let $a$ and $b$ denote the two end points of $J$ and $a$
be the one which is nearer to $v$. Since ${\rm
diam}(H_{\alpha}(J))\asymp |J|$ and $J$ can be arbitrarily small
provided that $n$ is large enough, we have
\begin{equation}\label{sups} {\rm dist}(a, v) =
O(\wp^3).\end{equation}   Since $\wp > 0$ is small, from
(\ref{sups}) the small arc $[a, v]$ can be regarded as a straight
segment.
 Let $a', b'  \in \Bbb T$ be the points such that $g_{\theta}(a') = a$ and $g_{\theta}(b') = b$.
 Then $a'$ is close to $1$. Let $a'' \in \Bbb T$ be a point near $1$ which is on
the other side of $1$.  See Figure 7 for an illustration.

In Figure 7  consider the triangle with $\omega$, $a$ and $v$ being
the three vertices.
 By the geometry of $H_{\alpha}(J)$ (cf. Lemma~\ref{hn-ps}),  it
follows that the angle formed by $[a, v]$ and $[a, \omega]$ is
 greater than $\alpha$.  Thus the angle formed by $[v,
\omega]$ and $[v, a]$ is  less than $\pi - \alpha$.   Note that
${\rm diam}(H_{\alpha}(J)) \asymp |J|$  and by (\ref{sups}) the
points in $H_{\alpha}(J) \cup E$ can be arbitrarily close to $v$
provided that $\wp > 0$ is small enough and $n$ is large enough ($J$
can be arbitrarily small provided that $n$ is large enough). By
assuming $\wp
> 0$ is small enough and $n$ is large enough, we may regard $\Psi$ as  a cubic root map near $v$.
Since  $\Psi$  maps $J$ to $J'$ and $\Psi(\omega) = \zeta'$, it
follows that the angle formed by $[1, \zeta']$ and $[1, a']$ is
approximately less than $(\pi - \alpha)/3$. Since the angle formed
by $[1, \zeta]$ and $[1, \zeta']$ is approximately equal to $2
\pi/3$, the angle formed by $[1, \zeta]$ and $[1, a'']$ is
approximately greater than $ \alpha/3$.

Now let $\Psi_{+}$ and $\Psi_{-}$ be the two branches of the inverse
of $g_{\theta}$ defined right after the proof of Lemma~\ref{cone}.
Without loss of generality, let us assume that $\Psi_{-} = \Psi$.
That is, $\Psi_{-}(\omega) = \zeta'$. Then $\Psi_{+}$ is the branch
such that $\Psi_{+}(\omega) = \zeta$.  Let $S_{\alpha/4}$ denote the
cone spanned at $1$ and outside the unit disk such that the two
exterior angles formed by the two sides of $S_{\alpha/4}$ and $\Bbb
T$ are both equal to $\alpha/4$.    In the last paragraph, by
replacing $\omega$ with an arbitrary point $\xi \in H_{\alpha}(J)$,
it follows  that the angle formed by $[1, \Psi_{+}(\xi)]$ and $[1,
a'']$ is approximately greater than $\alpha/3$ and thus strictly
greater than $\alpha/4$.  This implies
\begin{equation}\label{refee1}
\Psi_{+}(H_{\alpha}(J)) \subset S_{\alpha/4}.\end{equation}

Note that $\Psi_{-} = \Psi$ maps $J$ to $J'$ and thus maps $E$ to a
domain contained in $\Delta$ and attached to $\Bbb T$.  Thus
$\Psi_{+}$ maps $E$ into the outside of the unit disk. From
(\ref{sups}) it follows that $\Psi_{+}(E)$ is contained in an
arbitrarily small neighborhood of $1$ provided that $\wp> 0$ is
small enough and $n$ is large enough. By Lemma~\ref{obv} and by
assuming $\wp > 0$ is small enough and $n$ is large enough, we have
\begin{equation}\label{refee2}
\Psi_{+}(E) \subset  S_{\alpha/4}.\end{equation} Since  $A \subset
H_{\alpha}(J) \cup E$, from (\ref{refee1}) and (\ref{refee2}) we
have
$$\Psi_{+}(A) \subset S_{\alpha/4}.$$   This implies that
\begin{equation}\label{ioo}
{\rm dist}(\Psi_{+}(A), 1) \asymp  {\rm dist}(\Psi_{+}(A), \Bbb T).
\end{equation} From the property (3) above we have  ${\rm dist}(v, A) \succeq
{\rm diam}(A)$. Because $\Psi_{+}$ is like a branch of the cubic
root map near $v$, we have \begin{equation}\label{poo}
 {\rm
dist}(\Psi_{+}(A), 1) \succeq {\rm diam}(\Psi_{+}(A)).
\end{equation} From (\ref{ioo}) and (\ref{poo}) we have $$ {\rm dist}(\Psi_{+}(A), \Bbb T) \succeq {\rm diam}(\Psi_{+}(A)).$$ Since ${\rm diam}(A) \asymp {\rm diam}(B)$, the pair
$(A, B)$ has $M$-bounded geometry for some universal $M
> 1$. From ${\rm dist}(v, A) \succeq {\rm diam}(A)$  and Koebe's
distortion theorem, the distortion of $\Psi_{+}$ is universally
bounded in $A$.  Thus  $(\Psi_{+}(A), \Psi_{+}(B))$ has $K$-bounded
geometry with $K> 1$ being  some universal constant.  Since $B
\subset E \setminus Y_{n+2} \subset Y_n \setminus Y_{n+2}$, we have
$\Psi_{+}(B) \subset X_{n} \setminus X_{n+2}$. Let $U = \Psi_{+}(A)$
and $V = \Psi_{+}(B)$. Then the pair $(U, V)$ is the desired pair.
This proves  Lemma~\ref{pre-lem} in Subcase II of Case II.

 Subcase III. $\omega \in Y_{n+2}$.
 Let $E$ be the cell of level $n$ containing $\omega$. Let $\Psi$ be
 the branch of the inverse of $g_{\theta}$ which maps $\omega$ to $\zeta$.
  Then  $\zeta \in C_{\wp} \subset S_{\pi/4}$ provided that $\wp > 0$ is small enough (cf. Lemma~\ref{obv}).   It follows that $\Psi(E) \subset
 S_{\pi/4}$ for all $n$ large enough.

Since $0< \alpha < \pi/3$, by the geometry of $H_{\alpha}(I)$ (cf.
Lemma~\ref{hn-ps}),  $H_{\alpha}(I_n) \cup H_{\alpha}(I_{n+1})
(\subset Z_n)$ contains the part, which is outside the unit disk, of
a disk centered at $1$ and with radius $\asymp|I_n|$.  Since $\zeta
\notin Z_n$,  we have ${\rm dist}(1, \zeta) \succeq |I_n|\asymp
|I_{n+1}|$. Thus
\begin{equation}\label{cr}{\rm dist}(v, \omega) \succeq |g_\theta
(I_{n+1})| = |[v, x_{q_{n+1}-1}]| \asymp |[x_{q_{n+1}-1},
x_{q_{n}+q_{n+1}-1}]| = |I_{n}^{q_{n+1}-1}|\end{equation} where
$\asymp$ comes from the first assertion of Lemma~\ref{real bound}.
Let $J = \overline{E} \cap \Bbb T$ be the arc interval to which $E$
is attached. Let us prove that
\begin{equation}\label{db} {\rm dist}(v, \omega) \succeq
|J|.\end{equation} In the case that $J$ contains
$I_{n}^{q_{n+1}-1}$, $J$ is the union of $I_{n}^{q_{n+1}-1}$ and one
of its adjacent intervals in the dynamical partition of level $n$
(cf. Lemma~\ref{dyn-cell}). That is,
$$J = I_{n+1}^{q_n -1} \cup I_{n}^{q_{n+1}-1} = [x_{q_{n}-1},
x_{q_{n+1} +q_{n} -1}] \cup [x_{q_{n+1} + q_{n} -1},
x_{q_{n+1}-1}].$$  Since $|I_{n+1}^{q_n -1}| \asymp
|I_{n}^{q_{n+1}-1}|$ by Theorem~\ref{SH}, we get  $|J| \asymp
|I_{n}^{q_{n+1}-1}|$. Then (\ref{db}) follows from (\ref{cr}). In
the other cases,  since $v \in I_{n}^{q_{n+1} -1}$, we have $v
\notin J$. Then (\ref{db}) follows from the second assertion of
Lemma~\ref{real bound}. See Figure 8 for an illustration for the
case that $v \notin J$.
\begin{figure}
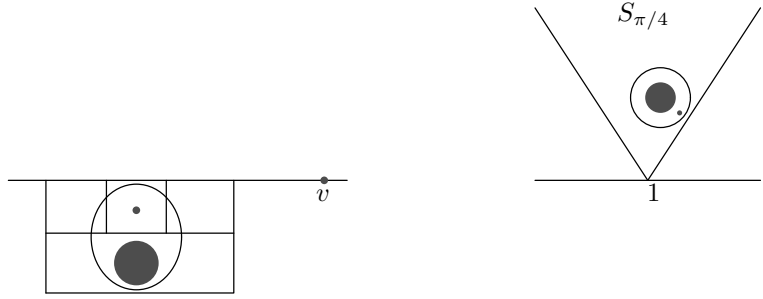

\bigskip
\begin{center}
\centertexdraw { \drawdim cm \linewd 0.02 \move(-2 2)

\move(-8 -1.5)    \lvec(-3.5 -1.5) \move(-7.5 -3)    \lvec(-5 -3)
\lvec(-5 -1.5) \move(-7.5 -3) \lvec(-7.5 -1.5) \move(-6.7 -2.2)
\lvec(-6.7 -1.5) \move(-5.9 -2.2) \lvec(-5.9 -1.5) \move(-7.5
-2.2)\lvec(-5 -2.2)

\move(-6.3 -1.9)  \fcir f:0.3 r:0.05  \move(-3.8 -1.5)  \fcir f:0.3
r:0.05  \move(-3.9 -1.78) \htext{$v$}

\move(-6.3 -2.6)  \fcir f:0.3 r:0.3

\move(-6.3 -2.25)  \lellip  rx:0.6 ry:0.7

\move(-1 -1.5)    \lvec(2 -1.5)  \move(0.5 -1.5) \lvec(-1 0.8)
\move(0.5 -1.5) \lvec(2 0.8)

\move(0.67 -0.4)  \fcir f:0.3 r:0.2  \move(0.67 -0.4)  \lcir r:0.4

\move(0.92 -0.6)  \fcir f:0.3 r:0.03

\move(0.5 -1.78) \htext{$1$}

\move(0.1 0.5) \htext{$S_{\pi/4}$}

}
\end{center}
\vspace{0.2cm} \caption{Subcase III of Case II}
\end{figure}

 Let $B \subset E$ be the Euclidean disk  guaranteed by Lemma~\ref{good
geometry}.  From  Lemma~\ref{good geometry} we have
\begin{equation}\label{crf}
 {\rm dist}(v, B) \ge {\rm dist}(\Bbb T, B) \asymp
|J|. \end{equation} From (\ref{db}), (\ref{crf}) and the fact that
${\rm diam}(E) \asymp |J|$ (cf. Lemma~\ref{good geometry}),  there
is a Jordan domain $A$ contained in $E$ such that
\begin{itemize} \item[i] $\omega \in A$,  \item[ii] $B \subset A$, \item[iii] ${\rm dist}(v, A) \succeq {\rm diam}(A)$. \end{itemize}

Let $U = \Psi(A)$ and $V = \Psi(B)$. Since $\Psi$ is like a branch
of the cubic root map, from (iii) we have
$$
{\rm dist}(1, U) \succeq {\rm diam}(U).
$$
Since $U = \Psi(A) \subset \Psi(E) \subset S_{\pi/4}$, we have ${\rm
dist}(U, \Bbb T)  \succeq {\rm dist}(U, 1)$. Thus we get
\begin{equation}\label{lse}
{\rm dist}(U, \Bbb T) \succeq {\rm diam}(U).
\end{equation}
Since $B \subset A \subset E$ and ${\rm diam}(B) \asymp {\rm
diam}(E)$, the pair $(A, B)$ has $L$-bounded geometry for some
universal  $L > 1$.  From (iii), it follows that the distortion of
$\Psi$ in $A$ is universally bounded. Thus $(U, V)$ has $K$-bounded
geometry for some universal constant $K > 1$.   Since $B \subset Y_n
\setminus Y_{n+2}$, $V  = \Psi(B) \subset X_{n}\setminus X_{n+2}$.
It follows that $(U, V)$ is a desired pair. This proves
Lemma~\ref{pre-lem} in Subcase III of Case II.

The proof of  Lemma~\ref{pre-lem} is completed.

%%%%%%%%%%%%%%%%%%%%%%%%%%%%%%%%%%%%%%%%%%%%%%%%%%%%%%%%%%%%%%%
%%%%%%%%%%%%%%%%%%%%%%%%%%%%%%%%%%%%%%%%%%%%%%%%%%%%%%%%%%%%%%%

\end{document}